\newtheorem{theorem}{Theorem}[section]
\newtheorem{definition}{Definition}[section]
\newtheorem{proposition}{Proposition}[section]
\newtheorem{lemma}{Lemma}[section]
\newtheorem{remark}{Remark}
\newcommand{\R}{\mathbb{R}}
\newcommand{\N}{\mathbb{N}}
\newcommand{\DD}{\mathcal{D}}
\newcommand{\FF}{\mathcal{F}}
\newcommand{\e}{\varepsilon}
\DeclareMathOperator{\sign}{sgn}
\begin{document}

\title{Vanishing viscosity limit of a conservation law regularised by a Riesz-Feller operator}



\author{X. Diez-Izagirre, C.~M. Cuesta \\
  University of the Basque Country UPV/EHU,\\ Department of Mathematics, \\Barrio Sarriena S/N, 48940 Leioa, Spain\\
xuban.diez@ehu.eus, carlotamaria.cuesta@ehu.eus}

\date{}



\maketitle

\begin{abstract}
We study a nonlocal regularisation of a scalar conservation law given by a fractional derivative of order between one and two. The nonlocal operator is of Riesz-Feller type with skewness two minus its order. This equation describes the internal structure of hydraulic jumps in a shallow water model. The main purpose of the paper is the study of the vanishing viscosity limit of the Cauchy problem for this equation. First, we study the properties of the solution of the regularised problem and then we show that solutions converge to the entropy 
solution of the scalar conservation law in this limit in $C([0,T];L^1_{loc}(\R))$ for initial data in $L^\infty(\R)$, and in $C([0,T];L^1(\R))$ for initial data in $ L^\infty(\R)\cap BV(\R)$. In order to prove these results we 
use weak entropy inequalities and the double scale technique of Kruzhkov. Such techniques also allow to show the $L^1(\R)$ contraction of the 
regularised problem. For completeness, we study the behaviour in the tail of travelling wave solutions for genuinely nonlinear fluxes. These waves converge to shock waves in the vanishing viscosity limit, but decay algebraically as $x-ct \to \infty$, rather than exponentially, the latter being a behaviour that they exhibit as $x-ct \to- \infty$, however. Finally, we generalise the results concerning the vanishing viscosity limit to Riesz-Feller operators.


\end{abstract}

\section{Introduction}\label{sec:intro}
In this paper we study the following one-dimensional nonlocal problem
\begin{equation}
\begin{cases}
\partial_t u + \partial_x f(u) = \partial_x \DD^{\alpha}[u], & t > 0, \ x \in \R, \\
u(0,x) = u_0(x), & x \in \R,
\end{cases}
\label{Problem1}
\end{equation}
where $f \in C^{\infty}(\mathbb{R})$. Here the notation $\DD^{\alpha}$ stands for the nonlocal operator, acting only on $x$,
\begin{equation}\label{frac:der}
\DD^{\alpha}[g](x) = d_\alpha \int_{-\infty}^{x}{\frac{g'(z)}{(x-z)^\alpha}dz}, \ \ \ 0<\alpha<1, \quad d_\alpha:=\frac{1}{\Gamma(1-\alpha)},
\end{equation}
which corresponds to a Riesz-Feller differential operator (see Section~\ref{sec:rieszfeller}),
and it can be seen as a left-sided Caputo type fractional derivative of order 
$\alpha$ integrated from $-\infty$. It also has an equivalent formulation, that corresponds to the right Weyl-Marchaud derivative (see \cite{Weyl}, \cite{Marchaud}), as we shall remark in Section~\ref{sec:max:glob}.

The equation in (\ref{Problem1}) is used as a model for the far-field behavior of uni-directional viscoelastic waves \cite{SK1985}, and has been also derived as a model for the internal structure of hydraulic jumps in near-critical single-layer flows \cite{Kluwick2010}. The aim of this paper is to complement the recent studies related to these models, namely \cite{AHS} and \cite{AHS2} (see also \cite{ACH} for a related model) where the existence and stability of travelling waves is studied, and most recently \cite{AU2018}, where stability of travelling waves is stablished with decay rates.

In this manuscript we focus on the vanishing viscosity limit associated to (\ref{Problem1}). 
For that reason we shall introduce a control 
parameter in front of the non-local term, $\varepsilon>0$, and let 
$\varepsilon\to 0^+$ in the one-parameter family of problems 
\begin{equation}
\begin{cases}
\partial_t u^\varepsilon(t,x) + \partial_x f(u^\varepsilon) = \varepsilon \partial_x \DD^{\alpha}[u^\varepsilon], & t > 0, \ x \in \R, \\
u^\varepsilon(0,x) = u_0(x), & x \in \R,
\end{cases}
\label{Problem2}
\end{equation}
with the same initial condition.

For $u_0(x) \in L^\infty(\R)$, we prove that the family of solutions converges to the unique entropy solution of the initial value problem for the corresponding conservation law: 
\begin{equation}
\begin{cases}
\partial_t u(t,x) + \partial_x f(u) = 0, & t > 0, \ x \in \R, \\
u(0,x) = u_0(x), & x \in \R.
\end{cases}
\label{Problem3}
\end{equation}
We recall that, formally, the entropy solution of (\ref{Problem3}) satisfies the entropy inequality
\begin{equation}\label{entropy0}
\partial_t \eta(u(t,x)) + \partial_x q(u(t,x)) \leq 0,
\end{equation}
for every $\eta \in C^2(\R)$ convex and $q$ such that $q'(\xi)=\eta'(\xi)f'(\xi)$ (see e.g. \cite{Serre}).

The paper is organised as follows. Before we prove the vanishing viscosity limit, we give in Section~\ref{sec:pre} some preliminary results. We first prove existence, uniqueness and regularity of 
mild solutions that are related to (\ref{Problem1}) by 
Duhamel's principle. The existence is global in time, this is done by proving 
a maximum principle. The proof of this principle requires an equivalent formulation of the nonlocal 
operator, which is only valid for sufficiently regular functions. 
Before we can conclude this, we give some necessary results, 
these include the precise definition of mild solution and some properties 
of the semigroup kernel generated by the linear part of (\ref{Problem1}) 
(i.e. the initial value problem for the same equation without the nonlinear 
term). Many of the above listed results follow from those in \cite{Droniou1}, except for some proofs where it is more convenient to use Fourier transform
and its properties instead of splitting the nonlocal operator in a convenient way; our nonlocal operator has a complex \textit{Fourier} symbol 
(as it is in general for Riesz-Feller operators), and the operator treated in \cite{Droniou1} has a real one (as for the fractional Laplacian, for instance).

Then in Section~\ref{sec:entropy:l1} we prove a weak entropy inequality and the $L^1$ contraction property for (\ref{Problem1}). In Section~\ref{sec:limit} we prove the vanishing viscosity limit for this problem. Both proofs are based on the doubling variable technique of Kruzhkov \cite{Kruzhkov}. The first is readily adapted from the work \cite{Cifani}, the main difference being that our pseudo-differential operator is not symmetric. The vanishing viscosity limit
follows the suggestions given in \cite{Droniou3} for symmetric operators; although the authors do not prove the limit, they do give indications of the steps to be followed. The limit is proved in \cite{Droniou2} for operators with real Fourier symbol, but our proof differs from this one, in that it does not require a splitting in the time evolution of the problem. Indeed, just from the entropy inequalities and the $L^1$ contraction properly, we obtain similar results. Namely, convergence holds in $C([0,T]; L^1_{loc}(\R))$ for $u_0 \in L^\infty(\R)$. And if, in addition, $u_0 \in L^\infty(\R)\cap BV(\R)$, we obtain convergence in $C([0,T]; L^1(\R))$ with the following estimate:
\begin{equation*}
\|u^\e(t,\cdot)- u(t,\cdot )\|_{1} \leq C\left(\e \, t \right)^{\frac{1}{\alpha+1}}|u_0|_{BV}.
\end{equation*}
Here we include the dependency on $\e$ in the decay rate, as this is valid in the limit $\e\to 0^+$.

In Section~\ref{sec:TW} we analyse the decay to far-field values of travelling wave solutions associated to (\ref{Problem1}). These solutions converge pointwise to a shock wave as $\e\to 0^+$. The existence of these solutions is shown in \cite{AHS}, here we complete the analysis by showing that as $x-ct \to\infty$ the decay to the constant right value is algebraic. 

For completeness, in Section~\ref{sec:rieszfeller} we consider the vanishing viscosity limit associated to the scalar conservation law with more general regularising viscosity:
\begin{equation}
 \begin{cases}
\partial_t u + \partial_x f(u) = D^\beta_\gamma[u], & t > 0, \ x \in \R, \\
u(0,x) = u_0(x) & x \in \R,
\end{cases}
\label{Problem:general} 
\end{equation}
where $\beta\in(1,2]$ and $|\gamma|\leq \min\{\beta,2-\beta\}$, and $D^\beta_\gamma$ is a Riesz-Feller operator of order $\beta$ and skewness $\gamma$, that for these parameters is defined by means of
a Fourier multiplier operator (see e.g. \cite{MLP})
\begin{equation}\label{symbol1:general}
\FF(D^{\beta}_\gamma[ u])(\xi)=\psi_\gamma^\beta(\xi) \, \FF(u)(\xi),
\end{equation}
where the symbol reads
\begin{equation}\label{symbol2:general}
\psi_\gamma^\beta(\xi)=-|\xi|^{\beta}\, \exp\left[ i \, \sign(\xi)\gamma\frac{\pi}{2}\right] \quad \mbox{for} \quad 1<\beta\leq 2, \ |\gamma|\leq \min\{\beta,2-\beta\}. 
\end{equation}
We observe that (\ref{frac:der}) is of this form with $\beta=1+\alpha$ and $\gamma=1-\alpha$ (see (\ref{f:symbol}) below).

Once we have analysed problem (\ref{Problem1}), the generalisation to (\ref{Problem:general}) requires a minimal effort if we use the integral representation of the Riesz-Feller operators (see Proposition~2.3 of \cite{AK2} and e.g. \cite{Sato}). This representation is closely related to (\ref{frac:der}) and its adjoint operator, as we shall see, it is a linear combination of both for smooth enough functions. We have chosen to focus first on the specific example given in (\ref{Problem1}) for two reasons: first, because this is the example that has come to us from applications, and second because the proofs, although analogous, involve shorter formulas.

We conclude this introduction by mentioning that in recent years hyperbolic problems with nonlocal regularizations that generalise the fractional Laplacian has been extensively studied. These include linear (see \cite{Droniou2}) as well as nonlinear regularizations, regularizations of order lower than or equal to one (see e.g. \cite{Droniou3} and \cite{Alibaud2007}) as well as degenerate diffusion ones (see e.g. \cite{EJ2014} and references therein). Although the techniques we use are similar and based on doubling variables, the type of operator, we are interested in, is not included in the classes analysed in this literature. Moreover, in contrast to the results in \cite{Droniou1} and \cite{Droniou2}, we do not require a splitting in the time evolution that alternates solving the conservation law and then the diffusion equation; notice that this method explicitly chooses the entropy solution in the time intervals where the former is solved.

\section{Preliminary results}\label{sec:pre}
In this section, we define an equivalent formulation of (\ref{Problem1}), the mild formulation that is based on Duhamel's principle. 
Then we give some properties of the corresponding kernel (or semigroup) associated to this formulation. Many of the steps in the proofs 
that follow are similar to those in \cite{Droniou1} and in \cite{AHS}. We have proved some properties of the kernel differently and
 we report on them. Finally, we give the existence and uniqueness results for the mild initial value problem and give a global existence 
result. The last step requires to prove a  maximum principle which is based on an equivalent representation of our non-local operator
 applied to smooth enough functions.

 Before we continue let us introduce some notation and give some properties of (\ref{frac:der}) and its derivative with respect to $x$.
 
Notice, that here and throughout we use the notation
$\|\cdot\|_{1}$ for the norm of $L^1(\R)$, $\|\cdot\|_{\infty}$ for the norm of $L^\infty(\R)$, and for functions of bounded variation in $x$, we have
\[
|u|_{BV}:=\sup\left\{\int_\R u(x)\,\phi'(x)\,dx: \ \phi\in C_c^1(\R), \  \|\phi\|_\infty\leq 1 \right\}.
\]
We recall that if $u\in W^{1,1}(\R)$ then $|u|_{BV}=\|u'\|_1$, and if also $u\in C^1(\R)$, then $\int_\R |u(x+h)-u(x)|dx\leq |h| |u|_{BV}$.

We use the following definition and notation for the Fourier transform:
\begin{equation*}
\FF(u(x))(\xi) = \hat{u}(\xi)= \frac{1}{\sqrt{2\pi}} \int_{\R}{u(x) e^{-i\xi x} \ dx}.
\end{equation*}
In order to compute the Fourier transform of $\partial_x \DD^{\alpha}[u]$, we rewrite it as a convolution, 
\begin{equation}\label{op:conv}
\DD^{\alpha}[u](x)= d_\alpha \left( \theta(\cdot) \ (\cdot)^{-\alpha} * u'\right)(x)
\end{equation}
where $\theta$ is the Heaviside function. Then (see e.g. \cite{AA})
\begin{equation}\label{f:symbol}
\FF(\partial_x\DD^{\alpha}[u])(\xi)=(i\xi)^{\alpha+1} \ \FF(u)(\xi).
\end{equation}

It is not hard to see, splitting the integral and using integration by parts in one of the resulting integrals, that the operator (\ref{frac:der}) is bounded from $C_b^1$ to $C_b$ and from $H^{m+\alpha}$ to $H^m$. This type of argument will be used in subsequent proofs to get more precise estimates.

\subsection{Mild solutions}
Let us define mild solutions for (\ref{Problem1}) using Duhamel's principle and Fourier transform.

In view of (\ref{f:symbol}) 
we define the kernel 
\begin{equation}\label{kernel}
K(t,x)=\FF^{-1}\left(e^{(i\xi)^{\alpha+1}t}\right)(x)\quad  \forall t>0, \quad x\in \R
\end{equation}
and formally obtain, by Duhamel's principle, the solution to (\ref{Problem1})
\begin{equation*}
u(t,x)= K(t,\cdot)*u_0(x) - \int_{0}^{t}K(t-s, \cdot)*\partial_xf(u(s,\cdot))(x)\, ds.
\end{equation*}
For convenience, we write the derivative of $f(u)$ in $K$ in the convolution, and we arrive at the following definition of mild solution:
\begin{definition}[Mild solution]\label{solution}
Given $T\in (0,+\infty]$ and $u_0\in L^{\infty}(\R)$, we say that a \textit{mild solution} of (\ref{Problem1}) on $(0,T)\times \R$ is a function 
$u\in L^\infty((0,T)\times\R)$ which satisfies 
\begin{equation}\label{solutioneq}
u(t,x)=K(t,\cdot)*u_0(x) - \int_{0}^{t}\partial_x K(t-s,\cdot)*f(u(s,\cdot))(x)\, ds \quad \mbox{a.e. in} \quad (t,x)\in(0,T)\times\R.
\end{equation}
\end{definition}

Some properties of $K$ have been already proved in \cite{AHS}, here we add two more properties related to its derivatives with respect to $t$.

\begin{proposition}[Properties of the kernel $K$]\label{K:prop}
For $0<\alpha<1$, the kernel $K$ given in (\ref{kernel}) is non-negative. Additionally, $K$ satisfies the properties:
\begin{enumerate}[(i)]
\item Self-similarity: For all $t>0$ and $x\in \R$, $$K(t,x) = \frac{1}{t^{\frac{1}{1+\alpha}}} K\left(1, \frac{x}{t^{\frac{1}{1+\alpha}}}\right).$$

\item Mass conservation: For all $t>0$, $\|K(t, \cdot)\|_{1} = 1.$

\item Semigroup property: $\forall a,b \in (0,\infty)$, 
\[
K(a,\cdot)*K(b,\cdot)=K(a+b,\cdot)
\]
 and 
\[
K(a,\cdot)*\partial_xK(b,\cdot)
=\partial_x\left(K(a+b,\cdot)\right).
\]

\item Space regularity: $K(t,x)\in C^\infty((0,\infty)\times\R)$ and for all $m\geq 0$ there exists a $B_m>0$ such that
\begin{equation*}
 |\partial_x^m K(t,x)| \leq \frac{1}{t^{\frac{1+m}{1+\alpha}}}\frac{B_m}{(1+t^{\frac{-(m+2)}{1+\alpha}}|x|^{m+2})}\quad \mbox{for all}\quad (t,x)\in (0,\infty)\times\R.
\end{equation*}
 In particular, there exist $C_m>0$ such that for all $m\geq 1$ and $t>0$: 
\[
\|\partial_x^m K(t,\cdot)\|_1 = \frac{C_m}{t^{\frac{m}{1+\alpha}}}.
\]

\item Time regularity: $K(t,x) \in C^\infty((0,\infty)\times\R)$ and for all $m \geq 0$
 there exist constants $K_m>0$ such that, for all $(t,x) \in (0,\infty) \times \R$,
\[
 |\partial_t^m K(t,x)| \leq \frac{1}{t^{\frac{m(\alpha+1)+1}{\alpha+1}}} \frac{K_m}{1 + \left(\frac{|x|}{t^{1/(\alpha+1)}}\right)^{2}}.
\]
As a result there exists a $D_m>0$ such that for all $t>0$:
\[
\|\partial_t^m K(t,\cdot)\|_{1} = \frac{D_m}{t^m}.
\]
\end{enumerate}
\end{proposition}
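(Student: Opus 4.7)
The proof tackles the properties in turn, with (i)--(iv) already available from \cite{AHS}; the only new material is the time regularity (v), so I would describe it in detail and sketch the rest only briefly.

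For (i)--(iv), the plan is the standard Fourier route that is used in \cite{AHS}. Self-similarity follows from the change of variable $\xi=\eta\,t^{-1/(\alpha+1)}$ in the Fourier inversion integral defining $K$: the phase $(i\xi)^{\alpha+1}t$ becomes $(i\eta)^{\alpha+1}$ and the prefactor $t^{-1/(\alpha+1)}$ comes out of $d\xi$. Non-negativity follows by identifying $e^{(i\xi)^{\alpha+1}t}$ as the characteristic function of an $(\alpha+1)$-stable law; mass conservation is the statement $\widehat{K}(t,0)=1$; and the semigroup property follows from multiplication of the Fourier symbols. The pointwise space bounds in (iv) are obtained by estimating $\partial_x^m K(1,y)$ and $y^{m+2}\partial_x^m K(1,y)$ via $m+2$ integrations by parts in the Fourier inversion integral, the key point being that $(i\xi)^m$ is smooth and the exponential $|e^{(i\xi)^{\alpha+1}t}|=e^{-ct|\xi|^{\alpha+1}}$ with $c=-\cos((\alpha+1)\pi/2)>0$ (since $\alpha+1\in(1,2)$) dominates all polynomial factors at infinity; the corresponding $L^1$ bound then follows by self-similarity.

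For (v), the plan is to differentiate under the integral sign in the Fourier representation of $K$, which is legitimate thanks to the super-polynomial decay of the symbol, obtaining
\begin{equation*}
\partial_t^m K(t,x)=\frac{1}{\sqrt{2\pi}}\int_\R (i\xi)^{m(\alpha+1)}\,e^{(i\xi)^{\alpha+1}t}\,e^{i\xi x}\,d\xi,
\end{equation*}
and then to perform the scaling $\xi=\eta\,t^{-1/(\alpha+1)}$, which gives
\begin{equation*}
\partial_t^m K(t,x)=\frac{1}{t^{(m(\alpha+1)+1)/(\alpha+1)}}\,\Phi_m\!\left(\frac{x}{t^{1/(\alpha+1)}}\right),\qquad \Phi_m(y):=\partial_t^m K(1,y).
\end{equation*}
Hence the pointwise bound reduces to showing $|\Phi_m(y)|\leq K_m/(1+y^2)$. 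I would prove this by bounding $\Phi_m$ and $y^2\Phi_m$ separately: the boundedness of $\Phi_m$ is immediate from absolute convergence of its defining integral, while for $y^2\Phi_m$ one writes $y^2 e^{i\xi y}=-\partial_\xi^2 e^{i\xi y}$ and integrates by parts twice.

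The main technical obstacle is to prove that $\partial_\xi^2\bigl[(i\xi)^{m(\alpha+1)} e^{(i\xi)^{\alpha+1}}\bigr]$ is integrable on $\R$. At infinity the exponential decay beats the polynomial factors; the delicate point is at $\xi=0$, where the fractional power $(i\xi)^{m(\alpha+1)}$ produces, after two differentiations, a term behaving like $|\xi|^{m(\alpha+1)-2}$. Since $m(\alpha+1)-2>-1$ whenever $m\geq 1$ and $\alpha\in(0,1)$, this singularity is integrable near the origin, so the scheme goes through and yields $|\Phi_m(y)|(1+y^2)\leq K_m$. Finally, the $L^1$ estimate in (v) follows at once from the pointwise bound by the change of variables $y=x\,t^{-1/(\alpha+1)}$: the extra factor $t^{1/(\alpha+1)}$ turns the prefactor $t^{-(m(\alpha+1)+1)/(\alpha+1)}$ into $t^{-m}$, with $D_m:=\int_\R |\Phi_m(y)|\,dy<\infty$.
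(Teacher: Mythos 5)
Your proposal is correct, and for the non-negativity, (i)--(iv) it follows essentially the paper's route (stable-law identification, Fourier scaling, symbol multiplication, and repeated integration by parts in the Fourier inversion integral — the paper organises the $|y|^{-(m+2)}$ decay in (iv) as an induction on $m$, while you integrate by parts $m+2$ times directly, which is only a cosmetic difference). The genuine divergence is in (v). The paper never differentiates the Fourier representation in $t$: it exploits self-similarity to write $\partial_t K=-\frac{1}{\alpha+1}\frac1t\left(K+x\,\partial_xK\right)$ and, by induction, $\partial_t^mK(t,x)=t^{-m}\sum_{j=0}^m C_j\,x^j\,\partial_x^jK(t,x)$, so the time bound falls out of the already-proved space bounds in (iv) via $\frac{y^j}{1+y^{j+2}}\leq\frac{C}{1+y^2}$, with no new singularity analysis needed. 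You instead compute $\partial_t^m\widehat K=(i\xi)^{m(\alpha+1)}e^{(i\xi)^{\alpha+1}t}$, rescale, and prove the $\frac{1}{1+y^2}$ decay of $\Phi_m$ by two integrations by parts, which forces you to check integrability of the fractional-power singularity $|\xi|^{m(\alpha+1)-2}$ at the origin. Your condition $m(\alpha+1)-2>-1$ indeed holds for all $m\geq1$; for $m=0$ (which the statement also covers) no fractional prefactor is present, the worst term is $|\xi|^{\alpha-1}$, which is integrable, so your scheme still applies (and the $m=0$ bound is in any case contained in (iv)); you should state this explicitly. What each approach buys: the paper's argument recycles (iv) and avoids any branch/singularity bookkeeping, but yields the exponent $\frac{m(\alpha+1)+1}{\alpha+1}$ somewhat indirectly through the Euler-type identity; your argument is self-contained for (v), makes the self-similar form $\partial_t^mK(t,x)=t^{-\frac{m(\alpha+1)+1}{\alpha+1}}\Phi_m\!\left(x\,t^{-1/(\alpha+1)}\right)$ explicit, and gives the exact scaling of the $L^1$ norm (hence the equality $\|\partial_t^mK(t,\cdot)\|_1=D_m t^{-m}$) immediately, at the price of the extra analysis of the symbol near $\xi=0$.
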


\begin{proof}
  The non-negativity of the kernel follows from the fact that the kernel is the scaled probability measure of a L\'{e}vy strictly $\alpha+1$-stable distribution (\cite{Sato}), hence it has to be a non-negative function.

The proofs of (i), (ii) and (iii) can be done as in \cite{Droniou1} or in \cite{AHS}. 
Let us prove (iv). By (i) and the change of variables
$y=\frac{x}{t^{\frac{1}{1+\alpha}}}$, we get, for all $m\geq 1$ and all $t>0$, that
\begin{equation}\label{simple:bdd}
\begin{split}
  |\partial_x^m K(t,x)|=\frac{1}{t^{\frac{m+1}{1+\alpha}}}|\partial_y^m K(1,y)| =\frac{1}{\sqrt{2\pi}\,t^{\frac{m+1}{1+\alpha}}}\left| \int_{\R}{(i\xi)^m  e^{(i\xi)^{\alpha+1}} \, e^{I\xi} \ d\xi}\right|\\
  \leq 
\frac{\sqrt{2}}{\sqrt{\pi}\,t^{\frac{m+1}{1+\alpha}}}\int_0^\infty |\xi|^me^{-|\xi|^{\alpha+1}\sin\left(\frac{\alpha \pi}{2}\right)} d\xi= \frac{ C_{\alpha,m}}{t^{\frac{m+1}{1+\alpha}}}<\infty 
\end{split}
\end{equation}
with
\[
C_{\alpha,m} = \frac{\sqrt{2}\,\Gamma\left(\frac{1+m}{1+\alpha}\right)}{\sqrt{\pi}(\alpha+1)\,\sin^{\frac{m+1}{\alpha+1}}\left(\frac{\alpha \pi}{2}\right)},
\]
where we have used the property $\partial_\xi^m\FF(\varphi(x))(\xi)= \FF((i x)^m\varphi(x))(\xi)$ for $m\in \N$ and 
the change of variables $z=\sin\left(\frac{\alpha\pi}{2}\right) \xi^{\alpha+1}$.

Let us finally show that the maximal decay of this $|\partial_x^m K(t,x)|$ is slower than or equal to $O\left((|x|/t^{1/(\alpha+1)})^{-(m+2)}\right)$ as 
$|x|/t^{1/(\alpha+1)}\to\infty$. We do this, using again (i) and the self-similar variable $y$. We observe that $K(1,y)=O\left(\frac{1}{y^2}\right)$
as $y \to\infty$. Indeed, applying integration by parts twice, we have
\begin{equation*}
K(1,y) 
= \frac{1}{\sqrt{2\pi}} \frac{\alpha+1}{y^2} \int_{\R}{\left(\alpha (i\xi)^{\alpha-1} + (\alpha+1) (i\xi)^{2\alpha}\right) 
\ e^{(i\xi)^{\alpha+1}} e^{i y \xi} \ d\xi}.
\end{equation*}
Then, arguing as above, there exist constants $C_1$, $C_2$,  such that
\[
|K(1,y)|\leq\frac{1}{y^2} \left( C_1\,\Gamma\left( \frac{\alpha}{\alpha+1} \right) + C_2 \, \Gamma\left( \frac{2\alpha+1}{\alpha+1} \right) \right).
\]
We now apply induction. Observe that integration by parts of $\partial_y^m K(1,y)$, gives 
\[
|\partial_y^m K(1,y)|\leq \frac{C}{|y|^{m+2}} + \frac{\alpha+1}{\sqrt{2\pi}}\frac{1}{|y|}\left|\int_\R (i\xi)^{m+\alpha} e^{(i\xi)^{\alpha+1}}e^{i\xi}d\xi\right|,
\]
where we have applied the induction hypothesis to the first term. The second term can be integrated by parts $m+1$ times and, as before, changing variables and using the definition of the Gamma function), to get for some constants $A_{k,l}>0$,
\[
|\partial_y^m K(1,y)|\leq \frac{1}{|y|^{m+2}} 
\left(C+ \sum_{\substack{k,l=1\\ k+l=m+1}}^{m+1} A_{k,l}\,\Gamma\left(\frac{(m-k)+(1+l)\alpha}{\alpha+1}\right) \right).
\]
This and (\ref{simple:bdd}), by changing to the original variables, imply (iv). 
In addition, we get that $\partial_x^m K(t,x)$ are continuous on $(0,\infty)\times\R$ for all $m\geq 0$ by continuity under the integral sign.

Let us finally prove (v). We first observe that, using (i),
\[
\partial_t K(t,x)=-\frac{1}{\alpha+1}\frac{1}{t}\left(K(t,x)+x\,\partial_x K(t,x)\right)
\]
and, by induction, we have that there exist positive constants such that
\begin{equation*}
\partial_t^mK(t,x) = \frac{1}{t^m} \sum_{j=0}^{m}{C_j  x^j \ \partial_x^j K(t,x)}, \ \ C_j\in \R \ \text{ for all } 0 \leq j \leq m. 
\end{equation*}
Now, we apply (iii), then for all $m\geq 0$ 
\begin{equation*}
|\partial_t^mK(t,x)| \leq \frac{1}{t^{m+\frac{1}{\alpha+1}}}
 \sum_{j=0}^{m}  \frac{ C_j\left(\frac{|x|}{t^{1/(\alpha+1)}}\right)^j}{\left(1 + \left(\frac{|x|}{t^{1/(\alpha+1)}}\right)^{j+2}\right)} \leq \frac{K_m}{t^{m+\frac{1}{\alpha+1}} \left(1 + \left(\frac{|x|}{t^{1/(\alpha+1)}}\right)^{2}\right)},
\end{equation*} 
for all $(t,x)\in (0,\infty)\times \R$.

Again that $\partial_t^m K(t,x)$ are continuous on $(0,\infty)\times\R$ 
and for all $m\geq 0$ follows by continuity under the integral sign. 
And the $L^1$ norm property is proved using the last inequality.
\end{proof}

The following proposition shows that all the terms in Definition~\ref{solution} are well-defined if $u\in L^\infty((0,T)\times\R)$:
\begin{proposition}\label{welldefined}
Let $T>0$, $u_0\in L^\infty(\R)$ and $T>0$ and $v \in L^\infty((0,T)\times\R)$, then,
\[
u(t,x):=K(t,\cdot)*u_0(x) -\int_{0}^{t}\partial_x K(t-s,\cdot)*v(s,\cdot)(x)\, ds\in C_b((0,T)\times\R).
\]
Moreover, for all $t_0\in (0,T)$, $x\in \R$ and $t\in (0,T-t_0)$, 
\begin{equation*}
u(t_0+t,x)=K(t,\cdot)*u(t_0,\cdot)(x) - \int_{0}^{t}{\partial_x K(t-s,\cdot)*v(t_0+s,\cdot)(x)\ ds}.
\end{equation*}
\end{proposition}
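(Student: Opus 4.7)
The plan is to prove three things in turn: boundedness, continuity, and the shift identity. For boundedness, I would apply Young's convolution inequality to each term. Since $\|K(t,\cdot)\|_1=1$ by Proposition~\ref{K:prop}(ii), the first term satisfies $\|K(t,\cdot)\ast u_0\|_\infty\le\|u_0\|_\infty$. For the Duhamel term, Proposition~\ref{K:prop}(iv) gives $\|\partial_x K(t-s,\cdot)\|_1=C_1(t-s)^{-1/(1+\alpha)}$, which is integrable on $(0,t)$ because $1/(1+\alpha)<1$, and so
\[
\left\|\int_0^t \partial_x K(t-s,\cdot)\ast v(s,\cdot)\,ds\right\|_\infty \le \|v\|_\infty\int_0^t \frac{C_1}{(t-s)^{1/(1+\alpha)}}\,ds
= \frac{C_1(1+\alpha)}{\alpha}\,\|v\|_\infty\,t^{\alpha/(1+\alpha)},
\]
uniform on $(0,T)$. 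This also certifies that the time integral converges as a Bochner/pointwise integral.

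For continuity, the map $(t,x)\mapsto K(t,\cdot)\ast u_0(x)$ is continuous on $(0,T)\times\R$ by dominated convergence: fixing $(t_0,x_0)$, in a neighbourhood the integrand $K(t,x-y)u_0(y)$ is dominated by an $L^1_y$ function using the decay estimate of Proposition~\ref{K:prop}(iv) with $m=0$ uniformly in $(t,x)$, and the integrand is continuous pointwise by the smoothness of $K$. For the Duhamel term, the only delicate point is the endpoint singularity at $s=t$. I would fix $(t_0,x_0)\in(0,T)\times\R$ and, for $\delta>0$, split $\int_0^{t}=\int_0^{t-\delta}+\int_{t-\delta}^{t}$. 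Using the $L^1$ estimate of $\partial_x K$ as above, the tail integral $\int_{t-\delta}^{t}\partial_x K(t-s,\cdot)\ast v(s,\cdot)\,ds$ is $O(\delta^{\alpha/(1+\alpha)})$ uniformly in a neighbourhood of $t_0$. On $[0,t_0-\delta]$ the integrand $\partial_x K(t-s,x-y)v(s,y)$ is dominated by an integrable function using Proposition~\ref{K:prop}(iv) with $m=1$, uniformly in $(t,x)$ in a small neighbourhood of $(t_0,x_0)$, so dominated convergence applies; letting $\delta\to0^+$ gives continuity at $(t_0,x_0)$. Boundedness together with continuity yields $u\in C_b((0,T)\times\R)$.

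For the shift identity, write
\[
u(t_0+t,x)=K(t_0+t,\cdot)\ast u_0(x)-\int_0^{t_0}\partial_x K(t_0+t-s,\cdot)\ast v(s,\cdot)(x)\,ds - \int_{t_0}^{t_0+t}\partial_x K(t_0+t-s,\cdot)\ast v(s,\cdot)(x)\,ds.
\]
The semigroup property, Proposition~\ref{K:prop}(iii), gives $K(t_0+t,\cdot)=K(t,\cdot)\ast K(t_0,\cdot)$ and $\partial_x K(t_0+t-s,\cdot)=K(t,\cdot)\ast\partial_x K(t_0-s,\cdot)$ for $s\in(0,t_0)$. I would then use Fubini's theorem (justified by the triple integrability coming from $\|K(t,\cdot)\|_1=1$, the bound on $\|\partial_x K(t_0-s,\cdot)\|_1$ integrable in $s$, and $v\in L^\infty$) to pull the outer convolution $K(t,\cdot)\ast(\,\cdot\,)$ through the time integral, which identifies the first two terms with $K(t,\cdot)\ast u(t_0,\cdot)(x)$. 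A change of variables $s'=s-t_0$ in the last integral rewrites it as $\int_0^t\partial_x K(t-s',\cdot)\ast v(t_0+s',\cdot)(x)\,ds'$, yielding the claimed identity.

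The main technical obstacle is keeping track of the singular factor $(t-s)^{-1/(1+\alpha)}$ near the endpoint simultaneously with continuity in $(t,x)$: this is handled uniformly by the $\delta$-splitting argument together with the sharp $L^1$ bound in Proposition~\ref{K:prop}(iv). All other steps are routine applications of Young's inequality, Fubini, and dominated convergence based on the kernel estimates already established.
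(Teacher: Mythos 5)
Your argument is correct and follows essentially the route the paper intends when it defers this proof to the adaptation of Droniou--Gallou\"et--Vovelle: Young's inequality with the $L^1$ bounds of Proposition~\ref{K:prop}(ii),(iv) for boundedness, dominated convergence with a $\delta$-splitting near the endpoint singularity for continuity, and the semigroup property plus Fubini (with the change of variables $s'=s-t_0$) for the shift identity. The only cosmetic point is the mismatch between the truncation at $t-\delta$ and the domain $[0,t_0-\delta]$ in the continuity step, which is harmless since the discrepancy is again controlled by the same $O(\delta^{\alpha/(1+\alpha)})$ tail estimate uniformly for $t$ near $t_0$.
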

The proof can be adapted easily to our case from that given in \cite{Droniou1}.

We shall need the following
\begin{lemma}\label{continuityproperties}
  Let $T>0$ and $(t_0,x_0) \in (0,T) \times \R $. If $v \in C_b((0,T)\times \R)$, then
\begin{enumerate}[(i)]
\item For all $s_0>0$, $\lim_{(s,t,x)\to (s_0,t_0,x_0)}K(s,\cdot)*v(t,\cdot)(x)= K(s_0,\cdot)*v(t_0,\cdot)(x_0)$.
\item $\lim_{(s,t,x)\to (0,t_0,x_0)}K(s,\cdot)*v(t,\cdot)(x) = v(t_0,x_0)$.
\end{enumerate}
\end{lemma}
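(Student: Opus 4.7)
The plan is to write each convolution as an integral and apply dominated convergence, adapted to each of the two cases. In both parts the continuity of the integrand in $(s,t,x)$ comes from the smoothness of $K$ established in Proposition~\ref{K:prop}(iv) together with the assumed continuity of $v$; the work is therefore reduced to producing an integrable dominating function.

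For part (i), fix $s_0>0$ and choose $\delta>0$ small enough that $[s_0-\delta,s_0+\delta]\subset (0,\infty)$. For $(s,t,x)$ in a neighborhood of $(s_0,t_0,x_0)$ I would write
\[
K(s,\cdot)*v(t,\cdot)(x) = \int_{\R} K(s,x-y)\,v(t,y)\,dy,
\]
and use Proposition~\ref{K:prop}(iv) with $m=0$ to estimate
\[
|K(s,x-y)| \leq \frac{B_0}{s^{1/(1+\alpha)}}\,\frac{1}{1+s^{-2/(1+\alpha)}|x-y|^2}.
\]
Since $s \mapsto s^{-1/(1+\alpha)}$ is bounded on $[s_0-\delta,s_0+\delta]$ and since, for $|x-x_0|\leq 1$, the rational factor is dominated by a translate of $1/(1+|y|^2/C)$ for some constant $C>0$, I obtain a bound of the form $|K(s,x-y)v(t,y)|\leq \|v\|_\infty\,g(y)$ with $g\in L^1(\R)$ independent of $(s,t,x)$ in a small neighborhood of $(s_0,t_0,x_0)$. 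Pointwise convergence of $K(s,x-y)v(t,y)$ to $K(s_0,x_0-y)v(t_0,y)$ follows from the continuity of $K$ on $(0,\infty)\times\R$ and of $v$, so dominated convergence gives the conclusion.

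For part (ii), the kernel concentrates as $s\to 0^+$, so direct pointwise bounds on $K(s,\cdot)$ are not available, and I would instead exploit self-similarity. Using Proposition~\ref{K:prop}(i) with the substitution $z=(x-y)/s^{1/(1+\alpha)}$, the mass conservation property~(ii) gives
\[
K(s,\cdot)*v(t,\cdot)(x) - v(t_0,x_0) = \int_{\R} K(1,z)\bigl(v(t,x-s^{1/(1+\alpha)}z) - v(t_0,x_0)\bigr)\,dz.
\]
As $(s,t,x)\to (0,t_0,x_0)$, the integrand tends to $0$ for every $z\in\R$ by continuity of $v$, and is dominated by $2\|v\|_\infty|K(1,z)|$, which is integrable by Proposition~\ref{K:prop}(iv). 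Dominated convergence then yields~(ii).

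The only technical point is the uniform domination in part~(i); once the self-similar scaling is performed in part~(ii), both arguments are essentially parallel applications of dominated convergence, and no further obstacle is expected.
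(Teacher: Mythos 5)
Your argument is correct and matches the paper's (unwritten) proof, which the paper describes as an application of the dominated convergence theorem together with Proposition~\ref{K:prop}: part (i) via the pointwise decay bound of Proposition~\ref{K:prop}(iv) with $m=0$, and part (ii) via self-similarity and mass conservation to rewrite the convolution against $K(1,\cdot)$. No gaps; the uniform domination sketch in (i) is the standard estimate and goes through as you indicate.
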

The proof of this lemma  uses the dominated convergence theorem and Proposition~\ref{K:prop}.

We can now show that the operator $K(t,\cdot)\ast u_0$ is a classical solution of the linear part of problem (\ref{Problem1}) :

\begin{proposition}\label{timeregularity1}
If $u_0 \in C_b(\R)$, let $U(t,x):=(K(t,\cdot)*u_0)(x)$ for all $(t,x)\in(0,\infty)\times\R$, then $U \in C^\infty((0,\infty)\times\R)$ and satisfies
\begin{equation}\label{lin:eq}
\partial_t U = \partial_x \DD^\alpha[U]
\end{equation}
with $ \lim_{t\to 0^+}U(0,x)=u_0(x)$ for all $x\in\R$.

If $u_0 \in L^\infty(\R)$, then also $U\in C^\infty((0,\infty)\times\R)$ satisfies (\ref{lin:eq}), but we can only assure that $U(t,\cdot)\to u_0$ as $t\to 0^+$ in $L^1_{loc}(\R)$.
\end{proposition}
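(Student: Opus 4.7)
The plan is to verify the three assertions — $C^\infty$ regularity of $U$, satisfaction of the linear equation, and attainment of the initial datum — in sequence, relying on the kernel estimates in Proposition~\ref{K:prop} and on Lemma~\ref{continuityproperties}.

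First I would establish smoothness of $U(t,x)=\int_\R K(t,x-y)\,u_0(y)\,dy$ by differentiation under the integral sign, iteratively. Fix $0<t_0<T$ and $R>0$. The spatial tail bound in Proposition~\ref{K:prop}(iv) gives, for each $m\geq 0$ and $t\in[t_0,T]$, a bound $|\partial_x^m K(t,x-y)|\leq C_{m,t_0}(1+|x-y|^{m+2})^{-1}$, which is integrable in $y$ uniformly for $x\in[-R,R]$; similarly Proposition~\ref{K:prop}(v) gives $|\partial_t^m K(t,x-y)|\leq C'_{m,t_0}(1+|x-y|^{2})^{-1}$, also integrable in $y$. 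Since $u_0$ is bounded, dominated convergence allows us to take $\partial_x$ and $\partial_t$ inside the integral at every order, yielding $U\in C^\infty((0,\infty)\times\R)$ together with the formulas $\partial_x^k\partial_t^j U=(\partial_x^k\partial_t^j K)*u_0$.

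Next I would check that $U$ satisfies $\partial_t U=\partial_x\DD^\alpha[U]$ by transferring the same identity from $K$. From the definition~(\ref{kernel}), $\widehat{K}(t,\xi)=e^{(i\xi)^{\alpha+1}t}$, so $\partial_t\widehat{K}=(i\xi)^{\alpha+1}\widehat{K}$; by (\ref{f:symbol}) and Fourier inversion this gives the pointwise identity $\partial_t K=\partial_x\DD^\alpha[K]$ on $(0,\infty)\times\R$, where $\DD^\alpha[K]$ is well defined classically via (\ref{frac:der}) thanks to the integrability and decay of $\partial_x K$ in Proposition~\ref{K:prop}(iv). To pass this identity from $K$ to $U$, I would use the first step to write $\partial_t U=(\partial_t K)*u_0$ and then apply Fubini to exchange the convolution in $y$ with the one-sided integral in (\ref{frac:der}) defining $\DD^\alpha$, obtaining $\partial_x\DD^\alpha[U]=(\partial_x\DD^\alpha[K])*u_0$. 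The exchange is absolutely convergent because $\partial_x K(t,\cdot)$ is integrable with polynomial decay of order at least three (Proposition~\ref{K:prop}(iv)) and $u_0\in L^\infty(\R)$, so $\int_\R\int_{-\infty}^x|\partial_x K(t,x-y)|(x-z)^{-\alpha}\,dz\,|u_0(y)|\,dy$ is finite. Combining, $\partial_t U=\partial_x\DD^\alpha[U]$.

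Finally, for the initial data: when $u_0\in C_b(\R)$, I would apply Lemma~\ref{continuityproperties}(ii) with the constant-in-$t$ function $v(t,x)=u_0(x)\in C_b((0,T)\times\R)$, directly yielding $U(t,x)\to u_0(x)$ as $t\to 0^+$ for every $x\in\R$. For $u_0\in L^\infty(\R)$, I would approximate $u_0$ in $L^1([-R,R])$ by $\phi_n\in C_c(\R)$ with $\|\phi_n\|_\infty\leq\|u_0\|_\infty$, apply the continuous case to $\phi_n$, and control the remainder using $\|K(t,\cdot)\|_1=1$ from Proposition~\ref{K:prop}(ii) together with a standard $3\varepsilon$-argument to pass from pointwise convergence on a dense class to $L^1_{loc}$ convergence in the limit. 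The main obstacle in this plan is the Fubini-type exchange of $\DD^\alpha$ with the convolution: it is what makes the equation transfer from $K$ to $U$, and it works only because Proposition~\ref{K:prop}(iv) provides decay of $\partial_x K$ that is strong enough to make the double integral absolutely convergent despite the nonlocal and one-sided character of $\DD^\alpha$.
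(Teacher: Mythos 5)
Your overall strategy (transfer the identity from $K$ to $U=K*u_0$, then use Lemma~\ref{continuityproperties} and an approximation argument for the data) is reasonable, and note that the paper itself does not prove this proposition but delegates it to \cite{AK2} and \cite{Tartar}. However, your justification of the crucial exchange $\partial_x\DD^\alpha[U]=(\partial_x\DD^\alpha[K])*u_0$ has a genuine gap: the absolute convergence you invoke is false for general $u_0\in L^\infty(\R)$. The displayed integral is divergent as written (the factor $|\partial_xK(t,x-y)|$ does not depend on $z$ and $\int_{-\infty}^x(x-z)^{-\alpha}dz=+\infty$), and even in its intended form $\int_\R\int_{-\infty}^x|\partial_zK(t,z-y)|\,(x-z)^{-\alpha}\,|u_0(y)|\,dz\,dy$ it is infinite whenever $u_0$ does not decay: take $|u_0|\equiv 1$; integrating first in $y$ gives $\|\partial_xK(t,\cdot)\|_{1}$, a positive constant independent of $z$, and then the $z$-integral of $(x-z)^{-\alpha}$ over $(-\infty,x)$ diverges, so by Tonelli the double integral of the absolute value is $+\infty$. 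The point is that for merely bounded data the one-sided Caputo-type integral (\ref{frac:der}) applied to $U$ converges only conditionally (it relies on the cancellation coming from $\int_\R\partial_xK(t,\cdot)=0$, or equivalently on an integration by parts in the tail), so Fubini cannot be applied in this form.

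The gap is repairable, but it requires a different representation: since $K(t,\cdot)$ and $U(t,\cdot)$ are in $C_b^2(\R)$ for $t>0$, use the absolutely convergent formula (\ref{integralformula1}) of Lemma~\ref{equiv:repre}. Writing $U(x+z)-U(x)-\partial_xU(x)z=\int_\R\bigl(K(t,x+z-y)-K(t,x-y)-\partial_xK(t,x-y)z\bigr)u_0(y)\,dy$ and splitting $z\in(-1,0)$ (second-order Taylor bound, with the integrable majorant of $\partial_x^2K$ from Proposition~\ref{K:prop}(iv) giving an $O(|z|^{-\alpha})$ integrand after the $y$-integration) from $z\le -1$ (crude bound by $2\|K(t,\cdot)\|_1+|z|\,\|\partial_xK(t,\cdot)\|_1$ times $|z|^{-\alpha-2}$), the double integral in $(z,y)$ \emph{is} absolutely convergent, Fubini applies, and one obtains $\partial_x\DD^\alpha[U]=(\partial_x\DD^\alpha[K])*u_0$; combined with $\partial_tK=\partial_x\DD^\alpha[K]$ this gives (\ref{lin:eq}). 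Two smaller points: for $U\in C^\infty((0,\infty)\times\R)$ you need locally uniform integrable bounds on the \emph{mixed} derivatives $\partial_t^j\partial_x^kK$, which you can get from the identity $\partial_t^mK=t^{-m}\sum_jC_jx^j\partial_x^jK$ used in the proof of Proposition~\ref{K:prop}(v) together with (iv), not just from the bounds on pure derivatives; and in the $L^1_{loc}$ step you must truncate $u_0$ outside a slightly larger interval before applying Young's inequality (Young with $\|K(t,\cdot)\|_1=1$ needs global $L^1$ data), controlling the exterior part by $\int_{|w|\ge 1}K(t,w)\,dw\to0$ as $t\to0^+$, which again follows from Proposition~\ref{K:prop}(iv).
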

For a proof we refer to \cite{AK2}, where the result is proved for general Riesz-Feller operators. The last statement about convergence to the initial condition follows by classical results about smoothing by convolution (see e.g. \cite{Tartar}).

\subsection{Existence and Regularity results}\label{sec:exis}
The proofs of local existence and uniqueness of mild solutions of this section are based on those given in \cite{Droniou1} and use 
Proposition~\ref{K:prop}, we shall not give all the details here. 

\begin{proposition}[Existence, uniqueness and space regularity]\label{local:existence}
Let $u_0 \in L^\infty(\R)$ and $f \in C^\infty(\R)$, and let $\|u_0\|_\infty=R_0$. Then, there exists $T>0$, only depending on $R_0$, 
such that, there exists a unique $u \in C_b^\infty((0,T)\times \R)$, that satisfies Definition~\ref{solution}.

Moreover, for all $m\in\N\cup \{0\}$ there exists a $C_m>0$ depending on $t_0$ and $T$ such that 
$\|\partial_x^m u\|_{C_b((t_0,T)\times\R)}<C_m$, and where $t_0=0$ if $m=0$. 
Also, for every $m\in\N$, $t_0 \in (0,T)$ and $t \in (0, T-t_0)$, it holds
\[
\partial_x^m u(t_0+t, \cdot) = K(t, \cdot)*\partial_x^m u(t_0, \cdot) - \int_{0}^{t}{\partial_x K(t-s, \cdot)*\partial_x^m (f(u(t_0+s, \cdot)))\ ds}.
\]
\end{proposition}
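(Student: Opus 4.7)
The strategy is classical: Banach contraction for local existence in $L^\infty$, followed by an inductive bootstrap to spatial regularity using the shifted mild formulation of Proposition~\ref{welldefined} together with an integration-by-parts trick that keeps exactly one $x$-derivative on $K$.

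\textbf{Step 1 — local existence and uniqueness.} On the closed ball $B := \{v \in L^\infty((0,T)\times\R) : \|v\|_\infty \le 2R_0\}$, I consider the Picard operator
\[
\Phi[v](t,x) := K(t,\cdot)\ast u_0(x) - \int_0^t \partial_x K(t-s,\cdot)\ast f(v(s,\cdot))(x)\,ds.
\]
By Proposition~\ref{K:prop}(ii) and (iv), $\|K(t,\cdot)\|_1 = 1$ and $\|\partial_x K(\tau,\cdot)\|_1 = C_1\,\tau^{-1/(\alpha+1)}$, the latter being integrable in $\tau$ on $(0,T)$ since $1/(\alpha+1)<1$. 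Setting $M = \max_{|\xi|\le 2R_0}|f(\xi)|$ and $L$ the Lipschitz constant of $f$ on $[-2R_0,2R_0]$, I obtain
\[
\|\Phi[v]\|_\infty \le R_0 + C_1 M \tfrac{\alpha+1}{\alpha}\,T^{\alpha/(\alpha+1)},\qquad \|\Phi[v]-\Phi[w]\|_\infty \le C_1 L \tfrac{\alpha+1}{\alpha}\,T^{\alpha/(\alpha+1)}\|v-w\|_\infty,
\]
so for $T = T(R_0)$ small enough, $\Phi$ is a strict self-contraction of $B$; its unique fixed point is the mild solution, and continuity $u \in C_b((0,T)\times\R)$ follows from Proposition~\ref{welldefined}.

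\textbf{Step 2 — space regularity via bootstrap.} For $t_0 \in (0,T)$, the shifted identity from Proposition~\ref{welldefined},
\[
u(t_0+t,x) = K(t,\cdot)\ast u(t_0,\cdot)(x) - \int_0^t \partial_x K(t-s,\cdot)\ast f(u(t_0+s,\cdot))(x)\,ds,
\]
is the starting point. A naive $\partial_x$ in the integral would require $\|\partial_x^2 K(\tau,\cdot)\|_1 \sim \tau^{-2/(\alpha+1)}$, which is \emph{not} integrable near $\tau=0$. I circumvent this by setting up a fixed-point equation for $v = \partial_x u$ itself, obtained by first integrating by parts in the convolution to move the derivative onto $f(u)$ and then differentiating, which yields
\[
v(t_0+t,x) = \partial_x K(t,\cdot)\ast u(t_0,\cdot)(x) - \int_0^t \partial_x K(t-s,\cdot)\ast [f'(u)\,v](t_0+s,\cdot)(x)\,ds.
\]
The only kernel appearing here is $\partial_x K$, whose $L^1$ norm is still integrable in time, so the argument of Step~1 produces a unique $v \in C_b((t_0,T)\times\R)$. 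To identify this $v$ with the classical derivative of $u$, I approximate $u_0$ by $u_0^n \in C_b^\infty$ with $\|u_0^n\|_\infty \le R_0 + 1$ and $u_0^n \to u_0$ in $L^1_{\mathrm{loc}}$; for smooth data the corresponding $u^n$ is classically $C_b^1$ and satisfies the same Picard equation for $\partial_x u^n$, and continuous dependence (both of the original scheme and of the linearised one) then transfers the identification to the limit.

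\textbf{Step 3 — higher derivatives and the explicit formula.} I induct on $m$. Assuming $u \in C_b^{m-1}((t_0,T)\times\R)$, Faà di Bruno's formula gives
\[
\partial_x^m [f(u)] = f'(u)\,\partial_x^m u + P_m\bigl(u,\partial_x u,\ldots,\partial_x^{m-1}u\bigr),
\]
with $P_m$ a polynomial whose coefficients involve derivatives of $f$ at $u$; by the induction hypothesis $P_m \in C_b$. The same contraction argument applied to $w=\partial_x^m u$ in
\[
w(t_0+t,x) = \partial_x K(t,\cdot)\ast \partial_x^{m-1} u(t_0,\cdot)(x) - \int_0^t \partial_x K(t-s,\cdot)\ast [f'(u)\,w + P_m](t_0+s,\cdot)(x)\,ds
\]
produces $\partial_x^m u \in C_b((t_0,T)\times\R)$ together with the stated bound $C_m = C_m(t_0,T)$. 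Undoing the single integration by parts used to set up this equation recovers exactly the claimed identity for $\partial_x^m u(t_0+t,\cdot)$. Joint $C^\infty$ smoothness in $(t,x)$ finally follows from the equation $\partial_t u = \partial_x \DD^\alpha[u] - \partial_x f(u)$: once $u$ is smooth in $x$ the right-hand side is continuous, and differentiating in $t$ iteratively yields all mixed derivatives.

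\textbf{Main obstacle.} The only non-routine point is this integrability deficit: $\|\partial_x^m K(\tau,\cdot)\|_1$ is integrable in $\tau$ on $(0,T)$ only for $m=1$, so the naive bootstrap by repeated differentiation under the integral sign fails for $m \ge 2$. The mechanism that unlocks the induction is to always keep exactly one $x$-derivative on $K$ and absorb all remaining derivatives into the nonlinearity via integration by parts, producing the Picard equation for $w = \partial_x^m u$ displayed above.
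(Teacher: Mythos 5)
Your strategy is essentially the paper's: a contraction argument for the mild formulation, followed by an induction on $m$ in which exactly one $x$-derivative stays on $K$ inside the Duhamel integral and the remaining derivatives are absorbed into the nonlinearity via $\partial_x^m f(u)=f'(u)\partial_x^m u+(\text{lower order})$; the paper's inductive fixed-point map is literally the one you display (with $\partial_x^m K(t,\cdot)\ast u_0$ as forcing instead of your shifted $\partial_x K(t,\cdot)\ast\partial_x^{m-1}u(t_0,\cdot)$). The substantive difference is where you put the derivative bookkeeping, and this is also where your write-up has a technical glitch: in Steps 2--3 you claim that "the argument of Step 1 produces a unique $v\in C_b((t_0,T)\times\R)$", but the forcing term $\partial_x K(t,\cdot)\ast u(t_0,\cdot)$ is only $O\bigl(t^{-1/(\alpha+1)}\bigr)$ as $t\to 0^+$ (at that stage $u(t_0,\cdot)$ is merely bounded, and $\|\partial_x K(t,\cdot)\|_1\sim t^{-1/(\alpha+1)}$), so the affine map does not send $C_b$ into itself near the shifted initial time. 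The standard repair is exactly what the paper builds in from the start: work in the weighted space $E_T=\{v:\ t^{1/(\alpha+1)}\partial_x v\in C_b\}$ (or, equivalently, run your scheme from $t_0/2$ in a weighted-in-time norm and only read off boundedness on $(t_0,T)$); the singular weight is compatible with the contraction because $\int_0^t (t-s)^{-1/(\alpha+1)}s^{-1/(\alpha+1)}\,ds=C\,t^{1-2/(\alpha+1)}$, which after multiplication by $t^{1/(\alpha+1)}$ gives the factor $T^{\alpha/(\alpha+1)}$.

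A second, smaller point: because you first solve a plain $L^\infty$ fixed point and only afterwards solve a separate linear equation for $v$, you need the approximation/continuous-dependence argument to identify $v=\partial_x u$, which you only sketch. The paper sidesteps this entirely by making $\partial_x u$ part of the unknown (the $E_T$ norm controls $u$ and $t^{1/(\alpha+1)}\partial_x u$ simultaneously), so the fixed point comes equipped with its first derivative and the induction then proceeds as in your Step 3. With the weighted norm inserted and the identification spelled out (e.g.\ by showing the Picard iterates of the $v$-scheme are the $x$-derivatives of the Picard iterates of the $u$-scheme and passing to the uniform limit), your argument is correct and coincides in substance with the paper's proof, including the final recovery of the stated identity for $\partial_x^m u(t_0+t,\cdot)$ via Proposition~\ref{welldefined}.
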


\begin{proof}
The proof uses a contraction mapping argument. For a fixed $T >0$, one first defines the following Banach space
\[
E_T=\{v\in C_b((0,T)\times\R): \ \partial_xv \in C((0,T)\times\R) \ \mbox{and} \ t^{\frac{1}{\alpha+1}}\partial_xv \in C_b((0,T)\times\R)\},
\]
endowed with the norm 
\[
\|v\|_{E_T}= \sup_{t\in(0,T)}{\left\{ \|v(t,\cdot)\|_{L^\infty(\R)} + \left\|t^{\frac{1}{\alpha+1}}\partial_x v(t,\cdot) \right\|_{L^\infty(\R)}\right\}},
\]
and, the fix-point map $\Psi_T: C_b((0,T)\times\R) \longrightarrow C_b((0,T)\times\R)$ by means of 
\[
\Psi_T(v)(t,x) = \left(K(t,\cdot)*u_0\right)(x) - \int_{0}^{t} \left( \partial_xK(t-s,\cdot)*f(v(s,\cdot)\right)(x) \,ds.
\]
With the aid of Lemma~\ref{continuityproperties} and Proposition~\ref{timeregularity1}, one can show that $\Psi_T(v)$ belongs to $E_T$ for all $v\in E_T$ and that, in fact, $\Psi_T$ maps $B_T(R)$ into itself for some $R>R_0$,
 where $B_T(R)$ denotes the closed ball in $E_T$ of centre $0$ and radius $R>0$. Finally, one shows that
\begin{equation*}
\left\|\Psi_T(u)-\Psi_T(v)\right\|_{E_T} \leq T^{\frac{\alpha}{\alpha+1}}\,C(R_0)\, \|u - v\|_{E_T} ,\quad u,v\in B_T(R),
\end{equation*}
thus $\Psi_T$ is a contraction in $B_T(R)$ for a small enough $T$. Then, there exists a unique fixed point $u \in B_T(R)$.
This implies in particular, since $\|u\|_{E_T}\leq R$, that $|u(t,x)|\leq R$ for all $(t,x) \in (0,T)\times \R$ and 
\[
\left| \partial_x u(t,x) \right| \leq a^{\frac{-1}{\alpha+1}} R \quad \mbox{for all} \quad (t,x)\in (a,T)\times \R, \quad a\in (0,T).
\]
Observe the last statement for $m=0$ holds from Proposition~\ref{welldefined}. 

The rest of the proof can be done by induction. Indeed, observe that differentiation of (\ref{solutioneq}) gives a fix-point map 
of the form
\[
v \longrightarrow \partial_x^m  K(t,\cdot)* u_0(x) - \int_{0}^{t} 
\Big( \partial_x K(t-s,\cdot)* \Big( g_m(s,\cdot) + f'(u(s,\cdot)) v(s,\cdot) \Big) \Big)(x)\, ds
\]
where $g_m$ is such that $g_m(t,x) + f'(u(x,t))\partial_x^m u = \partial_x^m(f(u))$. This gives the regularity and bounds on the derivatives of $u$. 
Observe that then, for $t_0>0$ and using Proposition~\ref{welldefined}, one can conclude the last statement also by induction 
and the regularity of $u$.
\end{proof}

In the following proposition we state the temporal regularity of the mild solution.
\begin{proposition}\label{time:reg}
Let $u_0\in L^\infty(\R)$ and $T\in (0,\infty]$. If $u$ satisfies Definition~\ref{solution} in $(0,T)\times\R$, 
then $u$ is infinitely differentiable with respect to $t>0$ and $\partial_t u + \partial_x (f(u)) = \partial_x\DD^\alpha[u]$ on $(0,T)\times \R$. Moreover, $\partial_t^m u\in C^{\infty}_b((0,T)\times \R)$ for $m\in\N$.
\end{proposition}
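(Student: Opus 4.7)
The plan is to leverage the spatial regularity already established in Proposition~\ref{local:existence}, according to which $u$ and all its spatial derivatives are bounded and continuous on any strip $(t_0, T) \times \R$ with $t_0 > 0$; only temporal regularity remains. The strategy is first to show that $\partial_t u$ exists and the classical equation $\partial_t u + \partial_x f(u) = \partial_x \DD^\alpha[u]$ holds, and then to obtain all higher time derivatives by induction.

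For the existence of $\partial_t u$, I would fix $t\in(0,T)$ and, for small $h>0$, apply the semigroup representation of Proposition~\ref{welldefined} to write
\[
\frac{u(t+h,x)-u(t,x)}{h} \;=\; \frac{K(h,\cdot)*u(t,\cdot)(x) - u(t,x)}{h} \;-\; \frac{1}{h}\int_0^h \partial_x K(h-s,\cdot)*f(u(t+s,\cdot))(x)\,ds.
\]
For the first quotient, since $u(t,\cdot)\in C_b^\infty(\R)$, Proposition~\ref{timeregularity1} together with the commutation of $\partial_x\DD^\alpha$ with convolution by $K$ shows that $h\mapsto K(h,\cdot)*u(t,\cdot)$ is differentiable on $(0,\infty)$ with derivative $K(h,\cdot)*\partial_x\DD^\alpha[u(t,\cdot)]$, which converges to $\partial_x\DD^\alpha[u(t,\cdot)](x)$ as $h\to 0^+$ by the approximate-identity property of $K$. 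For the second quotient, I would transfer the $x$-derivative off the singular factor using the spatial smoothness of $u$, rewriting the integrand as $K(h-s,\cdot)*\partial_x f(u(t+s,\cdot))(x)$; then, using self-similarity of $K$ from Proposition~\ref{K:prop}(i) together with continuity of $\partial_x f(u(\cdot,\cdot))$ at $(t,x)$, a dominated-convergence argument gives that the time-average converges to $\partial_x f(u(t,\cdot))(x) = f'(u(t,x))\,\partial_x u(t,x)$. This yields $\partial_t u(t,x) = \partial_x\DD^\alpha[u(t,\cdot)](x) - \partial_x f(u(t,\cdot))(x)$, i.e., the PDE in classical form.

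Higher time derivatives come by induction. Setting $w := \partial_t u = \DD^\alpha[\partial_x u] - f'(u)\partial_x u$, both terms are bounded and $C^\infty$ in $x$ on $(t_0,T)\times\R$ thanks to Proposition~\ref{local:existence} and the $C_b^1 \to C_b$ mapping property of $\DD^\alpha$ recorded at the end of Section~\ref{sec:pre}. Hence $w\in C_b((t_0,T)\times\R)$ with $C_b^\infty$ regularity in $x$, and on $(t_0,T)$ it satisfies the linear equation $\partial_t w = \partial_x\DD^\alpha[w] - \partial_x(f'(u)w)$ with smooth bounded coefficient $f'(u)$ and with $w(t_0,\cdot)\in C_b^\infty(\R)$. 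Rewriting this linear problem in its own mild form and repeating the argument of the previous paragraph yields $\partial_t w = \partial_t^2 u$ together with its boundedness and spatial smoothness. Iterating produces $\partial_t^m u\in C_b^\infty((t_0,T)\times\R)$ for every $m\geq 1$, and since $t_0>0$ is arbitrary the conclusion holds on $(0,T)\times\R$.

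The main technical obstacle is the second limit in the difference quotient: Proposition~\ref{K:prop}(iv) gives $\|\partial_x K(\tau,\cdot)\|_1\sim \tau^{-1/(\alpha+1)}$, so the factor $\partial_x K(h-s,\cdot)$ is singular at $s=h$ and forbids naive differentiation under the integral. The decisive move is precisely to shift the $x$-derivative from the singular kernel onto the smooth factor $f(u)$; this is legitimate because the spatial $C^\infty$-regularity established in Proposition~\ref{local:existence} provides bounded $\partial_x f(u)$, and it is this manoeuvre that makes the scheme robust enough to be iterated to produce all higher time derivatives.
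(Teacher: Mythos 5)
Your strategy is the one the paper itself points to (its proof is deferred to Droniou--Gallouet--Vovelle and lists exactly the ingredients you use: the representation of Proposition~\ref{welldefined}, Lemma~\ref{continuityproperties}, and Propositions~\ref{timeregularity1} and \ref{local:existence}), and your treatment of the first time derivative is sound: splitting the forward difference quotient, differentiating $h\mapsto K(h,\cdot)*u(t,\cdot)$ via Proposition~\ref{timeregularity1} together with the commutation $\partial_x\DD^\alpha[K(h,\cdot)*v]=K(h,\cdot)*\partial_x\DD^\alpha[v]$ for bounded smooth $v$, and shifting $\partial_x$ from the singular kernel onto $f(u)$ before letting $h\to0^+$ are all legitimate. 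One small point: as written you only obtain the right derivative; either run the same argument for $h<0$ (expressing $u(t,\cdot)$ in terms of $u(t+h,\cdot)$ via Proposition~\ref{welldefined}) or note that the candidate derivative $\partial_x\DD^\alpha[u]-\partial_x f(u)$ is continuous in $t$, so a continuous right derivative upgrades to genuine differentiability.

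The more substantive issue is that your induction step is circular as phrased: you assert that $w=\partial_t u$ ``satisfies the linear equation $\partial_t w=\partial_x\DD^\alpha[w]-\partial_x(f'(u)w)$'' before $\partial_t w$ is known to exist, and that assertion is precisely the existence of $\partial_t^2u$ that you are trying to prove. The missing step is to first establish the mild identity for $w$,
\begin{equation*}
w(t_0+t,x)=K(t,\cdot)*w(t_0,\cdot)(x)-\int_0^t\partial_xK(t-s,\cdot)*\bigl(f'(u)\,w\bigr)(t_0+s,\cdot)(x)\,ds,
\end{equation*}
for instance by differentiating the identity of Proposition~\ref{welldefined} with respect to the translation parameter $t_0$, which is justified by dominated convergence using $\|\partial_x K(t-s,\cdot)\|_1\leq C(t-s)^{-1/(\alpha+1)}$ together with the boundedness and joint continuity of $f'(u)\partial_t u$ on $(t_0,T)\times\R$ (available from your first step and Proposition~\ref{local:existence}). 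With that identity in hand, rerunning your difference-quotient argument does produce $\partial_t^2u$ with the stated bounds, and the iteration then goes through as you describe; your restriction to strips $(t_0,T)\times\R$ with arbitrary $t_0>0$ is consistent with how the paper's own derivative bounds degenerate as $t\to0^+$.
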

The proof is analogous to that in \cite{Droniou1}, we do not write it here. The proof uses Lemma~\ref{continuityproperties} and propositions~\ref{timeregularity1} and \ref{local:existence}.

\subsection{A maximum principle and global existence}\label{sec:max:glob}
Here we prove the global existence of solutions of (\ref{Problem1}). Instead of using a splitting method, as in \cite{Droniou1}, we show global existence by a
 maximum principle, as pointed out in \cite{Droniou3}. 

In order to show the maximum principle, we first give an equivalent formulation of (\ref{frac:der}). A related result appears in \cite{AK2}, Proposition~2.3, see also Section~\ref{sec:rieszfeller}.
\begin{lemma}[Equivalent representation of $\partial_x \DD^\alpha$ and $\DD^\alpha$]\label{equiv:repre}
If $\alpha \in (0,1)$, then for all $\varphi \in C^2_b(\R)$ and all $x\in \R$,
\begin{equation}
\partial_x \DD^\alpha[\varphi](x) = d_{\alpha+2} \int_{-\infty}^{0} \frac{\varphi(x+z)-\varphi(x)- \varphi'(x)z}{|z|^{\alpha+2}} dz.
\label{integralformula1}
\end{equation}
Moreover, we can also get this integral formula for the operator $\DD^\alpha$ where $\alpha \in (0,1)$. For all $\varphi \in C_b^1(\R)$ and all $x\in \R$,
\begin{equation}
\DD^\alpha [\varphi] (x) = d_{\alpha+1} \int_{-\infty}^{0}\frac{\varphi(x+z)-\varphi(x)}{|z|^{\alpha+1}} dz.
\label{integralformula2}
\end{equation}
\end{lemma}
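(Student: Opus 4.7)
My plan is to establish both identities by careful integration by parts on the truncated interval $[-R,-\delta]$ after the substitution $w=z-x$, taking limits $\delta\to 0^+$ and $R\to\infty$ while subtracting the appropriate Taylor polynomial of $\varphi$ at $x$ to tame the boundary terms at the singular endpoint. I would start with \eqref{integralformula2}: the substitution turns \eqref{frac:der} into $d_\alpha\int_{-\infty}^{0} \varphi'(x+w)|w|^{-\alpha}\,dw$, and on $[-R,-\delta]$ a single integration by parts with primitive $\varphi(x+w)$ produces a boundary term $\varphi(x-\delta)\delta^{-\alpha}-\varphi(x-R)R^{-\alpha}$ together with an integral involving $|w|^{-\alpha-1}$. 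Writing $\varphi(x+w)=\varphi(x)+(\varphi(x+w)-\varphi(x))$ in that integral and using the explicit value $\int_{-R}^{-\delta}|w|^{-\alpha-1}\,dw=\tfrac{1}{\alpha}(\delta^{-\alpha}-R^{-\alpha})$, the divergent $\delta^{-\alpha}$ piece cancels exactly against its counterpart in the boundary term; what survives is $(\varphi(x-\delta)-\varphi(x))\delta^{-\alpha}=O(\delta^{1-\alpha})$, $\varphi(x)R^{-\alpha}$, and an absolutely convergent integral (by $\varphi\in C^1_b$). In the limit, and after identifying the prefactor $-\alpha\,d_\alpha$ with $d_{\alpha+1}$ via the recurrence $\Gamma(1-\alpha)=-\alpha\,\Gamma(-\alpha)$, I obtain \eqref{integralformula2}.

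For \eqref{integralformula1} I would build on \eqref{integralformula2}. Because $\varphi\in C^2_b$, the integrand $(\varphi'(x+w)-\varphi'(x))|w|^{-\alpha-1}$ is dominated, uniformly in $x$, by $\min(\|\varphi''\|_\infty|w|,2\|\varphi'\|_\infty)|w|^{-\alpha-1}$, which is integrable on $(-\infty,0)$, so differentiation under the integral is legal and yields
\[
\partial_x\DD^\alpha[\varphi](x)=d_{\alpha+1}\int_{-\infty}^{0}\frac{\varphi'(x+w)-\varphi'(x)}{|w|^{\alpha+1}}\,dw.
\]
A second integration by parts on $[-R,-\delta]$, this time with primitive $\varphi(x+w)-\varphi(x)-\varphi'(x)w$ (the unique linear shift of $\varphi(x+w)$ that forces the boundary contribution at $-\delta$ to vanish), produces $O(\delta^{1-\alpha})$ at $-\delta$ via the $C^2$ Taylor remainder $\varphi(x-\delta)-\varphi(x)+\varphi'(x)\delta=O(\delta^2)$, and $O(R^{-\alpha})$ at $-R$ from the linear-in-$w$ piece of the primitive against the weight $R^{-\alpha-1}$. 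Passing to the limit and using the Gamma recurrence a second time, $-(\alpha+1)\,d_{\alpha+1}=\alpha(\alpha+1)\,d_\alpha=d_{\alpha+2}$, gives \eqref{integralformula1}.

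The only delicate point is the bookkeeping of the boundary contributions at $-\delta$: naive integration by parts produces a term that diverges as $\delta\to 0^+$, and the primitive of $\varphi'(x+w)$ (respectively $\varphi'(x+w)-\varphi'(x)$) must be shifted by exactly the right Taylor polynomial of $\varphi$ at $x$, namely by $\varphi(x)$ (respectively $\varphi(x)+\varphi'(x)w$), so that the surviving boundary term has order $O(\delta^{1-\alpha})$ rather than a divergent one. Viewed differently, \eqref{integralformula1} and \eqref{integralformula2} are Hadamard finite-part regularisations of the formally divergent expressions produced by naively handling \eqref{frac:der}, and the computation above verifies the consistency of this regularisation with the original definition.
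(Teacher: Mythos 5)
Your proof is correct, but it runs in the opposite direction to the paper's. The paper starts from the singular integrals on the right-hand sides of \eqref{integralformula1}--\eqref{integralformula2}: it writes the increments via the Fundamental Theorem of Calculus, pulls a $\partial_x$ out of the integral, swaps the order of integration (Fubini) and integrates the kernel explicitly, landing back on the Caputo-type definition \eqref{frac:der} with the same Gamma-function identities $-\alpha d_\alpha=d_{\alpha+1}$, $\alpha(\alpha+1)d_\alpha=d_{\alpha+2}$; it proves \eqref{integralformula1} first and treats \eqref{integralformula2} as analogous. You instead start from the definition, truncate to $[-R,-\delta]$, integrate by parts with the Taylor-corrected primitives $\varphi(x+w)-\varphi(x)$ and $\varphi(x+w)-\varphi(x)-\varphi'(x)w$ so that the $\delta^{-\alpha}$ and $\delta^{-\alpha-1}$ boundary contributions reduce to $O(\delta^{1-\alpha})$, and pass to the limit; you prove \eqref{integralformula2} first and obtain \eqref{integralformula1} from it by differentiating under the integral sign (with the correct uniform dominating function $\min(\|\varphi''\|_\infty|w|,2\|\varphi'\|_\infty)|w|^{-\alpha-1}$) followed by a second integration by parts. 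Both routes hinge on the same two Gamma recurrences, which you apply correctly. What your version buys is an explicit verification that the improper integral defining $\DD^\alpha[\varphi]$ for merely bounded $\varphi'$ converges and that $\DD^\alpha[\varphi]$ is differentiable with the stated derivative — points the paper's proof takes for granted when it asserts well-definedness of the left-hand sides; what the paper's version buys is brevity, since working from the absolutely convergent singular integrals and using Fubini avoids all bookkeeping of divergent boundary terms.
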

\begin{remark}
  We observe that the representation (\ref{integralformula2}) corresponds to minus the Weyl-Marchaud right derivative of order $\alpha$, after the change of variables $z\to -z$, see \cite{Weyl} and \cite{Marchaud}.  
 \end{remark}
\begin{proof}
First we observe that the assumption on $\varphi$ and that $\alpha\in(0,1)$ imply that the expressions on the left-hand side 
of (\ref{integralformula1}) and of (\ref{integralformula2}) are well-defined.

We can now manipulate these integrals. In order to obtain (\ref{integralformula1}), we apply the Fundamental Theorem of Calculus twice and 
interchanging a derivative with the integrals:
\begin{align*}
\int_{-\infty}^{0} & \frac{ \varphi(x+z)-\varphi(x)- \varphi'(x) z}{ |z|^{\alpha+2} }\, dz = 
\int_{-\infty}^{0} \frac{ \int_{0}^{z} \varphi'(x+y)\, dy - \varphi'(x)z }{ (-z)^{\alpha+2} } \, dz \\
& \qquad = \partial_x \int_{-\infty}^{0} \int_{0}^{z} \frac{ \varphi(x+y)- \varphi(x) }{ (-z)^{\alpha+2} } \, dy \,dz
= \partial_x \int_{-\infty}^{0} \int_{0}^{z} \int_{0}^{y} \frac{\varphi'(x+r)}{(-z)^{\alpha+2}}\, dr\,dy \,dz\\
& \qquad = \partial_x \int_{-\infty}^{0}{\int_{-\infty}^{y}{\int_{0}^{y}{\frac{\varphi'(x+r)}{(-z)^{\alpha+2}}\ dr}dz}dy} 
= \frac{1}{\alpha+1} \partial_x \int_{-\infty}^{0}{\int_{0}^{y}{\frac{\varphi'(x+r)}{(-y)^{\alpha+1}}\ dr}dy}
\\
& \qquad = \frac{1}{\alpha+1} \partial_x \int^{0}_{-\infty}{\int_{-\infty}^{r}{\frac{\varphi'(x+r)}{(-y)^{\alpha+1}}\ dy}dr}
= \frac{1}{(\alpha+1)\alpha} \partial_x \int_{-\infty}^{0}{\frac{\varphi'(x+r)}{(-r)^{\alpha}}\ dr}.
\end{align*}
Observe that, by the properties of the Gamma function, $\alpha(\alpha+1)d_\alpha=d_{\alpha+2}$, then (\ref{integralformula1}) follows. Applying similar manipulations one obtains (\ref{integralformula2}) (for this case $-\alpha d_\alpha=d_{\alpha+1}<0$). 
\end{proof}

From (\ref{integralformula1}) and (\ref{integralformula2}), it is obvious that if $\varphi$ attains its global maximum at $x$, then we have
$\partial_x\DD^\alpha(\varphi)(x) \leq 0$ and $\DD^\alpha(\varphi)(x) \leq 0$, with the identity holding if $\varphi$ is constant.
 And from this property we formulate the following lemma, that can be proved as in \cite{Droniou3} using the continuity of the nonlocal operators.

\begin{lemma}[A maximum principle]\label{negativity}
Let $\alpha \in (0,1)$ and $\varphi \in C_b^2(\R)$. If $\{x_n\}_{n = 1}^\infty\subset \R$ such that $\varphi(x_n) \to \sup_{\R}{\varphi(x)}$ as 
$n\to \infty$, then $\lim_{n\to \infty}{\varphi'(x_n)}=0$ and 
\begin{equation}\label{max:prin:1}
{\lim{\sup}_{n\to \infty}{\partial_x \DD^\alpha (\varphi)(x_n)}} \leq 0.
\end{equation}
\end{lemma}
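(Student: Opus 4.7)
The plan is to deduce both assertions from second-order Taylor expansions of $\varphi$---legitimate since $M := \|\varphi''\|_\infty < \infty$---combined with the integral representation (\ref{integralformula1}) established in Lemma~\ref{equiv:repre}. Write $S := \sup_\R \varphi$, finite because $\varphi$ is bounded, and $\varepsilon_n := S - \varphi(x_n)$, so $\varepsilon_n \to 0^+$ by hypothesis.

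For the convergence $\varphi'(x_n) \to 0$ I would argue by contradiction. If, along some subsequence (still denoted $x_n$), one had $|\varphi'(x_n)| \geq \delta > 0$, then the Taylor inequality $\varphi(x_n + h) \geq \varphi(x_n) + \varphi'(x_n) h - \frac{M}{2} h^2$ evaluated at $h = \varphi'(x_n)/M$ yields $\varphi(x_n+h) - \varphi(x_n) \geq \frac{\delta^2}{2M}$. Since the left-hand side is bounded above by $\varepsilon_n \to 0$, this is a contradiction.

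For the limsup inequality I would substitute (\ref{integralformula1}) and split the integral at a parameter $R > 0$, to be sent to $0^+$ at the end. On the near piece $(-R, 0)$, a second-order Taylor estimate gives $|\varphi(x_n+z) - \varphi(x_n) - \varphi'(x_n) z| \leq \frac{M}{2} z^2$, which upon dividing by $|z|^{\alpha+2}$ is integrable precisely because $\alpha < 1$; the contribution is bounded by $\frac{d_{\alpha+2} M}{2(1-\alpha)} R^{1-\alpha}$, uniformly in $n$. On the far piece $(-\infty, -R)$, the crude bounds $\varphi(x_n + z) - \varphi(x_n) \leq \varepsilon_n$ and $-\varphi'(x_n) z \leq |\varphi'(x_n)||z|$ yield a contribution bounded by
$$
\frac{d_{\alpha+2}\, \varepsilon_n}{(\alpha+1) R^{\alpha+1}} + \frac{d_{\alpha+2}\, |\varphi'(x_n)|}{\alpha R^\alpha},
$$
which tends to $0$ as $n \to \infty$ for each fixed $R$, by the first part. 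Taking $\limsup_n$ first and then $R \to 0^+$ produces (\ref{max:prin:1}).

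The main obstacle is the ordering of the limits: one must send $n \to \infty$ first, with $R$ fixed, and only then let $R \to 0^+$. Both pieces of the splitting are genuinely needed---the singular kernel $|z|^{-(\alpha+2)}$ forces the use of the Taylor remainder near $z = 0$ (the bound $\varepsilon_n$ alone being non-integrable there), while the infinite tail cannot be controlled by Taylor (whose remainder grows like $z^2$) and must instead be tamed by the flatness near the supremum together with $\varphi'(x_n) \to 0$ from step one.
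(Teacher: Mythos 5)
Your proof is correct and is essentially the argument the paper has in mind (and delegates to \cite{Droniou3}): the representation (\ref{integralformula1}) plus a splitting of the singular integral at $|z|=R$, with the near part controlled by the second-order Taylor bound $\tfrac{M}{2}z^2$ and the far part by the flatness $\varphi(x_n+z)-\varphi(x_n)\le S-\varphi(x_n)\to 0$ together with $\varphi'(x_n)\to 0$, sending $n\to\infty$ before $R\to 0^+$. The only (trivial) point to add is the degenerate case $M=\|\varphi''\|_\infty=0$, where your choice $h=\varphi'(x_n)/M$ is undefined; there $\varphi$ is affine and bounded, hence constant, and both conclusions are immediate.
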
 


In the following proposition we give the global existence:
\begin{proposition}[Global existence]\label{global:existence}
Let $\alpha\in(0,1)$, $T>0$. 
If $u\in C_b^2((0,T)\times\R)$ satisfies (\ref{Problem1}), 
then, we have, for all $0<t'<t<T$,
\begin{equation*}
\|u(t,\cdot)\|_{\infty} \leq \|u(t',\cdot)\|_{\infty}.
\end{equation*}
Moreover, if $u$ is a solution as constructed in Theorem~\ref{local:existence}, then
\begin{equation*}
\|u(t,\cdot)\|_{\infty} \leq \|u_0\|_{\infty} \quad \mbox{ for all } \ t\in (0,T)
\end{equation*}
and the solution can be extended globally in time.
\end{proposition}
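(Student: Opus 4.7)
The plan is as follows. By applying the argument to $\tilde u(t,x) := -u(t,x)$, which solves the same form of equation with flux $\tilde f(\xi) := -f(-\xi) \in C^\infty(\R)$, it suffices to show that $S(s) := \sup_{x\in\R} u(s,x)$ is non-increasing on $(0,T)$; combining the bounds for $u$ and $-u$ then gives the $L^\infty$ monotonicity.

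I would argue by contradiction. Assume there exist $t_0 < t_1$ in $(0,T)$ with $S(t_1) > S(t_0) + \delta$ for some $\delta > 0$. Pick $\mu \in (0,\delta/(t_1-t_0))$ and set $w(s,x) := u(s,x) - \mu(s-t_0)$, so that $\sup_x w(t_1,\cdot) > \sup_x w(t_0,\cdot)$. Since $u \in C_b^2$, the function $s \mapsto \sup_x w(s,\cdot)$ is continuous on $[t_0,t_1]$ and attains its maximum at some $s^* \in (t_0,t_1]$. Take a maximizing sequence $\{x_n\} \subset \R$ with $w(s^*,x_n) \to \sup_x w(s^*,\cdot)$. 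Since $w(s^*,\cdot)$ and $u(s^*,\cdot)$ differ by a constant in $x$, $\partial_x w(s^*,\cdot) = \partial_x u(s^*,\cdot)$ and $\partial_x \DD^\alpha[w(s^*,\cdot)] = \partial_x \DD^\alpha[u(s^*,\cdot)]$, so Lemma~\ref{negativity} yields $\partial_x u(s^*,x_n) \to 0$ and $\limsup_n \partial_x \DD^\alpha[u(s^*,\cdot)](x_n) \leq 0$. From the PDE satisfied by $u$,
\[
\partial_t w(s^*,x_n) = \partial_x \DD^\alpha[u(s^*,\cdot)](x_n) - f'(u(s^*,x_n))\,\partial_x u(s^*,x_n) - \mu,
\]
so $\limsup_n \partial_t w(s^*,x_n) \leq -\mu$.

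Since $\|\partial_t^2 u\|_\infty < \infty$, second-order Taylor expansion in time gives $w(s^*-h,x_n) = w(s^*,x_n) - h\,\partial_t w(s^*,x_n) + r_n(h)$ with $|r_n(h)| \leq C h^2$ uniformly in $n$, for all $h \in (0,s^*-t_0]$. Choosing $n$ large enough that $\partial_t w(s^*,x_n) < -\mu/2$,
\[
\sup_x w(s^*-h,\cdot) \;\geq\; w(s^*-h,x_n) \;\geq\; w(s^*,x_n) + \tfrac{h\mu}{2} - Ch^2,
\]
and letting $n \to \infty$ gives $\sup_x w(s^*-h,\cdot) \geq \sup_x w(s^*,\cdot) + \tfrac{h\mu}{2} - Ch^2$, which for $h$ small contradicts the maximality of $s^*$. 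The main obstacle is precisely that $u(s,\cdot)$ need not attain its supremum on $\R$: Lemma~\ref{negativity} is the tool that allows the maximum-principle reasoning to proceed along maximizing sequences, and the backward Taylor step converts the asymptotic information on $\partial_t w$ into a genuine contradiction.

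For the second claim, I would control the behaviour of $\|u(t,\cdot)\|_\infty$ as $t \to 0^+$ via the mild representation (\ref{solutioneq}): non-negativity and mass conservation of $K$ (Proposition~\ref{K:prop}(ii)) give $\|K(t,\cdot) \ast u_0\|_\infty \leq \|u_0\|_\infty$, and by Proposition~\ref{K:prop}(iv), $\|\partial_x K(t-s,\cdot)\|_1 = C_1(t-s)^{-1/(1+\alpha)}$ is integrable on $(0,t)$, so the Duhamel integral term is bounded by $C'\,t^{\alpha/(1+\alpha)} \to 0$ as $t \to 0^+$ (with $C'$ depending only on the a priori bound $R$ from Proposition~\ref{local:existence} and $\|f\|_{L^\infty([-R,R])}$). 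Hence $\limsup_{t\to 0^+}\|u(t,\cdot)\|_\infty \leq \|u_0\|_\infty$, and combining this with the monotonicity established above gives $\|u(t,\cdot)\|_\infty \leq \|u_0\|_\infty$ on $(0,T)$. Since the local existence time in Proposition~\ref{local:existence} depends only on $\|u_0\|_\infty$, restarting the problem at $u(T',\cdot)$ for $T'$ close to $T$ produces an extension with the same $L^\infty$ bound, and iteration extends the solution globally in time.
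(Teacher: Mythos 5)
Your proposal is correct and follows essentially the same route as the paper: both rest on Lemma~\ref{negativity} applied along a maximizing sequence, a second-order Taylor expansion in time using the boundedness of $\partial_t^2 u$, the reduction to $-u$ with flux $-f(-\cdot)$, the mild-formulation estimate giving $\limsup_{t\to 0^+}\|u(t,\cdot)\|_\infty\leq\|u_0\|_\infty$, and continuation using that the local existence time depends only on $\|u_0\|_\infty$. The only (immaterial) difference is how the infinitesimal information is turned into monotonicity of $\sup_x u(t,\cdot)$: the paper integrates the inequality $\sup_x u(t,\cdot)\leq\sup_x u(t-\tau,\cdot)+C\tau^2$ using that this supremum is Lipschitz in $t$, whereas you argue by contradiction with the drifted function $u-\mu(s-t_0)$ at a time where its supremum in $x$ is maximal.
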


\begin{proof}
Let $\delta\in (0,T)$. Since, $|\partial_t^2 u|$ is bounded on $(\frac{\delta}{2},T)\times \R$ by some $C_\delta$, we have, 
by performing a Taylor expansion and using the equation, that for all $t\in (\delta,T)$, all $0<\tau<\frac{\delta}{2}$ and all $x\in \R$, 
\begin{equation}
\begin{split}
u(t,x) &\leq u(t-\tau,x) + \tau \partial_t u(t,x) + C_\delta \tau^2 \\
&\leq \sup_{x\in \R}{u(t-\tau,x)} - \tau f'(u(t,x))\partial_xu(t,x) + \tau \partial_x \DD^\alpha [u(t,\cdot)](x) + C_\delta \tau^2.
\label{inequality28}
\end{split}
\end{equation}

For a $t\in(\delta,T)$ let $\{x_n\}_{n\in \N} \in \R$ be a sequence such that 
$u(t,x_n) \to \sup_{x\in \R}{u(t,\cdot)} $ and let $M_t= \sup_{x\in\R} |f'(u(t,x))|$.
Then, by (\ref{inequality28}), we obtain for all $0<\tau<\frac{\delta}{2}$,
\[
u(t,x_n) \leq \sup_{x\in \R}{u(t-\tau,\cdot)} + \tau M_t |\partial_x u(t,x_n)| + \tau \partial_x \DD^\alpha [u(t,\cdot)](x_n) + C_\delta\tau^2,
\]
and Lemma~\ref{negativity} implies, taking the limit $n\to\infty$, that
\[
\sup_{x\in\R}{u(t,x)} \leq \sup_{x\in \R}{u(t-\tau,x)} + C_\delta \tau^2.
\]
This also implies that
\begin{equation}\label{max:decreasing}
\max\{\sup_{x\in\R}{u(t,x)},0\} \leq \max\{\sup_{x\in \R}{u(t-\tau,x)} ,0\}+ C_\delta\tau^2.
\end{equation}
We observe that $\max\{\sup_{x\in\R}{u(t,x)},0\} \in W^{1,\infty}(\delta,T)$, because it is Lipschitz continuous in $(\delta,T)$. Indeed,
\[
\begin{split}
|\max\{\sup_{x\in\R}{u(t,x)},0\} -\max\{\sup_{x\in\R}{u(t',x)},0\} |\leq \max \{ |\sup_x u(t,x)-\sup_x u(t',x) | ,0 \}\\
 \leq \sup_x |u(t,x)-u(t',x)|\leq \sup_{(t,x)} |\partial_t u(t,x)| \,|t-t'|
\end{split}
\] but $|\partial_t u|$ is bounded on $(\delta,T)\times\R$. In particular, (\ref{max:decreasing}) implies that $\max\{0,\sup_x u(t,x)\}$ decreases, so for all $0<t'<t<T$, 
$$
\max\{0,\sup_x u(t,x)\} -\max\{0,\sup_x u(t',x)\} \leq 0.
$$

The same reasoning applied to $v=-u$, which is a solution of
\[
\partial_t v +\partial_x g(v)=\partial_x\DD^\alpha[v] \quad \mbox{with} \quad g(v)=-f(-v), 
\]
gives that for all $0<t'<t<T$, $\max\{0,\sup_x(-u(t,x))\} -\max\{0,\sup_x(-u(t',x))\} \leq 0$, and we conclude the proof of the first statement.

It remains to prove the last statement by taking the limit of $t'\to 0^+$. This follows from Definition~\ref{solution} and 
Proposition~\ref{K:prop}, since for all $t'>0$
\[
\|u(t',\cdot) \|_{\infty} \leq \|u_0 \|_{\infty} + C(t')^{\frac{\alpha}{\alpha+1}} \sup_{t\in(0,t')}\|u(t,\cdot)\|_\infty 
\]
thus $\lim \sup_{t'\to 0^+} \|u(t',\cdot) \|_{\infty} \leq \|u_0 \|_{\infty}$.
\end{proof}

\section{Entropy inequalities and $L^1$ contraction}\label{sec:entropy:l1}
In the limit $\e \to 0^+$ we expect to recover the entropy solution of (\ref{Problem3}), that is the solution that satisfies the 
entropy inequality (\ref{entropy0}). Observe that, formally, multiplying the equation in 
(\ref{Problem2}) by $\eta'(u)$, for some convex $\eta \in C^2(\R)$, we get:
\begin{equation}\label{entropyinequality}
  \partial_t \eta(u^\e(t,x)) + \partial_x q(u^\e(t,x))
  = \e \eta'(u^\e) \partial_x \DD^\alpha[u^\e](x)
\end{equation}
 where $q$ is such that $q'(u)= \eta'(u) f'(u)$. Let us prove a weak version of (\ref{entropyinequality}). We first need the following lemma:
\begin{lemma}\label{integrationbyparts}
Let $\alpha\in(0,1)$, $u\in C^2_b((0,\infty)\times\R)$ and $\varphi\in C^\infty_c(\R)$. Then,
\begin{equation*}
\int_{0}^{\infty}\int_{\R} \varphi(x) \partial_x \DD^\alpha[u(t,\cdot)](x) \, dx\, dt =
 \int_{0}^{\infty}\int_{\R} \partial_x \overline{\DD^\alpha}[\varphi](x) u(t,x) \, dx\,dt,
\end{equation*}
where $\overline{\DD^\alpha}$ is defined by means of
\[
\overline{\DD^\alpha}[g](x)= d_\alpha \int_{x}^{\infty} \frac{g'(z)}{(z-x)^\alpha}\, dz.
\]
Moreover, for $g\in C^2_b(\R)$,
\begin{align*}
\overline{\DD^\alpha}[g](x) & = - d_{\alpha+1} \int_{0}^{\infty} \frac{g(x+z)-g(x)}{|z|^{\alpha+1}}\ dz,
	\\
\partial_x \overline{\DD^\alpha}[g](x) & = d_{\alpha+2} \int_{0}^{\infty} \frac{g(x+z)-g(x)- g'(x)z}{|z|^{\alpha+2}}\ dz.
\end{align*}

\end{lemma}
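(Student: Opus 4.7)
My plan is to first establish the two explicit formulas for $\overline{\DD^\alpha}$ and $\partial_x \overline{\DD^\alpha}$, and then use them together with Lemma~\ref{equiv:repre} to obtain the integration by parts identity.

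For the two representation formulas, rather than redoing the long chain of manipulations in the proof of Lemma~\ref{equiv:repre}, I would exploit a simple reflection identity. Setting $\tilde g(y):=g(-y)$, the change of variable $z\mapsto -z$ in the definition of $\overline{\DD^\alpha}$ yields $\overline{\DD^\alpha}[g](x) = -\DD^\alpha[\tilde g](-x)$, and differentiating gives $\partial_x \overline{\DD^\alpha}[g](x) = \partial_y\DD^\alpha[\tilde g](y)\big|_{y=-x}$. Substituting the expressions (\ref{integralformula1}) and (\ref{integralformula2}) from Lemma~\ref{equiv:repre} applied to $\tilde g$, and then changing variable $z\mapsto -z$ inside those integrals, produces the stated formulas on $(0,\infty)$.

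For the integration by parts identity, I would use the representation (\ref{integralformula1}) to recast $\int_\R \varphi(x)\,\partial_x\DD^\alpha[u(t,\cdot)](x)\,dx$ as a double integral over $\R \times (-\infty,0)$. Since $u \in C_b^2$, Taylor's theorem gives the two-sided bound
\begin{equation*}
|u(t,x+z) - u(t,x) - z\,\partial_x u(t,x)| \leq C \min\{ z^2,\ 1 + |z| \},
\end{equation*}
which, combined with the compact support of $\varphi$, renders the iterated integrand absolutely integrable: the $z^2$ bound absorbs the $|z|^{-\alpha-2}$ singularity at the origin, and the linear-growth bound yields integrable $|z|^{-\alpha-1}$ decay as $z\to -\infty$. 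Fubini is therefore legitimate.

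After swapping the order, I would change variable $y=x+z$ in the term containing $u(t,x+z)\varphi(x)$ and integrate by parts in the term $\varphi(x)\partial_x u(t,x)$ (both steps valid since $\varphi$ is compactly supported and $u$ is bounded). The inner integral becomes $\int_\R u(t,y)\,[\varphi(y-z) - \varphi(y) + z\varphi'(y)]\,dy$, and the substitution $z\mapsto -w$ flips the outer range onto $(0,\infty)$ and turns the bracket into $\varphi(y+w) - \varphi(y) - w\varphi'(y)$. A second application of Fubini, now justified by the $C^\infty_c$ regularity of $\varphi$ and the boundedness of $u$ via an analogous decomposition into small $w$ (quadratic control by $\varphi''$ with compactly supported effect) and large $w$ (using $\varphi,\varphi'\in L^1$), identifies the $w$-integral as $\partial_x\overline{\DD^\alpha}[\varphi](y)$, yielding the desired identity for each $t$; integrating in $t$ is then immediate. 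The principal technical obstacle is making Fubini rigorous on both sides, and the equivalent representations from Lemma~\ref{equiv:repre} are precisely what make the required absolute integrability clean.
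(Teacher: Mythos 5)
Your proposal is correct, but it takes a genuinely different route from the paper. For the duality identity the paper works directly with the Weyl/Caputo-type definition of $\DD^\alpha$: it integrates by parts in $x$, interchanges the order of integration, integrates by parts a second time, and is then left with a boundary term $\lim_{y\to-\infty} u(t,y)\,\overline{\DD^\alpha}[\varphi](y)$, whose vanishing it must establish separately (by splitting the integral defining $\overline{\DD^\alpha}[\varphi]$ at a finite point and using dominated convergence plus one more integration by parts). You instead transfer everything to the singular-integral (L\'evy-type) representation of Lemma~\ref{equiv:repre}: the Taylor bound $|u(t,x+z)-u(t,x)-z\,\partial_x u(t,x)|\leq C\min\{z^2,1+|z|\}$ makes the double integral absolutely convergent, and the identity then follows from Fubini, the translation $y=x+z$, and one compactly supported integration by parts, with no boundary term at $-\infty$ to control; the second Fubini is likewise justified by the analogous Taylor/$L^1$ bounds on $\varphi$. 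Your reflection trick $\overline{\DD^\alpha}[g](x)=-\DD^\alpha[\tilde g](-x)$, $\tilde g(y)=g(-y)$, also replaces the paper's "repeat the manipulations of Lemma~\ref{equiv:repre}" by a two-line deduction, and the signs work out as you state. What the paper's route buys is a self-contained argument from the original definition together with the decay of $\overline{\DD^\alpha}[\varphi]$ at $-\infty$ as a by-product; what yours buys is a cleaner duality argument in the standard style for nonlocal L\'evy operators, with all integrability checks explicit and no boundary analysis. (Both proofs, yours and the paper's, really establish the identity for each fixed $t$ and then integrate; this matches how the lemma is actually used, with test functions compactly supported in $t$.)
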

\begin{remark}
We notice that the equivalent representation of $\overline{\DD^\alpha}[g](x)$ given in this lemma is the left Weyl-Marchaud (\cite{Weyl}, \cite{Marchaud}) fractional derivative of order $\alpha$.
\end{remark}

\begin{proof}
We start with $\partial_x \DD^\alpha$. First we integrate by parts, the we interchange the order of integration, and we integrate by parts a second time, this gives:
\begin{equation}\label{D:Manipu2:bar}
\int_{\R}{\partial_x \DD^\alpha [u(t,\cdot)](x) \varphi(x) \ dx} =  \int_{\R}  u(t,y) \partial_y \overline{\DD^\alpha}[\varphi](y) \, dy 
-\lim_{y\to-\infty}  u(t,y) \overline{\DD^\alpha}[\varphi](y)
\end{equation}
(observe that the first boundary term vanishes trivially). Let us show that the last term vanishes. Since $u\in C_b^2$, it is enough to show that 
$\lim_{y\to-\infty} \overline{\DD^\alpha}[\varphi](y)=0$ for all $\varphi\in C^\infty_c(\R)$. We observe that, for any $r>0$, we can write
\begin{equation}\label{Dbar:split}
\lim_{y\to -\infty}{\overline{\DD^\alpha}[\varphi](y)} = 
\lim_{y\to -\infty}\int_{0}^{r}{\frac{\varphi'(z+y)}{z^\alpha}}\ dz + \lim_{y\to -\infty}\int_{r}^{\infty}{\frac{\varphi'(z+y)}{z^\alpha}}\ dz,
\end{equation}
and the first term vanishes by the dominated convergence theorem. For the second term in (\ref{Dbar:split}), we apply integration by parts, to get 
\[
\lim_{y\to -\infty}{\overline{\DD^\alpha}[\varphi](y)} =
\lim_{y\to -\infty}\left(\left.\frac{\varphi(z+y)}{z^\alpha}\right|_r^\infty\right) +\alpha \lim_{y\to -\infty} \int_{r}^{\infty}{\frac{\varphi(z+y)}{z^{\alpha+1}}}\ dz. 
\]

The first term in the last identity clearly vanishes, and the second does too, again, by applying the dominated convergence theorem. This implies that (\ref{Dbar:split}) vanishes, and so does the last term in (\ref{D:Manipu2:bar}). 

It remains to prove the equivalent integral representations of $\overline{\DD^\alpha}$ and $\partial_x\overline{\DD^\alpha}$. These are shown as in the proof of Lemma~\ref{equiv:repre}, we do not write it here.
\end{proof}

We can now prove the entropy inequality for continuous entropies:
\begin{theorem}[Weak viscous entropy inequality]\label{entropy:smooth} Given $\e>0$, $\eta\in C(\R)$ convex and $u^\e\in C^2_b((0,\infty)\times\R)$ a solution of (\ref{Problem2}), then for all 
$\varphi \in C_c^\infty((0,\infty)\times \R)$
\begin{equation}\label{entropy:eps}
  \int_{0}^{\infty}\int_{\R} 
\Big( 
\eta(u^\e(t,x)) \partial_t\varphi(t,x) + q(u^\e(t,x)) \partial_x \varphi(t,x) 
+ \e \, \eta(u^\e(t,x)) \partial_x \overline{\DD^\alpha}[\varphi(t,\cdot)](x)
\Big) \, dx\, dt\geq 0,
  \end{equation}
where $q$ is given by
\begin{equation}\label{entropy:flux}
q(u)=f'(u)\eta(u)-f'(0)\eta(0)-\int_0^u f''(z)\eta(z)\,dz.
\end{equation}
\end{theorem}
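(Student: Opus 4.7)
\textbf{The plan is} to first treat the case where $\eta\in C^2(\R)$ is convex and recover the merely continuous case by a mollification $\eta_n=\eta\ast\rho_n$: each $\eta_n$ is smooth and convex, and together with the induced flux $q_n$ given by (\ref{entropy:flux}) converges to $\eta$ and $q$ uniformly on the range of $u^\e$, which is bounded by Proposition~\ref{global:existence}. Dominated convergence then transfers (\ref{entropy:eps}) to the limit, using that $\varphi$ has compact support and that $\partial_x\overline{\DD^\alpha}[\varphi]\in L^1$ (which follows from the integral representation in Lemma~\ref{integrationbyparts} since $0<\alpha<1$).

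For $\eta\in C^2$ convex, I would multiply (\ref{Problem2}) by $\eta'(u^\e)$; by Proposition~\ref{time:reg}, $u^\e$ satisfies the PDE classically, so the chain rule yields the pointwise identity
\[
\partial_t\eta(u^\e)+\partial_x q(u^\e)=\e\,\eta'(u^\e)\,\partial_x\DD^\alpha[u^\e],
\]
where $q'(u)=\eta'(u)f'(u)$; integrating this primitive by parts reproduces the formula (\ref{entropy:flux}). The main obstacle is the nonlocal Kato-type inequality
\[
\eta'(u^\e(t,x))\,\partial_x\DD^\alpha[u^\e(t,\cdot)](x)\leq \partial_x\DD^\alpha[\eta(u^\e(t,\cdot))](x).
\]
To establish it, I use the integral representation (\ref{integralformula1}). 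With $a=u^\e(t,x)$ and $b=u^\e(t,x+z)$, the convexity bound $\eta'(a)(b-a)\leq \eta(b)-\eta(a)$ together with the identity $\eta'(u^\e)\,\partial_x u^\e=\partial_x\eta(u^\e)$ shows that the integrand on the left is, pointwise in $z$, dominated by the integrand on the right; since the kernel $|z|^{-(\alpha+2)}$ is positive, the inequality survives integration in $z\in(-\infty,0)$.

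Combining the two displays gives $\partial_t\eta(u^\e)+\partial_x q(u^\e)\leq \e\,\partial_x\DD^\alpha[\eta(u^\e)]$ pointwise. I then test against $\varphi\in C_c^\infty((0,\infty)\times\R)$ with $\varphi\geq 0$, an assumption implicit in the statement since only nonnegative test functions preserve the sign. The $t$- and $x$-derivatives are moved onto $\varphi$ by classical integration by parts, with vanishing boundary terms because $\varphi$ is compactly supported in $(0,\infty)\times\R$. Since $\eta(u^\e)\in C^2_b((0,\infty)\times\R)$, Lemma~\ref{integrationbyparts} applies to the nonlocal term and yields $\e\int_0^\infty\int_\R \eta(u^\e)\,\partial_x\overline{\DD^\alpha}[\varphi]\,dx\,dt$. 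Collecting the three contributions gives exactly (\ref{entropy:eps}).
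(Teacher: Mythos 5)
Your proposal is correct and follows essentially the same route as the paper: establish the pointwise Kato-type inequality $\eta'(u^\e)\,\partial_x\DD^\alpha[u^\e]\leq\partial_x\DD^\alpha[\eta(u^\e)]$ from the singular-integral representation of Lemma~\ref{equiv:repre} and convexity, test the resulting inequality against a nonnegative $\varphi$, transfer the nonlocal operator to $\varphi$ via Lemma~\ref{integrationbyparts}, and then pass from $C^2$ to merely continuous convex $\eta$ by mollification with locally uniform convergence of the entropy fluxes. The details you supply (cancellation of the linear-in-$z$ terms, the integration by parts giving (\ref{entropy:flux}), the implicit nonnegativity of $\varphi$) all match the paper's argument.
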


\begin{proof}
First we assume that $\eta\in C^2(\R)$. We then notice that 
\begin{equation}\label{convex:multi}
\partial_x \DD^\alpha[\eta(\varphi)](x) \geq \eta'(\varphi) \partial_x \DD^\alpha[\varphi](x).
\end{equation}
This follows from the convexity of $\eta$ applied in the representation of $\partial_x \DD^\alpha[\eta(\varphi)](x)$ given by 
Lemma~\ref{equiv:repre} (\ref{integralformula1}). Now, using this and multiplying the equation in (\ref{Problem2}) by $\eta'(u^\e(t,x))$ gives the entropy-type inequality
\begin{equation}\label{entropyinequality2}
  \partial_t \eta(u^\e(t,x)) + \partial_x q(u^\e(t,x))
  \leq \e \, \partial_x \DD^\alpha[\eta(u^\e(t,\cdot))](x).
\end{equation}

We need a weak version of (\ref{entropyinequality2}), thus we multiply it by a non-negative test function $\varphi \in C_c^\infty((0,\infty)\times \R)$ and integrate over the whole domain. After integration by parts, we get
\[
\int_{0}^{\infty}\!\!\!\!\!\int_{\R} 
\Big( 
\eta(u^\e(t,x)) \partial_t\varphi(t,x) + q(u^\e(t,x)) \partial_x \varphi(t,x) 
+ \e \ \partial_x \DD^\alpha[\eta(u^\e(t,\cdot))](x) \varphi(t,x)
\Big) \, dx\, dt\geq 0.
\]
With application of Lemma~\ref{integrationbyparts} we conclude (\ref{entropy:eps}).

It remains to show the result for continuous convex entropies. 
Let $\eta\in C(\R)$ convex, and let $\omega_n\in C_c^\infty(\R)$ such that 
$\omega_n(x)=n\omega(nx)$ with $\omega\geq 0$, $\int_\R \omega=1$, then the functions 
$\eta_n=\omega_n\ast \eta\in C^2(R)$ are convex and converge locally uniformly 
to $\eta$. 

Associated to each element of this sequence of entropies we have an entropy flux $q_n(x)=\int_0^x f'(z)\eta_n'(z)\,dz$. 
Integrating by parts and taking the limit $n\to\infty$ one obtains that $q_n$ converges locally uniformly to (\ref{entropy:flux}). 
Since the inequality (\ref{entropy:eps}) is satisfied for all smooth entropy pairs $(\eta_n,q_n)$, then passage to the limit as
 $n\to\infty$ gives the desired inequality.
\end{proof}

We then show the $L^1$-contraction property:
\begin{theorem}[$L^1$-contraction]\label{L1:contraction}
For all $\e >0$, given $u_0^\e$, $v_0^\e \in L^\infty(\R)$ such that $u_0^\e-v_0^\e \in L^1(\R)$, 
let $u^\e$ and $v^\e$ be the corresponding mild solutions of 
(\ref{Problem1}) with these initial conditions, respectively. Then, for all $t\in (0,\infty)$, $u^\e(t,\cdot)-v^\e(t,\cdot) \in L^1(\R)$, and  
\begin{equation*}
\|u^\e(t,\cdot)-v^\e(t,\cdot)\|_{1} \leq \|u_0^\e - v_0^\e\|_{1}.
\end{equation*}
\end{theorem}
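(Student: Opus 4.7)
The plan is to use Kruzhkov's doubling of variables, as in \cite{Cifani}, adapted to the asymmetric operator $\DD^\alpha$. First, I apply the weak entropy inequality of Theorem~\ref{entropy:smooth} with the Kruzhkov entropy $\eta_k(u)=|u-k|$, which is continuous and convex; the associated flux from (\ref{entropy:flux}) can be taken (modulo a $k$-dependent additive constant, irrelevant in the weak formulation) to be $q_k(u)=\sign(u-k)(f(u)-f(k))$. I write this inequality for $u^\e(t,x)$ with $k=v^\e(s,y)$ and for $v^\e(s,y)$ with $k=u^\e(t,x)$, tested against the common non-negative function
\[
\varphi_n(t,x,s,y)=\phi\!\left(\tfrac{t+s}{2},\tfrac{x+y}{2}\right)\rho_n(t-s)\,\rho_n(x-y),
\]
with $\phi\in C_c^\infty((0,\infty)\times\R)$, $\phi\geq 0$, and $\rho_n$ a standard one-dimensional mollifier. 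Summing the two inequalities yields a single quadruple integral in which the derivatives $(\partial_t+\partial_s)\varphi_n$ and $(\partial_x+\partial_y)\varphi_n$ hit only the midpoint variables of $\phi$, while the nonlocal contribution carries the combination $(\partial_x\overline{\DD^\alpha}_x+\partial_y\overline{\DD^\alpha}_y)\varphi_n$.

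Next I pass to the limit $n\to\infty$. The local terms are treated by the standard argument: continuity of $u^\e,v^\e$ from Propositions~\ref{local:existence} and~\ref{time:reg} allows the mollifiers to localize $(s,y)$ to $(t,x)$, producing the usual Kruzhkov integrals on the diagonal. The novel ingredient is the limit of the nonlocal term. Using the second-difference integral representation of $\partial_x\overline{\DD^\alpha}$ from Lemma~\ref{integrationbyparts} and the translation invariance of $\rho_n(x-y)$ in its argument, the combination $\partial_x\overline{\DD^\alpha}_x\varphi_n+\partial_y\overline{\DD^\alpha}_y\varphi_n$ rearranges so that the shift variable only interacts with the midpoint argument of $\phi$; the mollifier mass is absorbed and, in the limit, one recovers $\partial_x\overline{\DD^\alpha}[\phi(t,\cdot)](x)$ evaluated on the diagonal. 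This yields the weak inequality
\[
\int_0^\infty\!\!\int_\R\!\Big(|u^\e-v^\e|\partial_t\phi+\sign(u^\e-v^\e)(f(u^\e)-f(v^\e))\partial_x\phi+\e|u^\e-v^\e|\,\partial_x\overline{\DD^\alpha}[\phi(t,\cdot)](x)\Big)\,dx\,dt\geq 0
\]
for every non-negative $\phi\in C_c^\infty((0,\infty)\times\R)$.

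To conclude, I specialize $\phi(t,x)=\chi_\delta(t)\psi_R(x)$, where $\chi_\delta$ approximates the indicator of a time interval $(t_1,t_2)\subset(0,T)$ and $\psi_R\in C_c^\infty(\R)$ is a cut-off with $0\leq\psi_R\leq 1$ and $\psi_R\equiv 1$ on $[-R,R]$. Letting $\delta\to 0$ bounds $\int_\R|u^\e(t_2)-v^\e(t_2)|\psi_R\,dx$ by $\int_\R|u^\e(t_1)-v^\e(t_1)|\psi_R\,dx$ plus an error involving $\psi_R'$ and $\partial_x\overline{\DD^\alpha}[\psi_R]$. The uniform $L^\infty$ bound of Proposition~\ref{global:existence} together with the explicit decay of $\partial_x\overline{\DD^\alpha}[\psi_R]$ deduced from Lemma~\ref{integrationbyparts} let me send $R\to\infty$, and the $L^1_{loc}$ continuity at $t=0$ from Proposition~\ref{welldefined} lets $t_1\to 0^+$, producing the desired contraction. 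The hardest step is the passage to the limit in the nonlocal term after doubling of variables: since the symmetric cancellation used in \cite{Cifani} is not available here, one must exploit the asymmetric integral representation of Lemma~\ref{integrationbyparts} directly; a secondary but also non-trivial point is the tail control of $\partial_x\overline{\DD^\alpha}[\psi_R]$ as $R\to\infty$, which must cope with the merely algebraic decay of the kernel.
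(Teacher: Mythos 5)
Your central step---passing to the limit in the nonlocal term after doubling variables---contains a genuine gap. Because you start from the fully weak inequality of Theorem~\ref{entropy:smooth} (operator already moved onto the test function), the doubled nonlocal contribution is $|u^\e(t,x)-v^\e(s,y)|$ times $\partial_x\overline{\DD^\alpha}$ acting in $x$ plus $\partial_y\overline{\DD^\alpha}$ acting in $y$, applied to $\varphi_n$, and these two operators shift $x$ and $y$ \emph{separately}. Your claim that this combination ``rearranges so that the shift variable only interacts with the midpoint argument of $\phi$'' is false: $\rho_n(x-y)$ is invariant only under the simultaneous diagonal shift $(x,y)\mapsto(x+z,y+z)$, whereas the separate shifts produce $\rho_n(x-y\pm z)$, so the singular kernel does hit the mollifier. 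Indeed, writing $w=x-y$, $m=\tfrac{x+y}{2}$, the numerator of the summed operators applied to $\varphi_n$ splits as
\[
\bigl[\rho_n(w+z)+\rho_n(w-z)-2\rho_n(w)\bigr]\,\phi\!\left(m+\tfrac{z}{2}\right)
+2\,\rho_n(w)\Bigl[\phi\!\left(m+\tfrac{z}{2}\right)-\phi(m)-\tfrac{z}{2}\,\phi'(m)\Bigr];
\]
only the second piece yields (a multiple of) the desired diagonal term $|u^\e-v^\e|\,\partial_x\overline{\DD^\alpha}[\phi]$, while the first piece, integrated against $|u^\e(t,x)-v^\e(s,y)|$ (which is merely Lipschitz, not $C^2$, across its zero set), does not vanish as $n\to\infty$: it produces an extra contribution built from second differences of $|u^\e(t,\cdot)-v^\e(s,\cdot)|$ transverse to the diagonal, with no favourable sign or obvious bound. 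Hence the clean diagonal inequality you assert does not follow. This is exactly why the paper does \emph{not} double variables on the fully weak form: it keeps the singular operator acting on the solutions (inequality (\ref{entropy:1})), uses convexity to obtain the Kato-type estimate (\ref{L1:key:est}), which merges the two solution-side terms into the \emph{diagonal} two-variable operator $\DD_{x,y}^{\alpha+1}$ acting on $|u(t,\cdot)-v(s,\cdot)|$, splits it into a near part (left on the solutions) and a far part (dualised onto the test function), and only then removes the mollification---the diagonal shift passes through $\omega_\rho\!\left(\frac{y-x}{2}\right)$ untouched, which is what makes the limit legitimate. A correct proof must adopt this structure (or the equivalent near/far splitting of Cifani--Jakobsen); the fully weak inequality alone cannot carry the doubling argument.

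Two secondary points. First, you never establish $u^\e(t,\cdot)-v^\e(t,\cdot)\in L^1(\R)$, which is part of the statement and is also needed in your own final step: with only the $L^\infty$ bound of Proposition~\ref{global:existence}, the flux error term involving $\psi_R'$ (whose $L^1$ norm does not shrink as $R\to\infty$) does not vanish; the paper first proves the $L^1$ bound from the mild formulation and the kernel estimates of Proposition~\ref{K:prop} (estimate (\ref{L1:uv})) and then uses dominated convergence for both the flux and nonlocal remainders. Second, the limit $t_1\to0^+$ requires $\limsup_{t_1\to0^+}\|u^\e(t_1,\cdot)-v^\e(t_1,\cdot)\|_{1}\leq\|u_0^\e-v_0^\e\|_{1}$, which again comes from the mild formulation, not merely from $L^1_{loc}$ continuity at $t=0$ as you invoke.
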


We recall that uniqueness immediately follows from this theorem.

\begin{proof}
For simplicity of notation and without loss of generality, we take $\e=1$ throughout this proof. 
We thus skip the $\e$ dependency in the notation of the solutions. The proof is based on Kruzhkov's doubling variable technique 
and on specific choices of test functions of the right weak entropy inequality. Similar arguments can be found in \cite{KarlsenRisebro} and \cite{Cifani}. 

First, we show that $u(t,\cdot)-v(t,\cdot) \in L^1(\R)$. This follows from the mild formulation (\ref{solutioneq}), using that $u_0-v_0\in L^1(\R)$, that 
$u(t,\cdot)$, $v(t,\cdot) \in  L^\infty(\R)$ and Proposition~\ref{K:prop} {\it (iv)}, so that:
\begin{align}
\|u(t,\cdot)-v(t,\cdot)\|_1 & \leq  \| K(t,\cdot)\|_1 \|u_0-v_0\|_1 
\nonumber	\\
& \qquad + C\left( \|u(t,\cdot)\|_\infty,\|v(t,\cdot)\|_\infty\right)
\int_0^t\int_\R |\partial_x K(t-s,y)|dy\,ds \nonumber \\
 & \leq \|u_0-v_0\|_1  \label{L1:uv}
\\
& \qquad + C\left( \|u(t,\cdot)\|_\infty,\|v(t,\cdot)\|_\infty\right)  \int_0^t \frac{B_1}{(t-s)^{\frac{2}{1+\alpha}}}  \int_\R \frac{dy}{1 + (t-s)^{-\frac{3}{1+\alpha}}|y|^3 } ds  \nonumber \\ 
 & = \|u_0-v_0\|_1 
  \nonumber \\
& \qquad + C\left( \|u(t,\cdot)\|_\infty,\|v(t,\cdot)\|_\infty\right) 
B_1\frac{1+\alpha}{\alpha} t^{\frac{\alpha}{1+\alpha}} \int_\R \frac{1}{1+|z|^3}dz <\infty. \nonumber
\end{align}

We now proceed as in the proof of Lemma~\ref{entropy:smooth}, but we leave the terms with integrand of the form 
$\eta' \partial_x \DD^\alpha$ as such, then we can argue, similarly for just continuous entropies, so that instead 
of (\ref{entropy:eps}) we obtain for any finite $T>0$
\begin{equation}\label{entropy:1}
    \int_{0}^{T}\int_{\R} 
\Big( 
\eta(u(t,x)) \partial_t\varphi(t,x) + q(u(t,x)) \partial_x \varphi(t,x) 
+ \eta'(u(t,x)) \partial_x \DD^\alpha[u(t,\cdot)](x)\varphi(t,x)
\Big) dx\, dt \geq 0.
 \end{equation}

Let $\psi = \psi(t,x,s,y) \in C_c^\infty((0,T) \times \R\times(0,T) \times\R)$ be a non-negative test function. We consider the family of Kruzhkov's entropies 
$\eta_v(u(t,x))= |u(t,x)-v(s,y)|$ and $\eta_u(v(s,y))= |v(s,y)-u(t,x)|$, respectively, and write the corresponding entropy inequality (\ref{entropy:1}) for $u(t,x)$ and $v(s,y)$ separately.
Then, integrating over $(s,y)\in (0,T)\times\R$ and over $(t,x)\in (0,T)\times\R$, respectively, each of these entropy inequalities, we add them up and apply Fubini's theorem, to obtain
\begin{align}\label{3.3}
\int_{0}^{T}\!&\int_{\R}\!\int_{0}^{T}\!\int_{\R}\Big\{|u(t,x)-v(s,y)| (\partial_t + \partial_s) \psi(t,x,s,y) \cr
& + \sign\left(u(t,x)-v(s,y)\right)\left(f(u(t,x)) - f(v(s,y)) \right) (\partial_x + \partial_y) \psi(t,x,s,y)\\
  &+\sign\left(u(t,x)-v(s,y)\right)\left(\partial_x \DD^\alpha[u(t,\cdot)](x)-\partial_y \DD^\alpha[v(s,\cdot)](y)\right)\psi(t,x,s,y)\Big\}
  dx\,dt\,dy\,ds \geq 0.\nonumber
\end{align}

In order to find a suitable entropy inequality, we have to manipulate the last term of (\ref{3.3}), 
\begin{equation}\label{lalala}
I:=\int_{0}^{T}\!\!\!\!\int_{\R}\!\int_{0}^{T}\!\!\!\!\int_{\R}
\sign(u(t,x)-v(s,y))\,\left(\partial_x\DD^\alpha[u(t,\cdot)](x)-\partial_y\DD^\alpha[v(s,\cdot)](y)\right)\psi(t,x,s,y)
dx\,dt\,dy\,ds.
\end{equation}
We use Lemma~\ref{equiv:repre} in the integrand of $I$:
\begin{align}\label{L1:key:est}
&\!\!\mbox{sgn}(u(t,x)-v(s,y))\,\left(\partial_x\DD^\alpha[u(t,\cdot)](x)-\partial_y\DD^\alpha[v(s,\cdot)](y)\right)
	\cr
&=d_{\alpha+2}\,\mbox{sgn}(u(t,x)-v(s,y))
	\cr
&\cdot\int_{-\infty}^0 \frac{u(t,x+z)-v(s,y+z)-(u(t,x)-v(s,y))-(u_x(t,x)+v_y(s,y))z}{|z|^{\alpha+2}}dz
	\\
&\leq d_{\alpha+2}\int_{-\infty}^0 \frac{|u(t,x+z)-v(s,y+z)|-|u(t,x)-v(s,y)|-
	(\partial_x+\partial_y)\left(|u(t,x)-v(s,y)|\right)z}{|z|^{\alpha+2}}dz. \nonumber
\end{align}


\noindent For simplicity of notation, we define the following operator acting on functions of two o more variables:
\begin{equation}\label{def:new:D}
\DD_{x,y}^{\alpha+1}[g](x,y):=d_{\alpha+2}\int_{-\infty}^0\frac{g(x+z,y+z)-g(x,y)-(\partial_x+\partial_y)g(x,y) \,z}{|z|^{\alpha+2}}dz.
\end{equation}
We can rewrite the estimate on $I$ based on (\ref{L1:key:est}) as
\begin{equation}\label{est:i}
I\leq \int_{0}^{T}\!\int_{\R}\!\int_{0}^{T}\!\int_{\R}
\DD_{x,y}^{\alpha+1}[|u(t,\cdot)-v(s,\cdot)| ](x,y)  \psi(t,x,s,y) dx\,dt\,dy\,ds.
\end{equation}

It is now convenient to split the operator (\ref{def:new:D}) into two integrals. For any $r>0$, we write
\[
\DD_{x,y}^{\alpha+1}[|u(t,\cdot)-v(s,\cdot)|](x,y)=( {}_r\DD_{x,y}^{\alpha+1} + {}^r\DD_{x,y}^{\alpha+1})[|u(t,\cdot)-v(s,\cdot)|](x,y)
  \]
  with, for a function $g(x,y)$, 
  \[
  {}_r\DD_{x,y}^{\alpha+1} [g](x,y)=d_{\alpha+2} \int_{-r}^0
  \frac{g(x+z,y+z)-g(x,y)-(\partial_x+\partial_y)g(x,y) \,z}{|z|^{\alpha+2}}dz.
  \]
  and with the obvious definition for ${}^r\DD_{x,y}^{\alpha+1}$.

  With this splitting, from (\ref{3.3}) and (\ref{est:i}), we obtain the following entropy type inequality:
\begin{equation}\label{L1:entropy:ineq}
\begin{split}
&\int_{0}^{T}\!\int_{\R}\!\int_{0}^{T}\!\int_{\R}\Big\{|u(t,x)-v(s,y)| (\partial_t + \partial_s) \psi(t,x,s,y) \\
&+  \sign(u(t,x)-v(s,y))\left(f(u(t,x)) - f(v(s,y)\right) (\partial_x + \partial_y) \psi(t,x,s,y)\\
  &+\left( {}_r\DD_{x,y}^{\alpha+1}[|u(t,\cdot)-v(s,\cdot)|](x,y)+
    {}^r\DD_{x,y}^{\alpha+1}[|u(t,\cdot)-v(s,\cdot)|](x,y)  \right)\psi(t,x,s,y)\Big\}
  dx\,dt\,dy\,ds \geq 0.
\end{split}
\end{equation}
We observe that, since $r>0$, the last term can be seen as three finite integrals. Using Fubini's theorem and the change of variables $(x+z,y+z,z)\to (x,y,-z)$ in the first, Fubini's theorem and the change of variable $z\to-z$ in the second and the third, and also integration by parts in the third, we obtain 
\[
\begin{split}
&\int_{0}^{T}\!\int_{\R}\!\int_{0}^{T}\!\int_{\R}
 {}^r\DD_{x,y}^{\alpha+1}\left[|u(t,\cdot)-v(s,\cdot)|\right](x,y) \psi(t,x,s,y)dx\,dt\,dy\,ds\\
&=\int_{0}^{T}\!\int_{\R}\!\int_{0}^{T}\!\int_{\R}
|u(t,x)-v(s,y)| {}^r\overline{\DD}_{x,y}^{\alpha+1}[\psi(t,\cdot,s,\cdot)](x,y)dx\,dt\,dy\,ds,
\end{split}
\]
with
\begin{equation}\label{def:new:Dadjoint}
{}^r\overline{\DD}_{x,y}^{\alpha+1}[g](x,y)=
d_{\alpha+2} \int^{\infty}_{r}  \frac{g(x+z,y+z)-g(x,y)-(\partial_x+\partial_y)g(x,y)\,z}{|z|^{\alpha+2}}  dz. 
\end{equation}
Now, taking the limit $r\to 0$ in (\ref{L1:entropy:ineq}), with the last term as above, we finally get, 
by the dominated convergence theorem, the entropy inequality 
\begin{equation}\label{L1:entropy:ineq:def}
\begin{split}
&\int_{0}^{T}\!\int_{\R}\!\int_{0}^{T}\!\int_{\R}
\Big(|u(t,x)-v(s,y)| (\partial_t + \partial_s) \psi(t,x,s,y) \\
& + \sign\left(u(t,x)-v(s,y)\right)\left(f(u(t,x)) - f(v(s,y)) \right) (\partial_x + \partial_y) \psi(t,x,s,y)\\
  &+|u(t,x)-v(s,y)|\, \overline{\DD}_{x,y}^{\alpha+1}[\psi(t,\cdot,s,\cdot)](x,y) \Big)
  dx\,dt\,dy\,ds \geq 0
\end{split}
\end{equation}
where 
\[
\overline{\DD}_{x,y}^{\alpha+1}[g](x,y)={}^0\overline{\DD}_{x,y}^{\alpha+1}[g](x,y).
\]

We now specify the test functions $\psi$ in order to derive the $L^1$-contraction from (\ref{L1:entropy:ineq:def}). We take:
\[
\psi(t,x,s,y) =\omega_\rho\left(\frac{s-t}{2}\right) \omega_\rho\left(\frac{y-x}{2}\right) \varphi\left(\frac{t+s}{2},\frac{x+y}{2}\right),
\]
where for any $\rho>0$, 
and $\omega_\rho(s) = \omega(s/\rho)/\rho$ for a non-negative $\omega \in C_c^\infty(\R)$ satisfying, $\omega(-s) = \omega(s)$, $\omega(0)=1$, $\omega(s)=0$ for all $|s|\geq 1$ and $\int_{\R}{\omega(s)\ ds}=1$. And, for the moment we ask $\varphi \in C_c^\infty((0,\infty)\times\R)$ to be non-negative, we will specify the choice of this function later.

In this way, we obtain that
\begin{eqnarray*}
&(\partial_t + \partial_s)\psi(t,x,s,y) =\omega_\rho\left(\frac{s-t}{2}\right) \omega_\rho\left(\frac{y-x}{2}\right) (\partial_t + \partial_s) \varphi\left(\frac{t+s}{2},\frac{x+y}{2}\right), 
\\
&(\partial_x + \partial_y)\psi(t,x,s,y) = \omega_\rho\left(\frac{s-t}{2}\right)\omega_\rho\left(\frac{y-x}{2}\right) (\partial_x + \partial_y) \varphi\left(\frac{t+s}{2},\frac{x+y}{2}\right), \\
&\overline{\DD}_{x,y}^{\alpha+1}[\psi(t,\cdot,s,\cdot)](x,y)=\omega_\rho\left(\frac{s-t}{2}\right) \omega_\rho\left(\frac{y-x}{2}\right) \overline{\DD}_{x,y}^{\alpha+1}\left[\varphi\left(\frac{t+s}{2},\frac{\cdot+\cdot}{2}\right)\right](x,y). 
\end{eqnarray*}

With the changes of variables:
\[
r=\frac{s-t}{2},\ r'=\frac{s+t}{2}, \ z=\frac{y-x}{2},\ z'=\frac{x+y}{2}
\]
we obtain
\begin{eqnarray*}
(\partial_t + \partial_s)\varphi\left(\frac{t+s}{2},\frac{x+y}{2}\right)=\partial_{r'}\varphi(r',z') \\
 (\partial_x + \partial_y) \varphi\left(\frac{t+s}{2},\frac{x+y}{2}\right)=\partial_{z'}\varphi(r',z')  \\
\overline{\DD}_{x,y}^{\alpha+1}\left[\varphi\left(\frac{t+s}{2},\frac{\cdot+\cdot}{2}\right)\right](x,y)= \partial_{z'} \overline{\DD^\alpha}[\varphi(r',\cdot)](z'),
\end{eqnarray*}
(see the last statement of Lemma~\ref{integrationbyparts} for the expression of  $\partial_{z'} \overline{\DD^\alpha}$).
With these test functions and the above change of variables, (\ref{L1:entropy:ineq:def}) becomes:
\begin{equation}\label{3.7}
\begin{split}
&
\int_{0}^{T}\!\int_{\R}\!\int_{-\frac{T}{2}}^{\frac{T}{2}}\!\int_\R
\omega_\rho(r) \omega_\rho(z)\Big(|u(r'-r,z'-z)-v(r+r',z'+z)|\partial_r'\varphi(r',z') \\
  +& \sign(u(r'-r,z'-z)-v(r+r',z'+z))(f(u(r'-r,z'-z)) - f(v((r+r',z'+z))) \partial_{z'}  \varphi(r',z')\\
  +&  |u(r'-r,z'-z)-v(r+r',z'+z)|
\partial_{z'} \overline{\DD^\alpha}[\varphi(r',\cdot)](z') \Big) dz\,dr\,dz'\,dr' \geq 0.
\end{split}
\end{equation}
 
Applying the Lebesgue differentiability theorem, taking the limit  $\rho \to 0^+$, (\ref{3.7}) reduces to 
\begin{eqnarray}\label{3.8}
\nonumber \int_{0}^{T}\int_{\R} & |u(t,x)-v(t,x)| \partial_t \varphi(t,x) + \sign(u(t,x)-v(t,x))(f(u(t,x))-f(v(t,x)))\partial_x \varphi(t,x)
\\
& +  |u(t,x)-v(t,x)| \partial_x \overline{\DD^\alpha}[\varphi(t,\cdot)](x) \ dx\,dt \geq 0,
\end{eqnarray}
where we have renamed the variables ($(t,x)$ instead of $(r',z')$).

In order to conclude the proof, we now choose for $\mu$, $R>0$,
$\varphi(t,x)=\phi_\mu(x)\Theta_R(t)$ where, 
\[
\phi_\mu(x)= \int_{\R} \omega(x-y)\chi_{|y|<\mu} dy =\int_{x-\mu}^{x+\mu} \omega(z)dz,
\]
thus all derivatives of $\phi_\mu$ are bounded uniformly in $\mu$ and vanish for all $||x|-\mu|>1$. 
And, for any pair $0<R<t_1<t_2$, we choose
\[
\Theta_R (t)=\int_{-\infty}^t(\omega_R(\tau-t_1) -\omega_R(\tau-t_2))d\tau.
\]

First, we observe that taking the limit $\mu\to\infty$, 
the inequality (\ref{3.8}) reduces to 
\begin{equation}\label{3.10}
\int_{0}^{T}\int_{\R}|u(t,x)-v(t,x)| \Theta_R'(t) \ dxdt \geq 0.
\end{equation}
Indeed, concerning the flux-term in (\ref{3.8}), we find that
\begin{eqnarray*}
&\displaystyle\int_{0}^{T}\!\int_{\R} \sign(u(t,x)-v(t,x)) 
\left(f(u(t,x))-f(v(t,x))\right) \partial_x \varphi(x,t) dx\,dt \\
&\displaystyle\leq L \|\Theta_R\|_{L^\infty(0,\infty)} 
\int_{0}^{T}\!\int_{\R} |u(t,x)-v(t,x)|\, |\omega(x+\mu)-\omega(x-\mu)|dx\,dt
 \xrightarrow{\mu\to \infty} 0.
\end{eqnarray*}
Here, we have applied the dominated convergence theorem, since $u-v\in L^1$ and 
$|\omega(x+\mu)-\omega(x-\mu)|\to 0$ as $\mu\to \infty$ for all $x\in \R$.

The term in (\ref{3.8}) containing the non-local operator also tends to zero as $\mu\to \infty$. To see this,
 note that $\left|\partial_x\overline{\DD^\alpha}[\phi_\mu](x)\right|$ is uniformly bounded in $\mu$, 
since, arguing as for the operator $\partial_x\DD^\alpha$, one obtains for some $C>0$ independent of $\mu$
\[
\left|\partial_x\overline{\DD^\alpha}[\phi_\mu](x)\right| \leq 
C_0 \max\{\|\phi_\mu\|_\infty, \|\phi_\mu'\|_\infty,\|\phi_\mu''\|_\infty \}\leq C.
\]
Now, by integrability of $u-v$ in $x$ and H\"older's inequality, we obtain
\[
\displaystyle\int_{0}^{T} \int_{\R}|u(t,x)-v(t,x)| \left|\partial_x \overline{\DD^\alpha}[\phi_\mu](x)\right| dx\,dt 
\leq T C \sup_{t\in(0,T)}\|u(t,\cdot)-v(t,\dot)\|_1 .
\]

Observe that 
\[
\partial_x \overline{\DD^\alpha}[\phi_\mu](x)
 = d_\alpha \int_x^\infty \frac{\omega(z+\mu)-\omega(z-\mu)}{(z-x)^\alpha}dz\to 0 \quad  \mbox{as} \quad \mu\to\infty \quad\mbox{a.e.}
\]
because we can take, for each $x$, $\mu$ large enough so that $x+\mu>1$ and $x-\mu<-1$:
\[
\partial_x \overline{\DD^\alpha}[\phi_\mu](x)
 = -d_\alpha \int_x^\infty \frac{\omega(z-\mu)}{(z-x)^\alpha}dz = -d_\alpha \int_{-1-x+\mu}^{1-x+\mu} \frac{\omega(z+x-\mu)}{z^\alpha}dz,
\]
and we can apply the dominated convergence theorem.
With this, we can conclude, also by the dominated convergence theorem, that
\[
\lim_{\mu\to \infty} \int_{0}^{\infty}\!\int_{\R}|u(t,x)-v(t,x)| \left|\partial_x \overline{\DD^\alpha}[\phi_\mu](x)\right| dx\,dt=0.
\]

We use now the definition of $\Theta_R$ in (\ref{3.10}). Since 
$\Theta'_R(t)= \omega_R(t-t_1)- \omega_R(t-t_2)$, we have
\begin{eqnarray*}
\int_{0}^{T}\!\int_{\R}|u(t,x)-v(t,x)| \omega_R(t-t_2) \ dx\,dt \leq \int_{0}^{T}\!\int_{\R}|u(t,x)-v(t,x)|\omega_R(t-t_1)  dx\,dt,
\end{eqnarray*}
that can be written as
\begin{equation}\label{L1:con:t1t2}
\begin{split}
\frac{1}{R}\int_{-R}^{R}\int_{\R}|u(s+t_2,x)-v(s+t_2,x)|\, \omega\left(\frac{s}{R}\right) dx\,ds\\ \leq 
\frac{1}{R} \int_{-R}^{R}\int_{\R}|u(s+t_1,x)-v(s+t_1,x)|\, \omega\left(\frac{s}{R}\right) dx\,ds.
\end{split}
\end{equation}
We now take the limit $R\to 0$ in (\ref{L1:con:t1t2}), and by the Lebesgue differentiability theorem we obtain 
\[
\|(u-v)(t_2,\cdot)\|_{1} \leq \|(u-v)(t_1,\cdot)\|_{1}.
\]

Finally, the theorem follows by renaming $t_2$ to $t$ and taking the limit $t_1\to 0$, since using (\ref{L1:uv}), we obtain
\[
\limsup_{t_1\to 0^+}\|(u-v)(t_1,\cdot)\|_{1}\leq \|u_0-v_0\|_{1},
\]
thus the result follows.
\end{proof}

\section{The vanishing viscosity limit}\label{sec:limit}
In this section we show that in the limit when $\e\to 0^+$ in 
(\ref{Problem2}) we obtain the entropy solution associated to (\ref{Problem3}). We follow a doubling variable 
technique as in \cite{Droniou2}, but with the pertinent changes due to the different nonlocal operator in the viscous term.

We need the following technical Lemma: 

\begin{lemma}\label{L1integrable}
Let $\varphi \in C_c^\infty((0,\infty)\times \R)$, then the maps
\[
t \in (0,\infty) \mapsto \DD^\alpha[\varphi(t,\cdot)]\in L^1(\R) 
\, \quad t \in (0,\infty) \mapsto \overline{\DD^\alpha}[\varphi(t,\cdot)]\in L^1(\R) \] 
and
\[
t \in (0,\infty) \mapsto \partial_x \DD^\alpha[\varphi(t,\cdot)]\in L^1(\R) \, \quad t \in (0,\infty) \mapsto \partial_x \overline{\DD^\alpha}[\varphi(t,\cdot)]\in L^1(\R)
\] 
are continuous, and as functions of $(t,x)$, $\DD^\alpha[\varphi(t,\cdot)](x)$, $\overline{\DD^\alpha}[\varphi(t,\cdot)](x)$, $\partial_x\DD^\alpha[\varphi(t,\cdot)](x)$ and $\partial_x \overline{\DD^\alpha}[\varphi(t,\cdot)](x)$ are integrable over 
$(0,\infty)\times \R$. Moreover, there exists $C_\alpha>0$ such that  
\begin{equation}\label{norminequality}
\|\DD^\alpha \varphi\|_{1} \leq C_\alpha \left(\|\varphi'\|_{1} + \|\varphi\|_{1} \right), \quad \|\overline{\DD^\alpha} \varphi\|_{1} \leq C_\alpha \left(\|\varphi'\|_{1} + \|\varphi\|_{1} \right),
\end{equation}
and
\begin{equation}\label{norminequality2}
\|\partial_x \DD^\alpha \varphi\|_{1} \leq C_\alpha \left(\|\varphi''\|_{1} + \|\varphi'\|_{1} \right),\quad \|\partial_x \overline{ \DD^\alpha} \varphi\|_{1} \leq C_\alpha \left(\|\varphi''\|_{1} + \|\varphi'\|_{1} \right).
\end{equation}

\end{lemma}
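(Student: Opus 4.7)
My plan is to prove the two families of estimates (\ref{norminequality}) and (\ref{norminequality2}) first, since the continuity and integrability statements are straightforward consequences. For (\ref{norminequality}) I would write
\[
\DD^\alpha[\varphi](x)=d_\alpha\int_0^\infty y^{-\alpha}\varphi'(x-y)\,dy
\]
and split the integral at $y=1$. On $(0,1)$, by Fubini,
\[
\int_\R\left|\int_0^1 y^{-\alpha}\varphi'(x-y)\,dy\right|dx\leq \int_0^1 y^{-\alpha}\,dy\,\|\varphi'\|_1=\frac{\|\varphi'\|_1}{1-\alpha},
\]
which is finite precisely because $\alpha\in(0,1)$. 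On the tail $(1,\infty)$ I would integrate by parts in $y$, using the compact support of $\varphi$ to kill the boundary term at infinity, obtaining
\[
\int_1^\infty\frac{\varphi'(x-y)}{y^\alpha}\,dy=\varphi(x-1)-\alpha\int_1^\infty\frac{\varphi(x-y)}{y^{\alpha+1}}\,dy,
\]
whose $L^1_x$ norm, again by Fubini, is bounded by $(1+1)\|\varphi\|_1$. Combining gives (\ref{norminequality}) for $\DD^\alpha$ with an explicit $C_\alpha$. For $\overline{\DD^\alpha}$ the argument is completely symmetric after writing $\overline{\DD^\alpha}[\varphi](x)=d_\alpha\int_0^\infty y^{-\alpha}\varphi'(x+y)\,dy$.

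To obtain (\ref{norminequality2}) I would use that the convolutional form (\ref{op:conv}) makes $\DD^\alpha$ commute with $\partial_x$ (differentiation under the integral is trivially justified for $\varphi\in C_c^\infty$), so that $\partial_x\DD^\alpha[\varphi]=\DD^\alpha[\partial_x\varphi]$, and likewise for $\overline{\DD^\alpha}$. Applying the estimate just established with $\varphi$ replaced by $\partial_x\varphi$ yields (\ref{norminequality2}).

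The remaining claims are immediate. For continuity of $t\mapsto\DD^\alpha[\varphi(t,\cdot)]\in L^1(\R)$ at $t_0\in(0,\infty)$, linearity and (\ref{norminequality}) give
\[
\|\DD^\alpha[\varphi(t,\cdot)]-\DD^\alpha[\varphi(t_0,\cdot)]\|_1\leq C_\alpha\bigl(\|\partial_x\varphi(t,\cdot)-\partial_x\varphi(t_0,\cdot)\|_1+\|\varphi(t,\cdot)-\varphi(t_0,\cdot)\|_1\bigr),
\]
and both terms on the right tend to $0$ as $t\to t_0$ by the fundamental theorem of calculus applied in $t$, together with the fact that $\varphi$ and $\partial_x\varphi$ have compact $(t,x)$-support. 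The same argument handles the three other maps (using (\ref{norminequality2}) for the $\partial_x$-versions and the symmetric bounds for $\overline{\DD^\alpha}$). Integrability over $(0,\infty)\times\R$ follows from Fubini: the $L^1_x$ norms are uniformly bounded in $t$ by (\ref{norminequality})--(\ref{norminequality2}), and $\varphi$, $\partial_x\varphi$, $\partial_x^2\varphi$ all have compact $t$-support, so the $L^1_x$ norms vanish outside a bounded $t$-interval.

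No step presents a real obstacle; the only point requiring care is the integration by parts at the tail, which is where the condition $\alpha\in(0,1)$ enters (it ensures both the integrability of $y^{-\alpha}$ near $0$ and of $y^{-\alpha-1}$ near infinity), and the observation that $\partial_x$ commutes with $\DD^\alpha$ and $\overline{\DD^\alpha}$, which is why the bound on $\partial_x\DD^\alpha\varphi$ costs us one extra $x$-derivative on $\varphi$ rather than a fractional one.
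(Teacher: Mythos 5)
Your proposal is correct and follows essentially the same route as the paper: split the kernel integral at a fixed radius, bound the near part by Young's/Fubini using $\|\varphi'\|_1$ (resp. $\|\varphi''\|_1$), integrate by parts in the tail to trade the derivative for $\varphi$ itself plus a boundary translate, and then deduce continuity in $t$ from linearity and the resulting bounds, with integrability over $(0,\infty)\times\R$ following from the compact $t$-support. The only cosmetic difference is that you make explicit the commutation $\partial_x\DD^\alpha[\varphi]=\DD^\alpha[\partial_x\varphi]$, which the paper uses implicitly when writing the formula for $\partial_x\overline{\DD^\alpha}[\varphi]$ with $\varphi''$ under the integral.
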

\begin{proof} We only prove the statements for $\overline{\DD^\alpha}$ and $\partial_x \overline{\DD^\alpha}$, the rest of the proofs are analogous.
  
By the properties of $\varphi$, we can write, for an arbitrary $r>0$,  
\[
 \overline{\DD^\alpha}[\varphi](x)=
d_\alpha \left(
\int^0_{-r}  \frac{\varphi'(x-z)}{|z|^\alpha}\,dz +\int^{-r}_{-\infty} \frac{\varphi'(x-z)}{|z|^\alpha}\,dz
\right),
\]
and
\[
\partial_x \overline{\DD^\alpha}[\varphi](x)=
d_\alpha \left(
\int^0_{-r} \frac{\varphi''(x-z)}{|z|^\alpha}\,dz +\int^{-r}_{-\infty} \frac{\varphi''(x-z)}{|z|^\alpha}\,dz
\right).
\]
However, we notice that, by integration by parts (see also \cite{dlHC}) and that $\varphi \in C_c^\infty(\R)$ we can write:
\begin{equation}\label{alt:opr}
\begin{split}
\overline{\DD^\alpha} [\varphi](x) &= d_\alpha \int^0_{-r}  \frac{\varphi'(x-z)}{(-z)^\alpha}\,dz
- d_{\alpha+1} \int^{-r}_{-\infty}\frac{\varphi(x-z) }{(-z)^{\alpha+1}}\,dz + d_\alpha \frac{\varphi(x+r)}{ r^{\alpha}}
\end{split}
\end{equation}
and
\begin{equation}\label{alt:opr2}
\begin{split}
\partial_x \overline{\DD^\alpha} [\varphi](x) &= d_\alpha \int^0_{-r}  \frac{\varphi''(x-z)}{(-z)^\alpha}\,dz
-d_{\alpha+1} \int^{-r}_{-\infty}\frac{\varphi'(x-z) }{(-z)^{\alpha+1}}\,dz + d_\alpha \frac{\varphi'(x+r)}{ r^{\alpha}}.
\end{split}
\end{equation}
Now, taking $r=1$ for definiteness, applying Young's inequality in the first and second terms of the right-hand side of (\ref{alt:opr}) and of (\ref{alt:opr2}),
 we obtain  (\ref{norminequality}) and (\ref{norminequality2}).
If $\varphi \in C_c^\infty((0,\infty)\times \R)$, then $t \in (0,\infty) \mapsto \varphi'(t,\cdot) \in L^1(\R)$ and 
$t \in (0,\infty) \mapsto \varphi''(t,\cdot) \in L^1(\R)$ are continuous. We use the inequality (\ref{norminequality}) 
and the linearity of $\overline{  \DD^\alpha}$ to get that the function 
$t \in (0,\infty) \mapsto \overline{\DD^\alpha}[\varphi(t,\cdot)]$ is continuous. 
In particular, since $\varphi(t,\cdot)=0$ for $t$ large enough, we get that 
$(t,x) \mapsto \overline{\DD^\alpha}[\varphi(t,\cdot)](x)$ is integrable on $(0,\infty)\times \R$. A similar argument 
is applied to $\partial_x \overline{\DD^\alpha}[\varphi]$ to conclude the proof.
\end{proof}

We can now prove the main theorem of this section.
\begin{theorem}\label{vanishingregularization}
\begin{itemize}
\item[(a)]Let $u_0 \in L^\infty(\R)$. The mild solution to (\ref{Problem2}), $u^\e$, converges, as $\e \to 0$, 
to the entropy solution of (\ref{Problem3}) $u$ in $C([0,T]; L^1_{loc}(\R))$ for all $T>0$.

\item[(b)] Let $u_0 \in L^\infty(\R)\cap BV(\R)$, $u^\e$ be the mild solution to (\ref{Problem2}) and $u$ be the entropy solution of (\ref{Problem3}). Then, for all $t\in [0,T]$, and $\e>0$ small enough there exists a constant $C>0$ such that
\begin{equation}\label{epsilontimevariation}
\|u^\e(t,\cdot)- u(t,\cdot )\|_{1} \leq C \left(\e \,t \right)^{\frac{1}{\alpha+1}}|u_0|_{BV}.
\end{equation}
In particular, for all $T>0$, $\|u^\e - u\|_{C([0,T]; L^1(\R))}= O\left(\e^{\frac{1}{\alpha+1}}\right)$ as $\e \to 0^+$.
\end{itemize}
\end{theorem}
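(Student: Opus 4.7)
The strategy is Kruzhkov's doubling variable technique, combining the viscous entropy inequality of Theorem~\ref{entropy:smooth} for $u^\e$ with the standard Kruzhkov entropy inequality satisfied by the entropy solution $u$ of (\ref{Problem3}). Taking the family $\eta_k(\cdot)=|\cdot-k|$, the first is applied with $k=u(s,y)$ in the variables $(t,x)$ and the second with $k=u^\e(t,x)$ in the variables $(s,y)$; integrating each over the complementary pair of variables and adding yields a four-fold integral inequality involving $|u^\e(t,x)-u(s,y)|$, a non-negative test function $\psi(t,x,s,y)$, and a \emph{single} surviving viscous error of the form $\e\iiiint|u^\e-u|\,\partial_x\overline{\DD^\alpha}[\psi(t,\cdot,s,y)](x)\,dx\,dt\,dy\,ds$, since the $u$-side produces no nonlocal contribution.

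For part (b), one chooses the test function exactly as in the proof of Theorem~\ref{L1:contraction},
\[
\psi(t,x,s,y)=\omega_{\rho_0}\!\bigl((s-t)/2\bigr)\,\omega_\rho\!\bigl((y-x)/2\bigr)\,\varphi\!\bigl((t+s)/2,(x+y)/2\bigr),
\]
with $\varphi(t',x')=\phi_\mu(x')\Theta_R(t')$, but now keeps the spatial mollifier scale $\rho$ as a free parameter to be optimized at the end. A key ingredient is the uniform bound $|u^\e(t,\cdot)|_{BV}\leq|u_0|_{BV}$, obtained by applying Theorem~\ref{L1:contraction} to $u^\e(t,\cdot)$ and its translate $u^\e(t,\cdot+h)$, which both solve the same translation-invariant problem (\ref{Problem2}); the analogous bound for the entropy solution $u$ is classical. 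The limits $\rho_0\to 0^+$ (using the Lipschitz time-regularity of $u^\e-u$ in $L^1(\R)$ inherited from the mild formulation and the BV bound) and $\mu\to\infty$ (using $u^\e(t,\cdot)-u(t,\cdot)\in L^1(\R)$) are taken as in Theorem~\ref{L1:contraction}.

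The main obstacle is the nonlocal estimate at the surviving spatial scale $\rho$. One splits the increment using BV of $u$,
\[
|u^\e(t,x)-u(t,y)|\leq|u^\e(t,x)-u(t,x)|+|u(t,x)-u(t,y)|,
\]
the second piece producing a mollification error bounded by $C\,\rho\,|u_0|_{BV}$. For the remaining nonlocal contribution involving the first piece, integration by parts via Lemma~\ref{integrationbyparts} moves $\partial_x\overline{\DD^\alpha}$ off $\omega_\rho$ onto $|u^\e(t,\cdot)-u(t,\cdot)|$; the self-similar scaling of the operator together with the uniform BV bound on $u^\e-u$ then give a nonlocal error of order $C\,\e\,t\,\rho^{-\alpha}|u_0|_{BV}$. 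Collecting,
\[
\|u^\e(t,\cdot)-u(t,\cdot)\|_1\leq C_1\,\rho\,|u_0|_{BV}+C_2\,\e\,t\,\rho^{-\alpha}|u_0|_{BV},
\]
and minimizing over $\rho>0$ (with optimum $\rho\sim(\e t)^{1/(\alpha+1)}$) yields the rate (\ref{epsilontimevariation}). The delicate point is confirming that the duality argument and the BV bound combine to give exactly the exponent $-\alpha$ (rather than $-(\alpha+1)$) in the nonlocal error, which is what allows the balancing to reproduce the natural diffusion scale of the Riesz--Feller operator.

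For part (a), approximate $u_0\in L^\infty(\R)$ by $u_0^n\in L^\infty(\R)\cap BV(\R)$ with $\|u_0^n\|_\infty\leq\|u_0\|_\infty$ and $u_0^n\to u_0$ in $L^1_{loc}(\R)$. Part (b) gives $u^{\e,n}\to u^n$ in $C([0,T];L^1(\R))$ at rate $(\e t)^{1/(\alpha+1)}|u_0^n|_{BV}$. The combined entropy inequality, applied directly with a spatially compactly supported test function together with a Gronwall-type argument exploiting the Lipschitz bound on $f$ over the common $L^\infty$ range and the $L^1$-boundedness of $\partial_x\overline{\DD^\alpha}$ on bump functions (Lemma~\ref{L1integrable}), yields $L^1_{loc}$ continuous dependence on initial data, uniform in $\e\geq 0$ and $t\in[0,T]$; hence $u^{\e,n}\to u^\e$ and $u^n\to u$ in $L^1_{loc}(\R)$ uniformly in $\e$ and $t$. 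A triangle inequality and diagonal argument in $n\to\infty$, $\e\to 0^+$ then deliver $u^\e\to u$ in $C([0,T];L^1_{loc}(\R))$.
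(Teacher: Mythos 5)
Your overall architecture for part (b) --- doubling of variables with Kruzhkov entropies, the bound $|u^\e(t,\cdot)|_{BV}\le|u_0|_{BV}$ via translation invariance and Theorem~\ref{L1:contraction}, a mollification error $O(\rho)|u_0|_{BV}$ against a nonlocal error $O(\e t\rho^{-\alpha})|u_0|_{BV}$, and optimisation $\rho\sim(\e t)^{1/(\alpha+1)}$ --- is the paper's. But the step on which the whole rate hinges is only asserted, and the mechanism you describe would fail if taken literally: Lemma~\ref{integrationbyparts} transfers the \emph{full} operator $\partial_x\overline{\DD^\alpha}$, of order $1+\alpha$, onto the other factor, and $|u^\e(t,\cdot)-u(t,\cdot)|$ is merely $BV$ in $x$ (the entropy solution is not smooth), so $\partial_x\DD^\alpha$ applied to it is neither defined nor controlled by $|u_0|_{BV}$; moreover, once the operator has left the test function you can no longer invoke ``self-similar scaling'' at scale $\rho$ to produce $\rho^{-\alpha}$. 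What actually works, and is what the paper does, is to integrate by parts in $x$ only once, putting $\partial_x$ on $|u^\e(t,x)-u(s,y)|$ --- legitimate because $u^\e$ is smooth, yielding $|\partial_x u^\e|$ and hence the $BV$ bound --- while keeping the order-$\alpha$ operator $\overline{\DD^\alpha}$ on $\omega_\rho(y-\cdot)\zeta_M$; a two-scale splitting of that fractional integral (cf.\ (\ref{integrand:est})) gives $\int_\R|\overline{\DD^\alpha}[\omega_\rho(y-\cdot)\zeta_M]|\,dy\lesssim r^{1-\alpha}/\rho+r^{-\alpha}$, and only after optimising also in $r\sim\rho$ does the $\rho^{-\alpha}$ you need appear. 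You explicitly flag the exponent $-\alpha$ versus $-(\alpha+1)$ as ``the delicate point'' but do not resolve it; this is the heart of the proof. A secondary issue: your symmetric test function with $\phi_\mu\to1$ requires knowing a priori that $u^\e(t,\cdot)-u(t,\cdot)\in L^1(\R)$, which you assert without justification (it is true, but needs, e.g., a finite first moment of $K$, which the decay stated in Proposition~\ref{K:prop} does not give); the paper sidesteps this by working with the cut-offs $\zeta_M(|x|+Lt)$ on expanding balls and letting $M\to\infty$ only at the very end.

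Part (a) also has a gap, and here your route genuinely differs from the paper's. The paper proves (a) directly for $u_0\in L^\infty$: the doubling error $w^B(\rho,\mu)$ involves only the $L^1_{loc}$ time--space continuity of the entropy solution, hence vanishes as $\rho,\mu\to0$ with no rate, while the viscous term is $O(\e)$; no $BV$ approximation is needed. Your reduction to $BV$ data requires an $L^1_{loc}$ stability estimate between the two viscous solutions $u^\e$ and $u^{\e,n}$, uniform in $\e$. Theorem~\ref{L1:contraction} does not provide it, since for general $u_0\in L^\infty$ one cannot arrange $u_0-u_0^n\in L^1(\R)$ (only local smallness), so you must rerun the doubling argument with a spatial cut-off; the nonlocal term tested against the cut-off then produces an unavoidable additive $O(\e)$ error whose constant depends on $\|u_0\|_\infty$ and not on the data difference, so ``continuous dependence uniform in $\e\ge0$'' as stated is too strong (the $O(\e)$ version still closes your diagonal argument), and the ``Gronwall-type argument'' where all of this would live is not spelled out. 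In short, the stability lemma you invoke is essentially the same doubling-with-cutoff estimate by which the paper proves (a) directly, so the detour through part (b) hides rather than removes the work.
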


\begin{proof}
First, we recall that the entropy $u$ solution of (\ref{Problem3}) is in $C([0,T]; L^1_{loc}(\R))$ and satisfies (\ref{entropy:eps})
 with $\e=0$ (see \cite{Serre}). 


For all $\e>0$ let $u^\e \in C_b^\infty((0,\infty)\times \R)$ be the regular mild solution of (\ref{Problem2}) 
with the same initial condition for all $\e$. Then, each $u^\e$ satisfies Theorem~\ref{entropy:smooth}.
These inequalities can be written for test functions of four variables (thus doubling the variables), 
$\psi(t,x,s,y)\in C_c^\infty((0,\infty)\times \R\times (0,\infty)\times \R)$. 
Indeed, we have for two entropy pairs $(\eta,q)$ and $(\eta_0,q_0)$, 
\begin{align*}
\int_{0}^{\infty}\!\int_{\R} \! \int_{0}^{\infty}\!\int_{\R} 
\Big( &
\eta(u^\e(t,x)) \partial_t\psi(t,x,s,y) + q(u^\e(t,x)) \partial_x \psi(t,x,s,y) 
	\\
&+ \e \, \eta(u^\e(t,x)) \partial_x \overline{\DD^\alpha}[\psi(t,\cdot,s,y)](x)\\
&+
 \eta_0(u(s,y)) \partial_s\psi(t,x,s,y) + q_0(u(s,t)) \partial_y \psi(t,x,s,y) 
\Big) dx\, dt\, dy\, ds  \geq 0,
\end{align*}

where we have applied Fubini's theorem in the last integral. We take the Kruzhkov entropies 
 $\eta(u^\e(t,x))= |u^\e(t,x)-u(s,y)|$ and $\eta_0(u(s,y))= |u^\e(t,x)-u(s,y)|$, and  
a test function of the form
\[
 \psi(t,x,s,y) =  \theta_\mu(s-t)\omega_\rho (y-x)\varphi(t,x)
 \]
where, for $\rho>0$, we take $\omega_\rho \in C_c^\infty(\R)$, as in the proof of Theorem~\ref{L1:contraction}. In particular, $\int_\R\omega_\rho(y) dy=1$ and 
$\mbox{supp}(\omega_\rho) \subset (-\rho,\rho)$. For $\mu>0$, we take $\theta_\mu \in C_c^\infty(\R)$ such that $\int_{0}^{\infty}\theta_\mu(s) ds=1$ 
with $\mbox{supp}(\theta_\mu) \subset (0,\mu)$, and that $\mu\theta_\mu(\mu/2)=1$ (for example taking $\theta_\mu(x)=\omega_{\mu/2}(x+\mu/2)$). Observe that then, for any $x\in\R$ and $t>0$, $\int_{0}^{\infty}\theta_\mu(s-t)ds= \int_{\R}\omega_\rho(y-x) dy=1$.
We take $\varphi \in C_c^\infty((0,\infty)\times\R)$ to be a non-negative function that will be specified later.

With these choices we get
\begin{equation}\label{doublingequation1}
\begin{split}
\int_{0}^{\infty}\!\int_{\R}\!\int_{0}^{\infty}\!\int_{\R} & \Big( 
|u^\e(t,x)-u(s,y)| \theta_\mu(s-t) \omega_\rho(y-x) \partial_t \varphi(t,x) \\
&+ \sign(u^\e(t,x)-u(s,y))\left(f(u^\e(t,x))- f(u(s,y))\right)
 \theta_\mu(s-t)\omega_\rho(y-x)\partial_x \varphi(t,x) \\
& +\e   |u^\e(t,x)-u(s,y)|\theta_\mu(s-t)
  \partial_x \overline{\DD^\alpha}[\omega_\rho(y-\cdot) \varphi(t,\cdot)](x)  \Big)
 dx\,dt\,dy\,ds \geq 0.
\end{split}
\end{equation}

 We then estimate the following terms separately:
\begin{equation}\label{doublingequation2}
 \begin{split}
 I_1&:=\int_{0}^\infty\!\int_{\R}\!\int_{0}^\infty\!\int_{\R} |u^\e(t,x)-u(s,y)| \theta_\mu(s-t) \omega_\rho(y-x)  \partial_t \varphi(t,x) dx\,dt\,dy\,ds,  \\
 I_2&:= \int_{0}^\infty\!\int_{\R}\!\int_{0}^\infty\!\int_{\R} \sign(u^\e(t,x)-u(s,y))\left(f(u^\e(t,x))- f(u(s,y))\right)
 	\\
 & \qquad\qquad\qquad\qquad\qquad\qquad\cdot\theta_\mu(s-t) \omega_\rho(y-x)  \partial_x \varphi(t,x)dx\,dt\,dy\,ds, \\
 I_3&:= \e \int_{0}^\infty\!\int_{\R}\!\int_{0}^\infty\!\int_{\R} \ |u^\e(t,x)-u(s,y)|\theta_\mu(s-t)
  \partial_x \overline{\DD^\alpha}[\omega_\rho(y-\cdot) \varphi(t,\cdot)](x)  dx\,dt\,dy\,ds.
 \end{split}
 \end{equation}
For that we proceed as in \cite{Droniou1}. Suppose that for every $T>0$, $\mbox{supp}(\varphi) \subset (0,T]\times B$, for some ball $B \subset \R$, then
 \begin{equation}\label{I1inequality}
 \begin{split}
 & \left|I_1 - \int_{0}^\infty\!\int_{\R} |u^\e(t,x) - u(t,x)| \partial_t \varphi(t,x) dx\, dt \right|\\ 
\leq &  \int_{0}^T\!\int_{\R}\!\int_{0}^\infty\!\int_{\R}  
\Big| |u^\e(t,x)-u(s,y)|-|u^\e(t,x)-u(t,x)| \Big|
 \omega_\rho(y-x) \theta_\mu(s-t) \,|\partial_t \varphi(t,x)|dx\,dt\,dy\,ds \\
 \leq& \|\partial_t \varphi\|_{L^1(0,T; L^\infty(\R))} 
\sup_{0<t<T}\left\{\int_{0}^\infty\!\int_{\R}\!\int_B |u(t,x)- u(s,y)| \omega_\rho(y-x) \theta_\mu(s-t) dx\,dy\,ds\right\} ,
 \end{split}
 \end{equation}
where we have used that $\int_{0}^\infty\theta_\mu(s-t) ds=\int_{\R}\omega_\rho(y-x) dy=1$.

For the second integral, we apply that $f$ is locally Lipschitz continuous, and that $\|u^\e\|_{\infty}$, $\|u\|_{\infty}\leq \|u_0\|_{\infty}$.  
Thus, there exists a constant $L(\|u_0\|_{\infty})>0$ such that
 \[
\left|\sign(u^\e(t,x)-u(s,y)) \left(f(u^\e(t,x)) - f(u(s,y))\right)\right| \leq L(\|u_0\|_{\infty})\, |u^\e(t,x)-u(s,y)|,
\]
and therefore, by the triangle inequality after adding and subtracting $u(x,t)$ appropriately, we conclude that
\begin{equation}\label{I2inequality}
 \begin{split} |I_2| 
&\leq L(\|u_0\|_\infty) \left( \int_{0}^{\infty}\!\int_{\R} |u^\e(t,x) -u(t,x)| \, |\partial_x\varphi(t,x)| dx\,dt \right.
\\
&\left.+  \|\partial_x \varphi\|_{L^1(0,T; L^\infty(\R))}\sup_{0<t<T}\left\{ \int_{0}^{\infty}\!\int_{\R}\!\int_B |u(t,x)- u(s,y)| \omega_\rho(y-x) \theta_\mu(s-t) dx\,dy\,ds\right\}\right).
\end{split}
\end{equation}

For the last integral $I_3$ we get that
 \begin{equation}\label{I3inequality}
|I_3| \leq 2\e \ \|u_0\|_\infty \int_{\R}\!\int_{0}^{\infty}\int_{\R}
\left| \partial_x \overline{\DD^\alpha}[\omega_\rho(y-\cdot) \varphi(t,\cdot)](x)\right|  dx\,dt\,dy 
\leq \e C,
 \end{equation}
where $C$ is a constant proportional to
$$
\|u_0\|_\infty \sup_{0\leq t\leq T} \max\{\|\varphi(t,\cdot)\|_1,\|\partial_x\varphi(t,\cdot)\|_1,\|\partial_x^2\varphi(t,\cdot)\|_1\} (1+\rho).
$$
This is because $\varphi$ has compact support in $(0,T]\times \R$, and then $y\in[-a-\rho,a+\rho]$ for some $a>0$. Also Lemma~\ref{L1integrable} applies.

For brevity, and in view of (\ref{I1inequality}) and (\ref{I2inequality}), let us introduce the notation:
\begin{equation}\label{def:w:B}
w^B(\rho,\mu):=\sup_{0<t<T}\left\{ \int_{0}^{\infty}\!\int_{\R}\!\int_B |u(t,x)- u(s,y)| \omega_\rho(y-x) \theta_\mu(s-t) dx\,dy\,ds\right\}.
\end{equation}
We observe that, after the change of variables $z=y-x$ and $r=s-t+\mu/2$, leaving $x$ and $t$ unchanged, we get
\begin{equation}\label{est1:w:B}
w^B(\rho,\mu) = \sup_{0<t<T}\left\{\frac{1}{\rho}\frac{2}{\mu} \int_{0}^{\infty}\!\int_\R \!\int_B |u(t,x)- u(r+t-\mu/2,z+x)| \omega\left(\frac{z}{\rho}\right) \omega\left(\frac {2r}{\mu}\right) dx\,dz\,dr\right\},
\end{equation}
a form which is better suited to take limits of the parameters $\rho$ and $\mu$ to $0$, as we shall need to do below.

With this notation and summarising, the inequalities (\ref{I1inequality}), (\ref{I2inequality}) and (\ref{I3inequality}) applied in (\ref{doublingequation1}), 
give that there exist $L$, $C'$, $C>0$ such that
 \begin{equation}\label{viscous:limit1}
\int_{0}^{\infty}\!\int_{\R} |u^\e(t,x)-u(t,x)|\left(\partial_t \varphi(t,x) + L |\partial_x \varphi(t,x)|\right) dx\,dt +
 C' w^B(\rho,\mu) + \e \ C \geq 0,
 \end{equation}
 where 
\begin{equation}\label{viscous:limit1:constants}
\begin{split}
L &\ \propto \ \|u_0\|_\infty,\\
C'& \ \propto \  \|u_0\|_\infty\sup_{0\leq t\leq T}\|\partial_x\varphi(t,\cdot)\|_1,\\
C &\ \propto \ \|u_0\|_\infty \sup_{0\leq t\leq T} \max\{\|\varphi(t,\cdot)\|_1,\|\partial_x\varphi(t,\cdot)\|_1,\|\partial_x^2\varphi(t,\cdot)\|_1\},
\end{split}
\end{equation}
and none of these three constants depend on $\rho$ and $\mu$.

We now choose a $\varphi$ that is close to a solution of the factor $\partial_t \varphi(t,x) + L |\partial_x \varphi(t,x)|$. 
For any $T>0$, let $M>0$ be such that $M>L T$ and let also $\zeta_M \in C_c^\infty([0,\infty))$
 be non-increasing, with values in $[0,1]$ where $\zeta_M \equiv 1$ on $[0,M]$ and $\mbox{supp}(\zeta_M) \subset [0,M+1]$. 
We let also $\Theta \in C_c^\infty(0,T)$ with values in $[0,1]$, the precise choice of functions will be specified later. Then we take 
\[
\varphi(t,x)= \zeta_M(|x|+ L t)\Theta(t).
\]
Observe, that this is a non-negative function, that belongs to $C_c^\infty((0,\infty)\times\R)$ 
(the function $\Theta$ has its support in $[0,T)$ and $(t,x) \mapsto \zeta_M(|x| + L t)$ is regular on 
$[0,T)\times \R$ since, in a neighbourhood of $[0,T]\times\{0\}$, $\zeta_M(|x|+ L t)=1$) 
and $\mbox{supp}(\varphi) \subset (0,T)\times(-M-1,M+1)$, so we can take $B=(-M-1,M+1)$. 
This test function satisfies
 \begin{eqnarray*}
 \partial_t \varphi(t,x) &=& L \zeta_M'(|x| + L t) \Theta(t) + \zeta_M(|x|+ L t) \Theta'(t), \\
 |\partial_x \varphi(t,x)| &=& \left| \zeta_M'(|x|+ L t) \sign(x) \Theta(t)\right| = -\zeta_M'(|x|+ L t) \Theta(t),
 \end{eqnarray*}
 the last identity is true because $\zeta_M$ is non-increasing.

Now, substituting this into (\ref{viscous:limit1}) gives
 \begin{equation}\label{viscous:limit2}
\int_{0}^{T}\!\int_{\R} |u^\e(t,x)-u(t,x)| \zeta_M(|x| + L t) \Theta'(t)  dx\,dt
 + C' w^{B}(\rho,\mu) + \e \ C \geq 0.
 \end{equation}
With this choice of $\varphi$, the constants $C'$ and $C$ are of the form
\[
C \ \propto \ \max\{1,\|\zeta_M''\|_\infty\}\ 
\mbox{and}
\ 
C' \ \propto \  \int_0^T |\Theta(t)| dt.
\]

We now specify $\Theta(t)$. For every $t_0\in[0,T)$ we take a one parameter family of functions $\Theta(t)=\Theta_{t_0,\beta}(t)$ in the proofs below, where $\beta<T-t_0$, namely
  \begin{equation}\label{def:THETA}
  \Theta_{t_0,\beta}(t)=\int_t^\infty\theta_\beta(s-t_0)\,ds.
  \end{equation}

{\it Proof of (a)}
We first observe that applying the Lebesgue differentiability theorem to (\ref{est1:w:B}), we obtain that 
\begin{equation}\label{localw}
w^B(\rho,\mu)\to 0\quad \mbox{as}\quad (\rho,\mu)\to (0,0).
\end{equation}
We can even take $\mu=\rho$ and take the limit $\rho\to0^+$ in (\ref{viscous:limit2}). Then 
 \[
\int_{0}^{T}\!\int_{-(M+1)-Lt}^{M+1-Lt} |u^\e(t,x)-u(t,x)| \,\zeta_M(|x| + L t) \Theta'(t)  dx\,dt + \e \ C \geq 0.
 \]
Now we take for all $t_0\in[0,T]$, $\Theta(t)=\Theta_{t_0,\beta}(t)$ as in (\ref{def:THETA}). In this way $\Theta_{t_0,\beta}'(t)=-\theta_\beta(t-t_0)\leq 0$, and we have
\begin{equation}\label{viscous:limit3}
-\int_{0}^{T}\!\int_{-M-LT}^{M-LT} |u^\e(t,x)-u(t,x)| \,\zeta_M(|x| + L t) \theta_\beta(t-t_0)  dx\,dt + \e \ C \geq 0.
 \end{equation}
But $\zeta_M(|x| + L t)=1$ if $x\in (-M-LT,M-LT)$, and taking the limit $\beta\to 0^+$ in (\ref{viscous:limit3}) 
we find for all $t_0\in[0,T]$  (again using the Lebesgue differentiability theorem) that 
\begin{equation}\label{l1:estim:1}
\int_{-M-LT}^{M-LT} |u^\e(t_0,x)-u(t_0,x)| \zeta_M(|x| + L t_0)  dx \leq \e \ C.
 \end{equation}
The estimate for $t_0=T$ is obtained by letting $t_0 \to T$ in (\ref{l1:estim:1}).
An taking the limit $\e\to 0^+$ shows that $u^\e \to u$ in $C([0,T]; L^1_{loc}(\R))$ for all $T>0$.
Observe that $C$ depends on $\zeta''_M$, and for any $T$, and thus for any $M>LT$, we can choose $\zeta_M$ such that $\|\zeta''_M\|_\infty=2$, for instance.

{\it Proof of (b)}
We now leave the term $I_3$ unchanged, and we proceed as above for the rest of the argument, so that for all $t_0\in[0,T]$, instead of at (\ref{l1:estim:1}), we arrive at
\[
\begin{split}
& \int_{-M-L T}^{M-LT} |u^\e(t_0,x) - u(t_0,x)| \ dx \leq  C' w^{B}(\rho,\mu)\\
&+ \e \lim_{\beta\to 0}\int_{0}^{\infty}\!\int_{\R}\!\int_{0}^{\infty}\int_{\R} \Theta_{t_0,\beta}(t)\theta_\mu(s-t) |u^\e(t,x)-u(s,y)| 
\partial_x \overline{\DD^\alpha}[\omega_\rho(y-\cdot)\zeta_M(|\cdot|+Lt)](x) dx\,dt\,dy\,ds,
\end{split}
\]
then, observing that $\lim_{\beta\to 0}\Theta_{t_0,\beta}\leq \chi_{[0,t_0]}$, we obtain \begin{equation}\label{starting:point}
\begin{split}
& \int_{-M-L T}^{M-LT} |u^\e(t_0,x) - u(t_0,x)| \ dx \leq  C' w^{B}(\rho,\mu)\\
&+\e \int_{0}^{\infty}\!\int_{\R}\!\int_{0}^{t_0}\int_{\R} \theta_\mu(s-t) |u^\e(t,x)-u(s,y)| 
\partial_x \overline{\DD^\alpha}[\omega_\rho(y-\cdot)\zeta_M(|\cdot|+Lt)](x) dx\,dt\,dy\,ds.
\end{split}
\end{equation}

We now observe that the limit $\mu\to 0$ in (\ref{est1:w:B}) gives
\[
\begin{split}
&\lim_{\mu\to 0}(w^B(\rho,\mu))=
\frac{1}{\rho}\sup_{0<t<T}\left\{\int_\R \omega\left(\frac{z}{\rho}\right) \int_B|u(t,x)-u(t,z+x)|dx\,dz\right\}\\
&\leq
\frac{1}{\rho}\sup_{0<t<T}\left\{|u(t,\cdot)|_{BV}\int_{-\rho}^\rho \omega\left(\frac{z}{\rho}\right)|z|dz\right\}\\
&\leq\sup_{0<t<T}\left\{|u(t,\cdot)|_{BV}\int_{-\rho}^\rho \omega\left(\frac{z}{\rho}\right)dz\right\}\leq \rho\sup_{0<t<T}|u(t,\cdot)|_{BV}.
\end{split}
\]
Since $u_0 \in L^\infty(\R)\cap BV(\R)$, the entropy solution of (\ref{Problem3}) satisfies that $|u(t,\cdot)|_{BV} \leq |u_0|_{BV}$ (see e.g. \cite{Serre}), and we have
\[\lim_{\mu\to 0}(w^B(\rho,\mu))\leq \rho|u_0|_{BV},\]
Thus, the limit $\mu\to0$ of (\ref{starting:point}) is
\begin{equation}\label{starting:point2}
\begin{split}
&\quad \int_{-M-L T}^{M-L T} |u^\e(t_0,x) - u(t_0,x)| \ dx \leq C'\rho  |u_0|_{BV}  \\
&+\e \lim_{\mu\to 0} \int_{0}^{\infty}\,\int_{\R}\,\int_{0}^{t_0}\,\int_{\R} \theta_\mu(s-t) |u^\e(t,x)-u(s,y)| 
\partial_x \overline{\DD^\alpha}[\omega_\rho(y-\cdot)\zeta_M(|\cdot|+Lt)](x) dx\,dt\,dy\,ds.
\end{split}
\end{equation}

Let us get an estimate on the second term of the right-hand side of (\ref{starting:point2}). We integrate by parts with respect to $x$, then we estimate the absolute value:
\[
\begin{split}
\left| \int_{0}^{\infty}\,\int_{\R}\,\int_{0}^{t_0}\,\int_{\R} \theta_\mu(s-t) |u^\e(t,x)-u(s,y)| 
\partial_x \overline{\DD^\alpha}[\omega_\rho(y-\cdot)\zeta_M(|\cdot|+Lt)](x) dx \,dt\,dy\,ds \right|
\\
\leq \int_{0}^{\infty}\,\int_{\R}\,\int_{0}^{t_0}\, \int_{\R} \theta_\mu(s-t) \left|\partial_x u^\e(t,x)\right| 
\left|\overline{\DD^\alpha}[\omega_\rho(y-\cdot)\zeta_M(|\cdot|+Lt)](x)\right|  dx\,dt\,dy\,ds.
\end{split}
\]
The first two factors of the integrand do not depend on $y$, so we can integrate with respect to $y$ the remaining one. This one reads, applying Lemma~\ref{L1integrable} and Fubini's theorem, and for some arbitrary $r>0$:
\begin{equation}
\begin{split}
& \quad \int_{\R} \left|\overline{\DD^\alpha}[\omega_\rho(y-\cdot)\zeta_M(|\cdot|+Lt)](x)\right| \, dy \\
&\leq \int_{\R}\left\{\int_{0}^{r} |z|^{-\alpha} \left( |\omega_\rho'(y-x+z)| \,|\zeta_M(|x-z|+Lt)| + 
\omega_\rho(y-x+z)\,|\partial_z\zeta_M(|x-z|+Lt)|\right) \, dz\right.\\
&\quad + \left. \left| \int_{r}^{\infty} \alpha z^{-\alpha-1} \ 
\omega_\rho(y-x+z)\zeta_M(|x-z|+Lt) \ dz \right|\right\} \, dy \\
& \leq \Big(\|\omega_\rho'\|_1 \,\|\zeta_M\|_\infty +  \|\omega_\rho\|_1 \, \|\zeta_M'\|_\infty\Big) \frac{r^{-\alpha+1}}{1-\alpha} + \alpha^2  r^{-\alpha}\|\omega_\rho\|_1\, \|\zeta_M\|_\infty .
\end{split}
\end{equation}
We now use that $\|\omega_\rho\|_1 = 1$, $\|\omega_\rho'\|_1 \propto \frac{1}{\rho}$ and $\|\zeta_M\|_\infty=1$, $\|\zeta_M'\|_\infty < C$ for some $C$. This gives that there exist $C_1$, $C_2>0$ independent of $\e$, $\rho$, $\mu$ and $r$, such that
\begin{equation}\label{integrand:est}
\int_{\R} \left|\overline{\DD^\alpha}[\omega_\rho(y-\cdot)\varphi(\cdot,t)](x)\right|(x) \ dy 
\leq C_1 \left(\frac{r^{-\alpha+1}}{\rho}+ r^{-\alpha+1}\right) + C_2 r^{-\alpha}.
\end{equation}

By this last inequality, (\ref{starting:point2}) and the fact that $\int_{0}^{\infty}\theta_\mu(s-t)\ ds=1$, we find, for all $T>0$ and $M>L T$ there is $C_3>0$, such that
\begin{equation}\label{109:old}
\begin{split}
&\int_{-M-L T}^{M-L T} |u^\e(t_0,x) - u(t_0,x)| \, dx \leq C'\rho |u_0|_{BV} \\
&\quad+ \e  C_3\int_0^{t_0}|u^\e(t,\cdot)|_{BV}dt\, \left(\frac{r^{-\alpha+1}}{\rho}+ r^{-\alpha+1} +  r^{-\alpha}\right).
\end{split}
\end{equation}
On the other hand, by the $L^1$-contraction property, Theorem~\ref{L1:contraction}, and the translation invariance of the equation in (\ref{Problem1}), we have (see \cite{Serre})  
\[
|u^\e(t,\cdot)|_{BV} < |u_0|_{BV} , 
\]
which applied to (\ref{109:old}) gives
\begin{equation}\label{109}
  \int_{-M-L T}^{M-L T} |u^\e(t_0,x) - u(t_0,x)| \, dx \leq C'\rho |u_0|_{BV} 
  + \e  C_3\,t_0\,|u_0|_{BV} \, \left(\frac{r^{-\alpha+1}}{\rho}+ r^{-\alpha+1} + r^{-\alpha}\right).
\end{equation}

We then let $M\to \infty$ in (\ref{109}). We may take $\rho<1/2$, for instance, then there exists a constant $C>0$, such that, for all $t_0 \in [0,T]$,
\[
\|u^\e(t_0,\cdot) - u(t_0,\cdot)\|_{1} \leq C |u_0|_{BV} \left(\rho + 
\e \, t_0 \left( \frac{r^{-\alpha+1}}{\rho} + r^{-\alpha}\right)\right).
\]
Minimising the right-hand side of this inequality with respect to the variables $\rho$ and $ r$, we obtain that the minimum is attained at $\rho=(\e t_0)^{1/(\alpha+1)} (\alpha/1-\alpha)^{(1-\alpha)/(1+\alpha)}$ and $r=\rho\alpha/(1-\alpha)$, then for all $t_0 \in [0,T]$ we obtain (\ref{epsilontimevariation}) with $t_0$ replaced by $t$.
Taking the supremum over $t\in(0,T)$ we obtain the last assertion.
\end{proof}


\section{The travelling wave problem}\label{sec:TW}
In this section we study the vanishing viscosity limit for the travelling wave problem. In particular we consider 
solutions of (\ref{Problem1}) of the form  $u(t,x) = \phi(\xi) $ with $\xi = x - ct$ that connect different far-field values
 $\phi_-$, $\phi_+\in\mathbb{R}$. Then, the travelling wave problem reads:
\begin{equation}\label{TWProblem}
\begin{cases}
-c(\phi(\xi) - \phi_-) + f(\phi(\xi)) - f(\phi_-) = \DD^\alpha\phi(\xi), \\ 
\displaystyle\lim_{\xi \to -\infty}{\phi(\xi)}= \phi_- \ \text{ and } \displaystyle\lim_{\xi \to \infty}{\phi(\xi)}= \phi_+,
\end{cases}
\end{equation}
where after substitution of the new variables the equation has been integrated once using one of the far-field values. 
Moreover, integration over $\mathbb{R}$ gives that the wave speed $c$ must be given by the Rankine-Hugoniot condition
\begin{equation}\label{R-H}
c=\frac{f(\phi_+)- f(\phi_-)}{\phi_+ - \phi_-}>0.
\end{equation}

It is convenient to introduce the following notation for the left-hand side of the equation in (\ref{TWProblem})
\begin{equation}\label{h:def}
h(\phi):=-c(\phi(\xi) - \phi_-) + f(\phi(\xi)) - f(\phi_-).
\end{equation}
We further assume that $f$ is a convex function and that $\phi_->\phi_+$, then $h(\phi_-)=h(\phi_+)=0$ and 
\[
h'(\phi_-)>0 \quad \mbox{and} \quad h'(\phi_+)<0.
\] 
Existence of this problem has been established in \cite{AHS}. In particular, the authors obtain, under the more general assumption that the flux function $f$ 
is genuinely nonlinear (see also \cite{AHS2}), the following result: 
\begin{theorem}[Achleitner, Hittmeir, Schmeiser \cite{AHS}]\label{TWexis}
There exists a solution $\phi\in C^2_b(\mathbb{R})$ of (\ref{TWProblem}) such that
\[
\phi_+ \leq  \phi(\xi)\leq \phi_- \quad \mbox{for all}  \quad \xi\in\mathbb{R}
\]
and $\phi'(\xi)<0$, that is unique (up to a shift in $\xi$) among all $\phi\in \phi_-+ H^3(-\infty,0) \cap C_b^3(-\infty,0)$.
\end{theorem}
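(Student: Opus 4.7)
The plan is to construct a solution by monotone iteration on a truncated problem and pass to the limit, then to settle uniqueness via a tail expansion combined with a weighted energy argument. To simplify, I set $\psi(\xi):=\phi_- - \phi(\xi)$; since $\DD^\alpha$ annihilates constants, (\ref{TWProblem}) becomes
\[
\DD^\alpha \psi(\xi) = g(\psi(\xi)), \qquad \psi(-\infty)=0,\quad \psi(+\infty)=\phi_- -\phi_+,
\]
with $g(\psi):=-h(\phi_- - \psi)$. The Rankine--Hugoniot relation (\ref{R-H}) together with the convexity of $f$ give $g>0$ on $(0,\phi_- -\phi_+)$ and $g'(0)=h'(\phi_-)>0$, $g'(\phi_- -\phi_+)=-h'(\phi_+)>0$. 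A short computation from (\ref{frac:der}) (change of variables $z\mapsto \xi-z$ and the definition of $\Gamma$) yields the useful identity $\DD^\alpha[e^{\lambda\,\cdot}](\xi)=\lambda^\alpha e^{\lambda\xi}$ for $\lambda>0$, so the linearisation of the equation at $-\infty$ has the unique admissible exponential mode $\lambda_- = h'(\phi_-)^{1/\alpha}>0$. This predicts the exponential tail and motivates working in the weighted space $\phi_- + e^{\lambda_-\xi} H^3(-\infty,0)$.

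For existence, I would fix $R>0$ and consider the problem $\DD^\alpha \psi_R = g(\psi_R)$ on $(-R,R)$, extending $\psi_R$ by $0$ on $(-\infty,-R)$ and by $\phi_- -\phi_+$ on $[R,\infty)$ inside the convolution (\ref{frac:der}). By Lemma~\ref{equiv:repre} the constants $0$ and $\phi_- -\phi_+$ are a sub- and a super-solution, and the maximum-principle idea of Lemma~\ref{negativity} makes the Picard iteration $\DD^\alpha \psi_R^{(n+1)}=g(\psi_R^{(n)})$ preserve the order when started from a monotone linear interpolant; one gets $0\leq \psi_R\leq \phi_- -\phi_+$ and, by applying the same comparison to translates, $\psi_R'\geq 0$ on $(-R,R)$. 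Normalise by translating so that $\psi_R(0)=(\phi_- -\phi_+)/2$. Bootstrap regularity for the nonlocal operator (analogous to the $L^\infty$--$C^{1,\beta}$ estimates used throughout Section~\ref{sec:pre} and Proposition~\ref{local:existence}) gives bounds on $\psi_R''$ locally uniform in $R$, so Arzelà--Ascoli together with a diagonal extraction yields a limit $\psi\in C_b^2(\R)$ satisfying $\DD^\alpha\psi = g(\psi)$ pointwise. Monotonicity and boundedness force the limits $\psi(\pm\infty)$ to exist and, by plugging into the equation and using dominated convergence, they must be zeros of $g$; the normalisation rules out $\psi\equiv 0$ or $\psi\equiv \phi_- -\phi_+$. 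Strict monotonicity $\phi'<0$ then follows from evaluating the integral representation (\ref{integralformula1}) at any hypothetical interior zero of $\psi'$.

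For uniqueness in $\phi_- + H^3(-\infty,0)\cap C_b^3(-\infty,0)$, I would exploit the fact that $\lambda^\alpha = h'(\phi_-)$ has only one positive real root, so any two solutions $\phi_1,\phi_2$ admit tail expansions $\phi_j(\xi)=\phi_- + A_j e^{\lambda_-\xi}+o(e^{\lambda_-\xi})$ as $\xi\to-\infty$, with $A_j<0$. Shifting $\phi_2$ I may assume $A_1=A_2$, so that $w:=\phi_1-\phi_2$ decays strictly faster than $e^{\lambda_-\xi}$ at $-\infty$ and satisfies $\DD^\alpha w = b(\xi) w$ with $b\in L^\infty$. A weighted energy estimate on $(-\infty,0)$ combining the non-negativity of $\int w \,\DD^\alpha w$ (seen via Plancherel and Lemma~\ref{integrationbyparts}, using the sign of the real part of the symbol $(i\xi)^\alpha$) with a Grönwall-type bound against the assumed super-exponential decay forces $w\equiv 0$ on $(-\infty,0)$. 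The causal (left-sided) structure of $\DD^\alpha$ then propagates $w\equiv 0$ to all of $\R$.

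The main obstacle I expect is the behaviour at $+\infty$ in the existence argument: because $\DD^\alpha$ integrates over the entire left half-line, the asymptotic problem at $+\infty$ is non-local and cannot be reduced to a standard ODE phase-plane analysis, and the true decay rate is only algebraic (as Section~\ref{sec:TW} eventually quantifies). Ruling out a stagnation of the limit $\psi$ at an interior plateau therefore requires a dedicated argument, either a Lyapunov functional tailored to the non-symmetric operator or a bootstrap exploiting that $g(\psi)>0$ in the open interval forces $\DD^\alpha\psi$ to stay bounded away from $0$, contradicting $\psi\to$ const.
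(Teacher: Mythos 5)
Note first that the paper does not prove Theorem~\ref{TWexis} at all: it is quoted from \cite{AHS} (with the regularity upgrade referred to \cite{ACH}, \cite{CA}), so your attempt has to be judged on its own merits rather than against an in-paper argument.

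Judged that way, the existence construction has a structural flaw that I think is fatal. The operator $\DD^\alpha$ in (\ref{frac:der}) is one-sided (causal): $\DD^\alpha\psi(\xi)$ only sees $\psi$ on $(-\infty,\xi]$. Consequently your truncated problem, with $\psi_R\equiv 0$ on $(-\infty,-R]$, is not a boundary-value problem but a fractional \emph{initial} value problem: on $(-R,R)$ the equation $\DD^\alpha\psi_R=g(\psi_R)$ is exactly the Caputo equation based at $-R$, equivalent to the Abel--Volterra equation $\psi_R(x)=\frac{1}{\Gamma(\alpha)}\int_{-R}^x(x-s)^{\alpha-1}g(\psi_R(s))\,ds$. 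Since $g$ is locally Lipschitz and $g(0)=0$, its unique solution is $\psi_R\equiv 0$; the extension by $\phi_--\phi_+$ on $[R,\infty)$ never enters the operator, so there is no mechanism to impose the state at the right end, and the normalisation $\psi_R(0)=(\phi_--\phi_+)/2$ is simply unattainable. The subsequent compactness/diagonal limit would therefore produce the trivial constant, not a wave. (Relatedly, the claim that the plain Picard iteration preserves order is unsupported: $g$ is not monotone on $(0,\phi_--\phi_+)$, and note the sign slip $g'(\phi_--\phi_+)=h'(\phi_+)<0$, not $-h'(\phi_+)>0$.) The known proof in \cite{AHS} works in the opposite logical direction precisely because of this causality: one uses the unstable exponential mode $e^{\lambda\xi}$, $\lambda=h'(\phi_-)^{1/\alpha}$ (your identity $\DD^\alpha[e^{\lambda\cdot}]=\lambda^\alpha e^{\lambda\cdot}$ is correct and is the starting point), to build, by a fixed point in a weighted space on a half-line $(-\infty,\xi_0)$, the one-parameter (shift) family of solutions leaving $\phi_-$; global forward continuation then follows from the Volterra structure together with a priori bounds and monotonicity, and convergence to $\phi_+$ at $+\infty$ (only algebraic, as Section~\ref{sec:TW} quantifies) is obtained from the entropy/convexity structure. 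The uniqueness statement in the class $\phi_-+H^3(-\infty,0)\cap C_b^3(-\infty,0)$ comes out of that same local analysis at $-\infty$; your idea of matching the leading tail coefficient by a shift and then propagating $w\equiv 0$ forward by causality is in the right spirit, but the proposed energy step is not sound as stated (Plancherel is invoked on a half-line, and $\int bw^2$ has the wrong sign near $-\infty$ where $b\approx h'(\phi_-)>0$); a weighted sup-norm Gronwall/contraction estimate near $-\infty$ is what actually closes it.
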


This theorem in \cite{AHS} appears with different notation and divided in a series of results that are proved step by step. 
Also, their results give less regularity than in the version above, it is, however, straightforward to show higher regularity of the solutions, see \cite{ACH} and \cite{CA}.

In this section we prove the following vanishing viscosity result:
\begin{theorem}\label{TW:vanish-vis}
If $\phi_\e$ is a solution of 
\begin{equation}\label{TWepsilon}
\begin{cases}
-c(\phi_\e(\xi) - \phi_-) + f(\phi_\e(\xi)) - f(\phi_-) = \e \DD^\alpha\phi_\e(\xi), \\ 
\displaystyle\lim_{\xi \to -\infty}{\phi_\e(\xi)}= \phi_- \ \text{ and } \displaystyle\lim_{\xi \to \infty}{\phi_\e(\xi)}= \phi_+,
\end{cases}
\end{equation}
then $\phi_\e \to \phi_0$ as $\e \to 0$ pointwise in $\R$, where 
\[
\phi_0(\xi)=
\begin{cases}
 \phi_- \quad \mbox{if}\quad \xi<\xi_0\\
 \phi_+ \quad \mbox{if}\quad \xi>\xi_0
\end{cases}
\]
for some $\xi_0$. Moreover,
\begin{equation}\label{exp:bh}
|\phi_\e(\xi) - \phi_-| = O\left(\exp(\lambda_\e\xi)\right) \ \text{ when } \xi \to -\infty, 
\ \text{ with } \lambda_\e=\left(\frac{h'(\phi_-)}{\e}\right)^{\frac{1}{\alpha}}>0
\end{equation}
and
\begin{equation}\label{alg:bh}
|\phi_\e(\xi) - \phi_+| = O\left(\frac{\e}{\xi^\alpha}\right) \ \text{ when } \xi \to +\infty.
\end{equation}
\end{theorem}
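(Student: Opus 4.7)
The plan is to reduce the $\e$-dependence to the unscaled ($\e=1$) problem by a scaling argument, and then analyse the two tails of the single profile $\phi$ by linearisation around $\phi_\pm$.

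\textbf{Scaling and pointwise convergence.} I would first observe that $\DD^\alpha$ is homogeneous of degree $\alpha$ under dilations: if $\psi(\eta) := \phi(L\eta)$, then $(\DD^\alpha \psi)(\eta) = L^\alpha (\DD^\alpha \phi)(L\eta)$. Choosing $L = \e^{1/\alpha}$ and setting $\phi_\e(\xi) := \phi(\xi/\e^{1/\alpha})$ converts (\ref{TWepsilon}) into the $\e=1$ equation (\ref{TWProblem}) for $\phi$. Since, by Theorem~\ref{TWexis}, the profile is unique up to translation, I can normalise once and for all, say by $\phi(0) = (\phi_- + \phi_+)/2$. Monotonicity of $\phi$ then yields, for each fixed $\xi \neq 0$, $\phi_\e(\xi) \to \phi_-$ if $\xi<0$ and $\phi_\e(\xi)\to \phi_+$ if $\xi>0$, that is, pointwise convergence to a shock (with $\xi_0 = 0$ for this normalisation). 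Moreover, the $\e$-dependent tail estimates (\ref{exp:bh}) and (\ref{alg:bh}) are immediate consequences of the corresponding $\e = 1$ statements applied at $\eta = \xi/\e^{1/\alpha}$: an $O(e^{\mu \eta})$ bound on $\phi - \phi_-$ with $\mu = h'(\phi_-)^{1/\alpha}$ yields (\ref{exp:bh}), and an $O(\eta^{-\alpha})$ bound on $\phi - \phi_+$ yields (\ref{alg:bh}). It therefore suffices to prove both tail asymptotics for the unscaled profile $\phi$.

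\textbf{Exponential decay at $-\infty$.} Let $v := \phi - \phi_-$, so that $v \to 0$ as $\xi \to -\infty$ and a Taylor expansion of $h$ gives $h'(\phi_-) v + O(v^2) = \DD^\alpha v$. For the exponential ansatz $v_{\mathrm{c}}(\xi) = e^{\mu \xi}$ with $\mu > 0$, a direct calculation using the change of variables $u = \xi - z$ in (\ref{frac:der}) together with the Gamma integral $\int_0^\infty u^{-\alpha} e^{-\mu u}\,du = \Gamma(1-\alpha)\mu^{\alpha-1}$ yields $\DD^\alpha v_{\mathrm{c}} = \mu^\alpha v_{\mathrm{c}}$, which singles out $\mu = h'(\phi_-)^{1/\alpha}$. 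To promote this formal dispersion relation into a pointwise bound on $v$, I would apply a linear stability analysis in exponentially weighted spaces: since $v \in H^3(-\infty, 0)$ by Theorem~\ref{TWexis}, working in the norm $\|e^{-\mu' \cdot}\, v\|_{L^\infty(-\infty,0)}$ for $\mu'$ slightly less than $\mu$, and using the equivalent representation (\ref{integralformula2}) to control $\DD^\alpha v$ in this weighted norm, a contraction/fixed-point argument on the remainder equation yields the sharp decay rate.

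\textbf{Algebraic decay at $+\infty$.} For the right tail I would exploit the decomposition
\begin{equation*}
\DD^\alpha \phi(\xi) = d_\alpha\, \xi^{-\alpha} (\phi(\xi) - \phi_-) + d_\alpha \int_{-\infty}^{\xi} \phi'(z) \left(\frac{1}{(\xi - z)^\alpha} - \frac{1}{\xi^\alpha}\right) dz,
\end{equation*}
valid for $\xi > 0$, which uses that $\phi' \in L^1(\R)$ because $\phi$ is bounded and monotone. The first term tends to $d_\alpha (\phi_+ - \phi_-) \xi^{-\alpha}$ as $\xi \to +\infty$, and the second term is $o(\xi^{-\alpha})$ upon splitting the integration into a bounded window $|z| \leq R$, where the kernel difference is $O(\xi^{-\alpha - 1})$, and two tail pieces controlled respectively by the exponential decay at $-\infty$ established in the previous step and by a priori decay of $\phi'$ near $+\infty$. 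Inserting the asymptotic into $h(\phi) = \DD^\alpha \phi$ and linearising $h(\phi) = h'(\phi_+) w + O(w^2)$ with $w := \phi - \phi_+$ and $h'(\phi_+) < 0$ then gives the leading behaviour $w(\xi) \sim (\phi_+ - \phi_-) / \bigl(\Gamma(1 - \alpha) h'(\phi_+)\, \xi^\alpha\bigr)$, which is of size $O(\xi^{-\alpha})$ with the expected sign.

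\textbf{Main obstacle.} The hardest step is closing the nonlinear loop at $+\infty$, since the $o(\xi^{-\alpha})$ remainder in the expansion of $\DD^\alpha \phi$ depends a priori on the very tail of $\phi$ that we are trying to pin down. I would resolve this by a bootstrap argument: starting from the bare knowledge that $w$ is bounded with $w \to 0$, one first obtains from the travelling wave equation a crude bound $w(\xi) = O(\xi^{-\alpha + \delta})$ (together with a corresponding bound on $\phi'$ obtained from the differentiated equation), then feeds this back into the error term above and iterates until the sharp $O(\xi^{-\alpha})$ rate is reached. By contrast, the exponential decay at $-\infty$ amounts essentially to a weighted-space linear analysis and should be substantially more direct.
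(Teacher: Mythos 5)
Your scaling step and the pointwise limit coincide with the paper's argument (the paper uses exactly the rescaling $\phi_\e(\xi)=\phi(\xi/\e^{1/\alpha})$ and reduces everything to $\e=1$), and your treatment of the left tail — the dispersion relation $\DD^\alpha e^{\mu\xi}=\mu^\alpha e^{\mu\xi}$ with $\mu=h'(\phi_-)^{1/\alpha}$ plus a weighted fixed-point argument — is a reasonable sketch of what the paper simply cites from Lemma~2 of \cite{AHS}. The problem is the right tail, which is the actual content of the theorem, and there your proposal has a genuine gap precisely at the point you flag as the ``main obstacle''. In your decomposition the dangerous piece is $\int_{\xi/2}^{\xi}|\phi'(z)|\,\bigl((\xi-z)^{-\alpha}-\xi^{-\alpha}\bigr)\,dz$: because of the kernel singularity at $z=\xi$, knowing only that $\phi'\in L^1$, $\phi$ is monotone and $w=\phi-\phi_+\to 0$ gives bounds like $\|\phi'\|_{L^\infty(\xi/2,\infty)}\,\xi^{1-\alpha}$, which is useless without a quantitative rate for $\phi'$; and the seed of your bootstrap, the ``crude bound $w(\xi)=O(\xi^{-\alpha+\delta})$'', is asserted with no mechanism to produce it. Extracting \emph{any} algebraic rate from $w\to0$ is exactly the hard step: for instance, the Marchaud form of the operator only yields a recursion of the type $w(\xi)\le C\,w(\xi/2)^{1-\alpha}+C\,\xi^{-\alpha}$, which does not improve a merely vanishing bound into a decaying one, and the ``corresponding bound on $\phi'$ from the differentiated equation'' runs into the same circularity. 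So as written the nonlinear loop is not closed.

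The paper closes it by a different device: it cuts the line at a large $\xi_\infty$, writes the equation on $(\xi_\infty,\infty)$ as a Caputo-type fractional ODE $\DD^\alpha_0 W=h'(\phi_+)W+R-g$ for $W=\phi-\phi_+$, and inverts it by the variation-of-constants formula of Gorenflo--Mainardi, $W(z)=W(0)v(z)+\frac{1}{h'(\phi_+)}\int_0^z v'(y)Q(z-y)\,dy$, where $v$ is the fundamental solution with the known asymptotics $v(z)\sim Cz^{-\alpha}$ and $v'(y)\sim c\,y^{\alpha-1}$ near $0$. Monotonicity of $\phi$ and convexity of $f$ make $Q\ge0$, so the formula immediately gives the lower bound $W(z)\ge W(0)v(z)\gtrsim z^{-\alpha}$; the upper bound is obtained by estimating the convolution with splittings at $M(z)=\sigma z$ and $M'(z)^\alpha=\delta/W(z)$ and absorbing the quadratic and memory terms using the smallness of $W(0)=\phi(\xi_\infty)-\phi_+$ (achieved by taking $\xi_\infty$ large), which yields $c\,z^{-\alpha}\le W(z)\le C\,z^{-\alpha}$ with no prior decay rate needed. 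If you want to rescue your route, you need an ingredient of this kind (an integral representation whose kernel already carries the $z^{-\alpha}$ decay, plus a smallness/absorption argument), not just an unquantified bootstrap; note also that the two-sided bound shows the decay is genuinely algebraic, something your one-sided expansion would not give. Your claimed leading constant $d_\alpha(\phi_+-\phi_-)/h'(\phi_+)$ is plausible but is more than the paper proves.
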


\begin{proof}
We observe that the change of variable $\phi(\xi)= \phi(\frac{\xi'}{\e^{1/\alpha}}) = \phi_\e(\xi')$ 
transforms problem (\ref{TWProblem}) into problem (\ref{TWepsilon}), so we can apply the existence result to (\ref{TWepsilon}) 
with the same conclusion, by simply adding the $\e$ dependency.
Then the pointwise limit follows from (\ref{exp:bh}) and (\ref{alg:bh}).

For the rest of the proof we take $\e=1$ without loss of generality, by the rescaling specified above. The behaviour (\ref{exp:bh}) of the travelling wave solutions for $\xi$ very negative is done in Lemma~2 of \cite{AHS}. This is in fact the starting point of the existence proof.

It remains to prove (\ref{alg:bh}). In this case, we already have the existence of solutions, and we can take $\phi$ as a known function and focus 
in the terms that involve very large $\xi$. We can rewrite the equation as follows, for some $\xi_\infty\gg 1$,
\begin{equation}
h(\phi(\xi)) = g(\xi) + \DD^\alpha_{\xi_\infty}[\phi](\xi),
\end{equation}
where we use the notation
\[
\DD^\alpha_{\xi_\infty}[\phi](\xi) := d_\alpha \int_{\xi_\infty}^{\xi}{\frac{f'(y)}{(\xi - y)^\alpha} dy},
\]
which is, up to a shift, a classical Caputo derivative, and the function
\[
g(\xi) := d_\alpha \int_{-\infty}^{\xi_\infty}{\frac{\phi'(y)}{(\xi-y)^\alpha} dy} \leq 0,
\]
(here we use that $\phi$ is decreasing).
We can now solve the equation implicitly, by the corresponding variation of constants formula, that is derived by using Laplace transform as it is done in \cite{GM2}. 
Namely, we introduce the new dependent variable $W= \overline{\phi} - \phi_+$, and a new independent one, $z=\xi-\xi_\infty$, so that $W$ satisfies
\begin{equation}\label{W:eq}
\DD^\alpha_0W(z) = h'(\phi_+)W(z)  + R(\phi(z+\xi^\infty),\phi_+) - g(z+\xi^\infty),
\end{equation}
where
\begin{equation}\label{W:eq:rest}
  R(\phi,\phi_+) = h(\phi) - h(\phi_+) - h'(\phi_+)(\phi - \phi_+)
  \geq 0.
\end{equation}
For the last inequality we use the convexity of $f$ in the interval $(\phi_+,\phi_-)$. 
We observe, that since $\phi$ is uniformly bounded and regular with bounded derivatives, there exists a constant $C>0$ such that
\begin{equation}\label{bd:R}
 |R(\phi(z+\xi^\infty),\phi_+)| = R(\phi(z+\xi^\infty),\phi_+) \leq C  \left( W(z)\right)^2, \quad z\geq 0. 
\end{equation}
If $W$ solves (\ref{W:eq})-(\ref{W:eq:rest}) 
with a given initial conditions $W(0)$, then it also satisfies
\begin{equation}\label{W:eq:impli}
\begin{split}
W(z)& = W(0)v(z) + \frac{1}{h'(\phi_+)} \int_{0}^{z}v'(y) Q(z-y) \ dy \\
 &\mbox{with} \quad Q(z)= R(\phi(z+\xi^\infty),\phi_+) - g(z+\xi^\infty)\geq 0,
\end{split}
\end{equation}
where 
\begin{equation}\label{W:eq:funda}
v(z)=\frac{1}{2\pi i}\int_{-\infty i + \sigma}^{+\infty i + \sigma}{e^{s z} \frac{s^{\alpha-1}}{s^{\alpha}- h'(\phi_+)} \ ds}
\quad \mbox{with} \quad \sigma\geq 1.
\end{equation}
We recall that $v$ is a positive decreasing function such that $\lim_{z\to 0^+}v(z)=1$, $\lim_{z\to\infty} v(z)=0$, 
$\lim_{z\to 0^+} v'(z)=-\infty$ and $\lim_{z\to\infty} v'(z)=0$, with the behaviours
\begin{equation}\label{v:infty}
v(z)\sim  \frac{C}{z^\alpha} \quad\mbox{as}\quad z\to \infty
\end{equation}
for some positive constant $C$ and 
\begin{equation}\label{v:prime:infty}
v'(z) \sim \frac{h'(\phi_+)}{\Gamma(\alpha)}z^{\alpha-1} \quad \mbox{as}\quad z\to 0^+
\end{equation}
(see \cite{GM2}).

We notice that the second term on the right-hand side of (\ref{W:eq:impli}) is non-negative, since $\phi$ is a decreasing 
function and $h'(\phi_+)<0$. Then, also applying (\ref{bd:R}), we obtain 
\begin{equation}\label{W:int:bd}
 W(0)\,v(z) \leq W(z)\leq W(0)\,v(z) + C_1 \int_{0}^{z}(-v'(y)) W(z-y)^2 \, dy + C_2 \int_{0}^{z}v'(y) \,g(z+\xi^\infty-y) \, dy
\end{equation}
with
\[
|g(z+\xi^\infty-y)|=  d_\alpha \int_{-\infty}^{0}\frac{(-W'(r))}{(z-y-r)^\alpha}\,dr,
\]
for some positive constants $C_1$ and $C_2$.

Let us first get an estimate on the last term of (\ref{W:int:bd}). We take $M>0$ large enough such that $|v'(z)|\leq C/z^{1+\alpha}$ for all 
$z\geq M$. Then, we split the integral of this term as follows:
\begin{equation}\label{bdd:I}
\begin{split}
I:= & \int_{0}^{M}|v'(y)| |g(z+\xi^\infty-y)|\,  dy + \int_{M}^z|v'(y)| |g(z+\xi^\infty-y)|\, dy\\
=& d_\alpha \int_{0}^{M}|v'(y)| \int_{-\infty}^{0}\frac{(-W'(r))}{(z-y-r)^\alpha}\,dr\, dy + d_\alpha\int_{M}^z|v'(y)|\int_{-\infty}^{0}\frac{(-W'(r))}{(z-y-r)^\alpha}\,dr\, dy.
\end{split}
\end{equation}

We notice that, by Lemma~\ref{equiv:repre} (\ref{integralformula2}) and the fact that $W$ is decreasing, we can write:
\begin{equation}\label{alternative:I}
  \int_{-\infty}^{0}\frac{(-W'(r))}{(z-y-r)^\alpha} \,dr = \alpha\int_{-\infty}^{0} \frac{W(r)-W(z-y)}{(z-y-r)^{\alpha+1}}\,dr + \frac{W(z-y)-W(0)}{ (z-y)^\alpha}.
\end{equation}
Then, we have two estimates, one deduced directly from the integral on the left-hand side of (\ref{alternative:I}),
\begin{equation}\label{int:r:bd}
\begin{split}
\int_{-\infty}^{0}\frac{|W'(r)|}{(z-y-r)^\alpha} \,dr &\leq \int_{-1}^{0}{\frac{C_1}{(z-y-r)^\alpha} \, dr} +
 \int_{-\infty}^{-1}{\frac{C_2 e^{\lambda r}}{(z-y+1)^\alpha} \, dr } \leq \frac{ C}{ 1 + (z-y)^\alpha}, 
\end{split}
\end{equation}
and another that can be deduced from the right-hand side of (\ref{alternative:I}),
\begin{equation}\label{int2:r:bd}
\begin{split}
 \int_{-\infty}^{0} \frac{|W(s)-W(z-y)|}{(z-y-s)^{\alpha+1}}\,ds &\leq 
\frac{ C_1}{(z-y)^{\alpha+1}}+ C_2\frac{W(z-y)}{(z-y)^\alpha}
\end{split}
\end{equation}
since $W$ is non-negative.

Then, for the first integral term in (\ref{bdd:I}), using (\ref{int2:r:bd}) and that $W$ is decreasing and non-negative, that
\begin{equation}\label{bdd:zero2M}
\begin{split}
  \int_{0}^{M}|v'(y)| \int_{-\infty}^{0}\frac{(-W'(r))}{(z-y-r)^\alpha}\,dr\, dy &\\
  \leq 
\int_{0}^{M} C y^{\alpha-1} \left( C_1 \frac{W(z-y)}{(z-y)^\alpha} + \frac{ C_2}{(z-y)^{\alpha+1}}\right)\,dy &\\\leq 
C\left( \frac{ M^\alpha W(z-M)}{(z-M)^\alpha} +  \frac{ M^\alpha}{(z-M)^{\alpha+1}} \right).&
\end{split}
\end{equation}
Now we take $M(z)$ as follows:
\[
M(z)=\sigma z \quad \mbox{for} \quad \sigma\in (0,1),
\]
where we will later take $\sigma$ as small as necessary. This gives, for some positive constant $C$ independent of $\sigma$,
\begin{equation}\label{est1}
\begin{split}
\int_{0}^{\sigma z}|v'(y)| \int_{-\infty}^{0}\frac{(-W'(r))}{(z-y-r)^\alpha}\,dr\, dy \leq 
C\left( \sigma^\alpha W(z-M) +  \frac{1}{(z-M)} \right).
\end{split}
\end{equation}
Observe that  $z-M(z)= (1-\sigma)z$.

For the second term of (\ref{bdd:I}) (an integral over $(M(z),z)$) we get,
\begin{equation}\label{est2}
\begin{split}
\int_{\sigma z}^{z}|v'(y)| \int_{-\infty}^0 \frac{-W'(r)}{(z-y-r)^{\alpha}}\,dr\, dy
\leq C\int_{\sigma z}^{z} \frac{1}{y^{1+\alpha}}  \frac{1}{1+(z-y)^{\alpha}} \,dy \\
\leq 
C\int_{\sigma z}^{z}  \frac{1}{y^{1+\alpha}} \,dy
 \leq C \frac{1}{z^{\alpha}}.
\end{split}
\end{equation}
Combining (\ref{est1}) and (\ref{est2}), we obtain
\begin{equation}\label{est:I}
I\leq C\left( \sigma^\alpha W(z(1-\sigma)) + \frac{1}{z^\alpha}+  \frac{1}{z (1-\sigma)} \right).
\end{equation}

It remains to get an estimate on the second term of (\ref{W:int:bd}). We proceed similarly, by splitting the integral:
\begin{equation}\label{est:I:prime}
  \begin{split}
    I':=\int_0^{M'} (-v'(z))W(z-y)^2dy+\int_{M'}^z (-v'(z))W(z-y)^2dy\\
    \leq C\left( (M')^\alpha W(z-M')^2 - \frac{W(0)^2}{z^\alpha} + \frac{W(0)^2}{(M')^\alpha}\right).
\end{split}
  \end{equation}
We then take $M'$ such that $M'(z)^\alpha=\delta /W(z)$ with $\delta\in(0,1)$ for $z$ large. For each $z$ fixed and large, we can take $\sigma$ and
 $\delta$ small enough, such that, the estimate (\ref{est:I}) together with (\ref{est:I:prime}) imply that 
\[
\frac{1}{C} W(z) \leq W(z) \left(1- c\sigma^\alpha -c' \delta \left(1 + \frac{W(0)^2}{\delta^2}\right)\right)\leq C\frac{1}{z^\alpha} .
\]
for some $C>1$. This is possible by taking $W(0)$ as small as necessary once $\sigma$ and $\delta$ are fixed. Recall that $W(0)=\phi(\xi_\infty)-\phi_+$,
 and $\xi_\infty$ can be chosen sufficiently large so that $W(0)$ is arbitrary small. Also all other constants are independent of $\xi_\infty$, $\sigma$ 
and $\delta$, also $\delta$ and $\sigma$ are independent of each other·

On the other hand, applying the right hand side inequality of (\ref{W:int:bd}) and the behaviour of $v(z)$ for $z$ large (\ref{v:infty}), 
we obtain that there exists a constant, depending on $\xi_\infty$, such that
\[
\frac{1}{C_\infty}\frac{1}{z^\alpha} \leq W(z) \leq C_\infty \frac{1}{z^\alpha} \quad  \mbox{as}\quad z\to\infty.
\]
This finishes the proof.
\end{proof}

\section{Generalisation to regularizations by general Riesz-Feller operators}\label{sec:rieszfeller}
As anticipated in the Introduction, in this section, we explain how our results of sections \ref{sec:pre}, \ref{sec:entropy:l1} and \ref{sec:limit} also hold for (\ref{Problem:general}), where the non-local regularization is given by the operator (\ref{symbol1:general})-(\ref{symbol2:general}).


Existence and regularity results are obtained similarly by defining mild solutions as in Definition~\ref{solution} with the kernel
\[
K_{\gamma}^\beta(t,x) := \FF^{-1}\left( e^{t\psi_\gamma^\beta(\cdot)} \right)(x)  
\]
instead of $K$, and deriving the properties of $K_{\gamma}^\beta$, that hold as in Proposition~\ref{K:prop} (for the proofs we refer to \cite{AK2} Lemma~2.1). One can also show all the other results of Section~\ref{sec:pre} in a similar way. In particular, the obvious extensions of Theorem~\ref{local:existence} and Proposition~\ref{time:reg} hold. 

Concerning the results of sections \ref{sec:entropy:l1} and \ref{sec:limit}, we observe that generalisations can be obtained with minimal effort. This is because equivalent integral representations of the operators (\ref{symbol1:general})-(\ref{symbol2:general}) acting on $C^2$ functions can be obtained, see e.g. \cite{Sato} and \cite{AK2}. Thus we need to generalise the weak entropy inequalities, for both the $L^1$ contraction property and the zero viscosity limit. The key integral representation of this type of operator is the following:

\begin{proposition}[\cite{AK2} Theorem 2.4 and, e.g., \cite{Sato}]\label{repre:general} If $1< \beta<2$ and $|\gamma|\leq \min\{\beta,2-\beta\}$, then for all $v\in S(\R)$ and $x\in\R$
  \begin{equation}\label{RieszFeller:int}
  D^\beta_\gamma [v](x)  =
  c_\gamma^1 \int_0^\infty \frac{v(x+z) - v(x) - v'(x) z}{z^{1+\beta}} \,dz
 + c_\gamma^2\int_0^\infty \frac{v(x-z) - v(x) + v'(x) z}{z^{1+\beta}} \,dz
  \end{equation}
  for some constants $c_\gamma^1$, $c_\gamma^2 \geq 0$ with $c_\gamma^1+c_\gamma^2>0$. This singular integral representation is well-defined for $C^2_b$ functions and is such that
  $D^\beta_\gamma$ maps $C^2_b(\R)$ to $C_b(\R)$ and is a bounded operator.

  Moreover, for $ v \in C^2_b(\R)$ one has the estimate
  \[
  \sup_{x\in\R}|D^\beta_\gamma [v](x)|\leq \frac{1}{2}(c_\gamma^1 + c_\gamma^2) \|v''\|_{C_b(\R)}
  \frac{M^{2 - \beta}}{2 - \beta}+ 2(c_\gamma^1 + c_\gamma^2)\|v'\|_{C_b(\R)} \frac{M^{1 - \beta}}{\beta - 1}<\infty
  \]
  for some positive constant $M$ and $c_\gamma^1$ and $c_\gamma^2$ as above.
\end{proposition}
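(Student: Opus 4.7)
The plan is to establish the identity (\ref{RieszFeller:int}) first for Schwartz functions by Fourier analysis, and then to obtain the pointwise bound (and automatic extension to $C^2_b$) by a direct splitting of the singular integral at scale $M$. For $v\in S(\R)$ both sides of (\ref{RieszFeller:int}) are tempered; the Fourier transform of $v(\cdot+z)-v(\cdot)-v'(\cdot)\,z$ is $(e^{i\xi z}-1-i\xi z)\hat v(\xi)$, and the $-v'(x)z$ correction makes the integrand $O(z^2)$ at $z=0$ and $O(z)$ at $\infty$, so Fubini is legitimate against $z^{-1-\beta}\,dz$ for $1<\beta<2$. Hence the right-hand side of (\ref{RieszFeller:int}) is the Fourier multiplier with symbol
\[
m(\xi)=c_\gamma^1\!\int_0^\infty\!\!\frac{e^{i\xi z}-1-i\xi z}{z^{1+\beta}}\,dz
+c_\gamma^2\!\int_0^\infty\!\!\frac{e^{-i\xi z}-1+i\xi z}{z^{1+\beta}}\,dz.
\]

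Next I would evaluate this symbol by analytic continuation from the Laplace integral. Two integrations by parts (legal because of the cancellations at $0$ and $\infty$) reduce
$\int_0^\infty(e^{-sz}-1+sz)z^{-1-\beta}\,dz$ to $s^{\beta}\,\Gamma(-\beta)$ for $\Re s>0$, and continuity on $s=-i\xi$ gives the oscillatory version
\[
\int_0^\infty\frac{e^{i\xi z}-1-i\xi z}{z^{1+\beta}}\,dz=\Gamma(-\beta)\,|\xi|^\beta\,e^{-i\sign(\xi)\beta\pi/2}.
\]
Putting the two integrals together and matching with $\psi_\gamma^\beta(\xi)=-|\xi|^\beta e^{i\sign(\xi)\gamma\pi/2}$ produces a $2\times2$ linear system (one equation for $\xi>0$ and its conjugate for $\xi<0$) whose unique solution is
\[
c_\gamma^1=\frac{-\sin((\beta-\gamma)\pi/2)}{\Gamma(-\beta)\sin(\beta\pi)},\qquad
c_\gamma^2=\frac{-\sin((\beta+\gamma)\pi/2)}{\Gamma(-\beta)\sin(\beta\pi)}.
\]
Since $\Gamma(-\beta)>0$ and $\sin(\beta\pi)<0$ for $\beta\in(1,2)$, the signs of $c_\gamma^1,c_\gamma^2$ are those of $\sin((\beta\mp\gamma)\pi/2)$, and the restriction $|\gamma|\leq\min\{\beta,2-\beta\}$ is precisely what places both arguments in $[0,\pi]$, forcing $c_\gamma^1,c_\gamma^2\geq 0$ with $c_\gamma^1+c_\gamma^2>0$ (both sines cannot simultaneously vanish under these constraints).

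For the pointwise bound on $C^2_b$, split each of the two integrals at $z=M$. For $0<z<M$, Taylor's theorem with integral remainder yields $|v(x\pm z)-v(x)\mp v'(x)z|\leq\tfrac12\|v''\|_\infty z^{2}$, whence integration of $\tfrac12\|v''\|_\infty z^{1-\beta}$ over $(0,M)$ produces the first summand $\tfrac12(c_\gamma^1+c_\gamma^2)\|v''\|_\infty M^{2-\beta}/(2-\beta)$. For $z>M$, the mean-value theorem gives $|v(x\pm z)-v(x)|\leq\|v'\|_\infty z$, hence $|v(x\pm z)-v(x)\mp v'(x)z|\leq 2\|v'\|_\infty z$, and integration of $2\|v'\|_\infty z^{-\beta}$ over $(M,\infty)$ gives $2(c_\gamma^1+c_\gamma^2)\|v'\|_\infty M^{1-\beta}/(\beta-1)$, which is precisely the second summand. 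These estimates are uniform in $x\in\R$, so $D^\beta_\gamma v$ defined by the integral is bounded on $C^2_b$, and continuity in $x$ follows from dominated convergence (again using the same integrable majorant). The identification with the Fourier multiplier on all of $C^2_b$ then follows by approximating $v\in C^2_b$ by mollified truncations $v_n=(v\chi_n)*\rho_n\in S(\R)$ chosen so that $v_n,v_n',v_n''$ stay uniformly bounded and converge pointwise to $v,v',v''$: both the integral formula (by dominated convergence) and the multiplier action on suitable test functions pass to the limit and coincide.

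The main obstacle is the second step, the explicit Fourier evaluation and the bookkeeping needed to verify that the sign condition $c_\gamma^1,c_\gamma^2\geq 0$ is exactly equivalent to $|\gamma|\leq\min\{\beta,2-\beta\}$; once the symbol computation is done, the splitting argument for the bound and the density argument for the extension to $C^2_b$ are routine.
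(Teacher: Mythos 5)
Your proposal is essentially correct, but note that the paper itself offers no proof of this proposition: it is imported from \cite{AK2} (Theorem 2.4) and \cite{Sato}, and the only in-paper commentary is the remark that follows it, which recovers the final $C^2_b$ estimate by rewriting the two singular integrals as $\tfrac{1}{\beta(\beta-1)}\partial_x\DD^{\beta-1}$ and $\tfrac{1}{\beta(\beta-1)}\partial_x\overline{\DD^{\beta-1}}$ through the manipulations of Lemma~\ref{equiv:repre} and Lemma~\ref{integrationbyparts}. What you give is therefore a self-contained substitute for the citation. Your symbol computation is sound: the evaluation $\int_0^\infty (e^{-sz}-1+sz)z^{-1-\beta}dz=\Gamma(-\beta)s^\beta$ for $\Re s>0$ and its boundary value at $s=\mp i\xi$ are correct, matching with $\psi^\beta_\gamma$ in (\ref{symbol2:general}) gives a $2\times 2$ real system whose solution is your $c^1_\gamma,c^2_\gamma$, and by the reflection formula these are exactly the classical Feller constants $\frac{\Gamma(1+\beta)}{\pi}\sin\bigl((\beta\mp\gamma)\pi/2\bigr)$; since $\Gamma(-\beta)>0$ and $\sin(\beta\pi)<0$ on $(1,2)$, your observation that nonnegativity of both constants is equivalent to $|\gamma|\le\min\{\beta,2-\beta\}$, with $c^1_\gamma+c^2_\gamma>0$ there, is correct. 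The splitting of each integral at $z=M$ (Taylor with integral remainder below, mean value theorem above) reproduces the stated bound verbatim, and in fact for every $M>0$, uniformly in $x$, which also gives boundedness and (by dominated convergence) continuity of $D^\beta_\gamma v$ for $v\in C^2_b$.

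Two points deserve tightening. First, the identification of the singular-integral formula with the multiplier-defined operator on all of $C^2_b$ is only sketched: for merely bounded $v$ the Fourier definition must be read distributionally, so your mollified-truncation argument should be phrased by pairing $D^\beta_\gamma v_n$ with a test function, moving the operator onto the test function via its adjoint (as in Lemma~\ref{integrationbyparts} and Proposition~\ref{key:general}~(ii)), and passing to the limit using the uniform majorant $\min\{z^2,z\}\,z^{-1-\beta}$; as stated, ``the multiplier action on suitable test functions pass to the limit'' is the right idea but not yet an argument. Second, the explicit values of $c^1_\gamma,c^2_\gamma$ depend on the Fourier sign convention (the paper's $e^{-i\xi x}$ versus the $e^{+i\kappa x}$ convention of the Riesz--Feller literature from which (\ref{symbol2:general}) is quoted); a flip of convention merely exchanges $c^1_\gamma$ and $c^2_\gamma$ (i.e.\ $\gamma\mapsto-\gamma$), so it does not affect the proposition, but it is worth flagging when comparing your constants with \cite{AK2} or \cite{Sato}.
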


Now, the maximum principle as stated in Lemma~\ref{negativity} holds for $D^\beta_\gamma$ (the proof is similar by first using the representation (\ref{RieszFeller:int})) and thus global existence for (\ref{Problem:general}) is proved similarly. In particular, the analogous of Proposition~\ref{global:existence} is satisfied for mild solutions of (\ref{Problem:general}).

We note that the last estimate in Proposition~\ref{repre:general} above follows also by proving the equivalent representation for regular functions of the integral terms in (\ref{RieszFeller:int}), that is:
\[
\int_0^\infty \frac{v(x+z) - v(x) - v'(x) z}{z^{1+\beta}} \,dz=\frac{1}{\beta(\beta-1)} \partial_x\int_{-\infty}^0\frac{v'(x+r)}{|r|^{\beta-1}}dr,
\]
and
\[
\int_0^\infty \frac{v(x-z) - v(x) + v'(x) z}{z^{1+\beta}} \,dz=\frac{1}{\beta(\beta-1)} \partial_x\int^{\infty}_0\frac{v'(x+r)}{|r|^{\beta-1}}\,dr.
\]
These identities follow from the steps in the proof of Lemma~\ref{equiv:repre}. Then for $C^2$ functions we have
\begin{equation}\label{rel:operators}
D^\beta_\gamma[ v](x)= \frac{1}{d_{\beta+1}} \left(c_\gamma^1\partial_x\DD^{\beta-1}[v](x)
+ c_\gamma^2 \partial_x\overline{\DD^{\beta-1}}[v](x)\right),
\end{equation}
where $\partial_x\overline{\DD^{\beta-1}}$ and $\overline{\DD^{\beta-1}}$ are defined and characterised in Lemma~\ref{integrationbyparts} with $\beta-1=\alpha$.

With the representation of Proposition~\ref{repre:general} and that in (\ref{rel:operators}) we obtain the following:
\begin{proposition}\label{key:general} Let $u\in C_b^2((0,\infty)\times \R)$, then
  \begin{itemize}
  \item[(i)] For all $\eta\in C^2(\R)$ convex and $\varphi\in C_b^1(\R)$, it holds
    \[
    D^\beta_\gamma[\eta(\varphi)](x) \geq \eta'(\varphi) D^\beta_\gamma[\varphi](x).
    \]
    
  \item[(ii)] For all $\varphi\in C_c^\infty(\R)$, then
    \begin{equation*}
\int_{0}^{\infty}\int_{\R} \varphi(x)\,D^\beta_\gamma [u(t,\cdot)](x) \, dx\, dt =
 \int_{0}^{\infty}\int_{\R}  \overline{D^\beta_\gamma}[ \varphi](x)\, u(t,x) \, dx\,dt,
    \end{equation*}
    where
    \[
    \overline{D^\beta_\gamma}[g](x)=\frac{1}{d_{\beta+1}} \left(c_\gamma^1\partial_x\overline{\DD^{\beta-1}}[v](x)
+ c_\gamma^2 \partial_x\DD^{\beta-1}[v](x)\right).
    \]
  \item[(iii)] If $u(t,\cdot)-v(t, \cdot)\in L^1(\R)\cap C_b^2(\R)$, then
    \[
    \sign(u(t,x)-v(s,y))\,\left(\partial_x\overline{\DD^{\beta-1}}[u(t,\cdot)](x)-\partial_y\overline{\DD^{\beta-1}}[v(s,\cdot)](y)\right) \leq
    \overline{\DD}^{\beta}_{x,y}[|u(t,\cdot)-v(s,\cdot)|](x,y),
\]
where the operator $\overline{\DD}_{x,y}^{\beta}={}^0\overline{\DD}_{x,y}^{\beta}$, and ${}^r\overline{\DD}_{x,y}^{\beta}$ is defined in (\ref{def:new:Dadjoint}).
  \end{itemize}
\end{proposition}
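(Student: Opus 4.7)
For (i), the plan is to plug $v=\eta(\varphi)$ directly into the representation (\ref{RieszFeller:int}). Each of the two numerators then has the form $\eta(\varphi(x\pm z))-\eta(\varphi(x))\mp\eta'(\varphi(x))\varphi'(x) z$, and convexity of $\eta$ gives $\eta(\varphi(x\pm z))-\eta(\varphi(x))\geq \eta'(\varphi(x))(\varphi(x\pm z)-\varphi(x))$. Hence each numerator dominates $\eta'(\varphi(x))$ times the corresponding numerator for $v=\varphi$. Since $c^1_\gamma,c^2_\gamma\geq 0$ and the kernel $|z|^{-1-\beta}$ is positive, the inequality persists under integration, giving (i). This is simply the analogue of (\ref{convex:multi}) adapted to (\ref{RieszFeller:int}).

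For (ii), I would use the splitting (\ref{rel:operators}), which expresses $D^\beta_\gamma$ as a non-negative linear combination of $\partial_x\DD^{\beta-1}$ and $\partial_x\overline{\DD^{\beta-1}}$. Lemma~\ref{integrationbyparts} already identifies $\partial_x\overline{\DD^{\beta-1}}$ as the adjoint of $\partial_x\DD^{\beta-1}$, and a mirror-image integration by parts (swapping the two half-lines and checking that the boundary terms at $\pm\infty$ vanish using $\varphi\in C_c^\infty$ together with $u\in C_b^2$) shows conversely that $\partial_x\DD^{\beta-1}$ is the adjoint of $\partial_x\overline{\DD^{\beta-1}}$. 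Taking the corresponding combination, the coefficients $c^1_\gamma$ and $c^2_\gamma$ get swapped, and the stated formula for $\overline{D^\beta_\gamma}$ drops out by linearity.

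For (iii), the approach is to replicate the computation (\ref{L1:key:est}) with the integration variable now running over $(0,\infty)$ as dictated by the representation of $\partial_x\overline{\DD^{\beta-1}}$ in Lemma~\ref{integrationbyparts}. The difference $\partial_x\overline{\DD^{\beta-1}}[u(t,\cdot)](x)-\partial_y\overline{\DD^{\beta-1}}[v(s,\cdot)](y)$ combines into a single $d_{\beta+1}$-weighted integral whose numerator is $[u(t,x+z)-v(s,y+z)]-[u(t,x)-v(s,y)]-[\partial_x u(t,x)-\partial_y v(s,y)] z$. Multiplication by $\sign(u(t,x)-v(s,y))$ bounds the first bracket in absolute value by $|u(t,x+z)-v(s,y+z)|$, collapses the second to $-|u(t,x)-v(s,y)|$, and, via the a.e.\ identities $\partial_x|u-v|=\sign(u-v)\partial_x u$ and $\partial_y|u-v|=-\sign(u-v)\partial_y v$, converts the third to $-(\partial_x+\partial_y)|u(t,x)-v(s,y)|\,z$. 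The resulting upper bound is exactly the integrand of $\overline{\DD}^\beta_{x,y}={}^0\overline{\DD}^\beta_{x,y}$ from (\ref{def:new:Dadjoint}), giving the stated inequality.

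The main obstacle is bookkeeping rather than analysis: one must verify that the constants from (\ref{rel:operators}) recombine correctly in (ii), and confirm that the chain-rule identities for the derivatives of $|u-v|$ used in (iii) — which only hold pointwise off the coincidence set $\{u=v\}$ — remain valid in the weak sense required for subsequent integration against non-negative test functions, precisely as in the proof of Theorem~\ref{L1:contraction}.
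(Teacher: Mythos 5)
Your proposal is correct and follows essentially the same route as the paper: (i) is the convexity argument applied to the integral representation (\ref{RieszFeller:int}), exactly as in (\ref{convex:multi}); (ii) combines (\ref{rel:operators}) with Lemma~\ref{integrationbyparts} (plus its mirror-image version, which the paper leaves implicit); and (iii) reproduces the estimate (\ref{L1:key:est}) on the half-line $(0,\infty)$. Your closing remark about the a.e.\ chain-rule identities for $|u-v|$ is handled in the paper in the same weak sense as in the proof of Theorem~\ref{L1:contraction}, so no gap remains.
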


\begin{proof}
  {\it (i)} is proved by using the integral representation and the convexity of $\eta$ (as for (\ref{convex:multi})).

  {\it (ii)} This follows from (\ref{rel:operators}) and Lemma~\ref{integrationbyparts}.

  {\it (iii)} This estimate follows as in (\ref{L1:key:est}). 
\end{proof}

With {\it (i)} and {\it (ii)} of Proposition~\ref{key:general}, the following generalisation of Theorem~\ref{entropy:smooth} holds, the proof being analogous:
\begin{theorem}\label{entropy:smooth:general} Given $\eta\in C(\R)$ convex and $u\in C^2_b((0,\infty)\times\R)$ a solution of (\ref{Problem:general}), then for all 
$\varphi \in C_c^\infty((0,\infty)\times \R)$
\begin{equation}\label{entropy:general}
  \int_{0}^{\infty}\int_{\R} 
\Big( 
\eta(u(t,x)) \partial_t\varphi(t,x) + q(u(t,x)) \partial_x \varphi(t,x) 
+ \eta(u(t,x)) \overline{D^\beta_\gamma}[\varphi(t,\cdot)](x)
\Big) \, dx\, dt\geq 0,
  \end{equation}
\end{theorem}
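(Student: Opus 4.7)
The plan is to follow the structure of the proof of Theorem~\ref{entropy:smooth} verbatim, replacing each use of Lemma~\ref{equiv:repre} and Lemma~\ref{integrationbyparts} by the corresponding items of Proposition~\ref{key:general}.

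First I would assume $\eta\in C^2(\R)$ is convex, so that the pointwise chain-rule inequality
\[
D^\beta_\gamma[\eta(u(t,\cdot))](x) \geq \eta'(u(t,x))\, D^\beta_\gamma[u(t,\cdot)](x)
\]
holds by Proposition~\ref{key:general}(i); this is the direct analogue of (\ref{convex:multi}). Multiplying the equation in (\ref{Problem:general}) by $\eta'(u)$ and using that $q'=\eta'f'$ yields the pointwise entropy inequality
\[
\partial_t \eta(u(t,x)) + \partial_x q(u(t,x)) \leq D^\beta_\gamma[\eta(u(t,\cdot))](x),
\]
valid on $(0,\infty)\times\R$ since $u\in C^2_b$ ensures all terms are well defined (cf.\ the estimate in Proposition~\ref{repre:general}). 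Testing against a non-negative $\varphi\in C_c^\infty((0,\infty)\times\R)$ and integrating by parts in $t$ and $x$ in the left-hand side gives
\[
\int_0^\infty\!\!\int_\R \Big(\eta(u)\partial_t\varphi + q(u)\partial_x\varphi + D^\beta_\gamma[\eta(u(t,\cdot))](x)\,\varphi(t,x)\Big)\,dx\,dt\geq 0.
\]
Now I would apply Proposition~\ref{key:general}(ii), i.e.\ the ``integration by parts'' formula for $D^\beta_\gamma$, to shift the nonlocal operator onto the test function, replacing the last integrand by $\eta(u(t,x))\,\overline{D^\beta_\gamma}[\varphi(t,\cdot)](x)$; this produces (\ref{entropy:general}) for smooth convex entropies.

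To extend to $\eta\in C(\R)$ convex, I would mollify: take $\eta_n=\omega_n\ast\eta\in C^2(\R)$ with $\omega_n$ a standard mollifier, so the $\eta_n$ are convex and converge to $\eta$ locally uniformly. The associated entropy fluxes $q_n(u)=f'(u)\eta_n(u)-f'(0)\eta_n(0)-\int_0^u f''(z)\eta_n(z)\,dz$ converge locally uniformly to (\ref{entropy:flux}) by dominated convergence, exactly as in Theorem~\ref{entropy:smooth}. Since $u$ is bounded and $\varphi$ has compact support, passage to the limit in the three integrals of the smooth entropy inequality is immediate, yielding (\ref{entropy:general}).

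The only potentially delicate point is verifying that the boundary terms in the integration by parts involving $\overline{D^\beta_\gamma}$ vanish under the decomposition (\ref{rel:operators}); this however reduces to the analysis already carried out for $\partial_x\DD^\alpha$ and $\partial_x\overline{\DD^\alpha}$ in Lemma~\ref{integrationbyparts}, since the two kernels appearing in $\overline{D^\beta_\gamma}$ are of exactly the same type with $\alpha=\beta-1\in(0,1)$. Hence all steps transfer with purely notational modifications, and no new estimate is required beyond those provided by Proposition~\ref{key:general} and Proposition~\ref{repre:general}.
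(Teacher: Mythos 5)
Your proposal is correct and follows exactly the route the paper intends: the paper proves this theorem by declaring it "analogous" to Theorem~\ref{entropy:smooth}, using precisely items \emph{(i)} and \emph{(ii)} of Proposition~\ref{key:general} for the convexity inequality and the integration-by-parts step, and the same mollification argument for merely continuous convex entropies. Your additional remark about the vanishing boundary terms reducing to the case $\alpha=\beta-1$ via (\ref{rel:operators}) is consistent with what the paper leaves implicit.
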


Theorem~\ref{L1:contraction} holds unchanged for (\ref{Problem:general}). In order to prove this, we follow the same steps, the main difference is that we have one more term in the nonlocal operator, which can be written as (\ref{rel:operators}). This is no substantial difference, since the crucial estimate, that allows to show the pertinent entropy inequality, is (\ref{L1:key:est}). But with this same one and (iii) of Proposition~\ref{key:general} above, we obtain
\[
\sign(u(t,x)-v(s,y))\,
\left(D^{\beta}_\gamma[u(t,\cdot)](x)-D^{\beta}_\gamma[v(s,\cdot)](y)\right) \leq
    D^{\beta}_{\gamma,x,y}[|u(t,\cdot)-v(s,\cdot)|](x,y),
\]
where the operator $D^{\beta}_{\gamma,x,y}$ is defined by means of
\[
D^{\beta}_{\gamma,x,y}[g](x,y)=\frac{1}{d_{\beta+1}} \left(c_\gamma^1\DD^{\beta}_{x,y}[v](x,y)
+ c_\gamma^2 \overline{\DD}^{\beta}_{x,y}[g](x,y)\right).
\]
This gives, with the same choice of test functions, the entropy inequality (\ref{3.7}) where the non-local operator in the last term of the integrand is replaced by
\[
\frac{1}{d_{\beta+1}}\left(c_\gamma^1\partial_{z'} \overline{\DD^{\beta-1}}[\varphi(r',\cdot)](z')
+ c_\gamma^2 \partial_{z'} \DD^{\beta-1}[\varphi(r',\cdot)](z')\right).
\]
The rest of the proof follows similarly, the treatment of the terms coming from this being analogous.

Finally, the zero viscosity limit results follow similarly with the aid of Theorem~\ref{entropy:smooth:general} above. The treatment of the nonlocal term being analogous, once the nonlocal operators are written using (\ref{rel:operators}) and {\it (ii)} of Proposition~\ref{key:general}. In this way, Theorem~\ref{vanishingregularization} holds unchanged for (\ref{Problem:general}).

{\bf Acknowledgements:} The authors greatly thank Franz Achleitner for helpful comments and a thorough review of the first version of the manuscript. The authors also acknowledge the financial support of the Spanish Government through the MICINNU project PGC2018-094522-B-I00, and of the Basque Government through the Research Group grant IT1247-19. Xuban Diez-Izagirre also acknowledges the support of the Basque Government through the doctoral grant PRE-2018-2-0013.

\bibliographystyle{plain}

\end{document}